\newtheorem{thm}[equation]{Theorem}
 \newtheorem{prop}[equation]{Proposition}
 \newtheorem{lem}[equation]{Lemma}
 \newtheorem{cor}[equation]{Corollary}
 \theoremstyle{definition}
 \newtheorem{example}[equation]{Example}
 \newtheorem{defn}[equation]{Definition}
 \newtheorem{remark}[equation]{Remark}
\numberwithin{equation}{section}
\newcommand{\bbZ}{{\mathbb{Z}}}
\newcommand{\bbR}{{\mathbb{R}}}
\newcommand{\bbP}{{\mathbb{P}}}
\newcommand{\bbQ}{{\mathbb{Q}}}
\newcommand{\bbH}{{\mathbb{H}}}
\newcommand{\Mat}{\operatorname{M}}
\newcommand{\GL}{\mathrm{GL}}
\newcommand{\Rep}{\mathbf{Rep}}
\newcommand{\V}{\mathbf{V}}
\newcommand{\rank}{\operatorname{rank}}
\newcommand{\ed}{\operatorname{ed}}
\newcommand{\trdeg}{\operatorname{trdeg}}
\newcommand{\Gal}{\operatorname{Gal}}
\newcommand{\Sub}{\mathbf{Sub}}
\newcommand{\Env}{\operatorname{Env}}
 \newcommand{\ind}{\operatorname{ind}}
 \newcommand{\tr}{\operatorname{tr}}
 \newcommand{\M}{\operatorname{M}}
\newcommand{\SB}{\operatorname{S\!B}\nolimits}
\newcommand{\Br}{\mathop{\mathrm{Br}}}
\newcommand{\cd}{\mathop{\mathrm{cd}}\nolimits}
\newcommand{\Char}{\mathop{\mathrm{char}}\nolimits}
\newcommand{\Z}{\mathbb{Z}}
\newcommand{\Spec}{\operatorname{Spec}}
\newcommand{\End}{\operatorname{End}}
\newcommand{\Prod}{\operatornamewithlimits{\textstyle\prod}}
\newcommand{\Fields}{\mathbf{Fields}}
\newcommand{\Sets}{\mathbf{Sets}}
\renewcommand{\phi}{\varphi}
\newcommand{\WR}{R}
\author
{Nikita A. Karpenko}
\address
{Mathematical \& Statistical Sciences \\
University of Alberta \\
Edmonton
\\
CANADA}
\email
{karpenko {\it at} ualberta.ca, {\it web page}: www.ualberta.ca/\~{ }karpenko}
\thanks
{The first author acknowledges partial support of the French Agence Nationale
de la Recherche (ANR) under reference ANR-12-BL01-0005;
his work has been also partially supported by a start-up grant of
the University of Alberta and a Discovery Grant from
the National Science and Engineering Board of Canada.}
\author{Zinovy Reichstein}
\address
{Department of Mathematics \\
University of British Columbia \\
Vancouver
\\
CANADA}
\email
{reichst {\it at} math.ubc.ca, {\it web page}: www.math.ubc.ca/\~{ }reichst}
\thanks
{The second author has been partially supported by a Discovery Grant from
the National Science and Engineering Board of Canada}
\date
{June 17, 2014}
\begin{document}

\title
[An invariant for representations of finite groups]
{A numerical invariant\\
 for linear representations\\
  of finite groups}

\keywords
{Representations of finite groups, characters,
Schur index, central simple algebras, essential dimension,
Severi-Brauer varieties, Weil transfer,
Chow groups and motives, canonical dimension and incompressibility}
\subjclass[2010]{14C25, 16K50, 20C05}

\begin{abstract}
We study the notion of essential dimension for a linear
representation of a finite group.  In characteristic zero
we relate it to the canonical dimension
of certain products of Weil transfers of generalized
Severi-Brauer varieties. We then proceed to compute the canonical
dimension of a broad class of varieties of this type, extending
earlier results of the first author. As a consequence,
we prove analogues of classical theorems of R.~Brauer and O.~Schilling
about the Schur index, where the Schur index of a representation
is replaced by its essential dimension.
In the last section we show that essential dimension
of representations can behave in rather unexpected ways in
the modular setting.
\end{abstract}

\maketitle

\section{Introduction}
Let $K/k$ be a field extension,
$G$ be a finite group of exponent $e$, and
$\rho \colon G \to \GL_n(K)$ be a non-modular representation
of $G$ whose character takes values in $k$. (Here ``non-modular''
means that $\Char(k)$ does not divide $|G|$.)
A theorem of Brauer says that if $k$ contains a primitive
$e$th root of unity $\zeta_e$ then $\rho$ is defined over $k$, i.e., $\rho$
is $K$-equivalent to a representation
$\rho' \colon G \to \operatorname{GL}_n(k)$; see,
e.g.~\cite[\S12.3]{serre-representations}.
If $\zeta_e \not \in k$, we would like to know ``how far" $\rho$ is from
being defined over $k$.  In the case, where $\rho$ is absolutely
irreducible, a classical answer to this question is given by the Schur
index of $\rho$, which is the smallest degree of a finite
field extension $l/k$ such that $\rho$ is defined over $l$.
Some background material on the Schur index and further references
can be found in Section~\ref{sect.prel}.

In this paper we introduce and study another numerical invariant,
the essential dimension $\ed(\rho)$, which measures
``how far" $\rho$ is from being defined over $k$
in a different way. Here $\rho$ is not assumed to be irreducible;
for the definition of $\ed(\rho)$, see Section~\ref{sect.def-ed}.
In Section~\ref{sect.functor} we show that the maximal value of
$\ed(\rho)$, as $\rho$ ranges over representations with
a fixed character $\chi \colon G \to k$, which we denote by $\ed(\chi)$,
can be expressed as the canonical dimension of a certain product of
Weil transfers of generalized Severi-Brauer varieties.
We use this to show that
$\ed(\rho) \leq  |G|^2/4$
for any
$n$, $k$, and $K/k$ in Section~\ref{sect.upper-bounds}
and to prove a variant of a classical theorem of Brauer
in Section~\ref{sect.brauer-ed}.  In Section~\ref{sect.incompressible}
we compute the canonical dimension of a broad class of Weil
transfers of generalized Severi-Brauer varieties, extending earlier
results of the first author from~\cite{qweil}
and~\cite{upper}. This leads to a formula for the essential
$p$-dimension of an irreducible character in terms of its
decomposition into absolutely irreducible components;
see Corollary~\ref{cor.main2}.
As an application we prove a variant of a classical theorem
of Schilling in Section~\ref{sect.schilling-ed}. In the last
section we show that $\ed(\rho)$ can be unexpectedly
large in the non-modular setting.

\section{Notation and representation-theoretic preliminaries}
\label{sect.prel}

Throughout this paper $G$ will denote a finite group of exponent $e$,
$k$ a field, $\overline{k}$ an algebraic closure of $k$,
$K$ and $F$ field extensions of $k$, $\zeta_d$ a primitive $d$th
root of unity, $\rho$ a finite-dimensional
representation of $G$, and  $\chi$ a character of $G$.
In this section we will assume that $\Char(k)$ does not divide $G$.

\subsection{Characters and character values}
\label{sect.char}

A function $\chi \colon G \to k$ is
said to be {\em a character} of $G$, if $\chi$ is the character of
some representation $\rho \colon G \to \GL_n(K)$ for some
field extension $K/k$.

If $\chi \colon G \to \overline{k} $ is a character, and $F/k$ is a field,
we set
\[ F(\chi) := F(\chi (g)) \, | \, g \in G )  \subset F(\zeta_e). \]
Since $F(\zeta_e)$ is an abelian extension of $F$, so is
$F(\chi)$. Moreover, $F(\chi)$ is stable under automorphisms
$F(\zeta_e)/F$, i.e., independent of the choice of
the $e$th root of unity $\zeta_e$ in $\overline{F}$.

\begin{lem} \label{lem.abelian}
(a) Let $\chi, \chi' \colon G \to \overline{k}$ be characters and
$F/k$ be a field extension. Then

\smallskip
(a) every automorphism $h \in \Gal(F(\chi)/F)$
leaves $k(\chi)$ invariant.

\smallskip
(b) If $\chi$ and $\chi'$
are conjugate over $F$ then they are conjugate over $k$.

\smallskip
(c) Suppose $k$ is algebraically closed in $F$. Then the converse
to part (b) also holds. That is, if $\chi, \chi'$ are conjugate over $k$ then
they are conjugate over $F$.
\end{lem}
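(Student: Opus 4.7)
The plan is to reduce all three assertions to the observation that any character of $G$ takes values in the cyclotomic field $k(\zeta_e)$, and that both $\Gal(k(\zeta_e)/k)$ and $\Gal(F(\zeta_e)/F)$ embed naturally as subgroups of $(\Z/e\Z)^{\times}$ via the power-map action $\zeta_e\mapsto \zeta_e^{i}$, with restriction corresponding to the inclusion of these two subgroups. Under this identification the action on a character is $\chi(g)\mapsto \chi(g^i)$, which is again a value of $\chi$. I would first verify this compatibility, and then read off (a) and (b) directly: for (a), any $h\in\Gal(F(\chi)/F)$ extends to $\Gal(F(\zeta_e)/F)$ and acts by some $i$, whence $h(\chi(g))=\chi(g^i)\in k(\chi)$, so $h$ preserves $k(\chi)$; for (b), an $F$-conjugation sending $\chi$ to $\chi'$ is encoded by an integer $i$ lying in $\Gal(F(\zeta_e)/F)\subseteq\Gal(k(\zeta_e)/k)$, and the same $i$, viewed as a $k$-automorphism of $k(\zeta_e)$, still carries $\chi$ to $\chi'$.

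Part (c) requires the reverse inclusion of Galois groups, and this is where the hypothesis on $k$ is used. My plan is to show that $F\cap k(\zeta_e)=k$: the intersection is algebraic over $k$ and lies in $F$, hence lies in $k$ since $k$ is algebraically closed in $F$. Combined with the standard compositum identity $[F(\zeta_e):F]=[k(\zeta_e):F\cap k(\zeta_e)]$ for Galois subextensions, this yields $|\Gal(F(\zeta_e)/F)|=|\Gal(k(\zeta_e)/k)|$, so the injective restriction between these two groups is in fact an isomorphism. Consequently any power map realizing a $k$-conjugation $\chi\mapsto \chi'$ is already realized in $\Gal(F(\zeta_e)/F)$, giving the $F$-conjugation claimed in (c).

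The main obstacle is precisely the intersection identity $F\cap k(\zeta_e)=k$ in part (c); everything else is essentially bookkeeping with the $(\Z/e\Z)^{\times}$-picture and the observation that the power-map actions of $\Gal(\overline{k}/k)$ and $\Gal(\overline{F}/F)$ are governed by a common integer $i\bmod e$.
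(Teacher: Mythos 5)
Your proposal is correct and takes essentially the same route as the paper: lift an automorphism to $\Gal(F(\zeta_e)/F)$, note that it acts by $\chi(g)\mapsto\chi(g^j)$ and hence preserves $k(\chi)$, giving (a) and (b). For (c) you derive $F\cap k(\zeta_e)=k$ from the hypothesis and invoke the natural-irrationalities isomorphism at the cyclotomic level, which is the same key input the paper uses (citing Lang) for the surjectivity of $\Gal(F(\chi)/F)\to\Gal(k(\chi)/k)$.
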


\begin{proof}
(a) It is enough to show that $h(\chi(g)) \subset F(\chi)$
for every $g \in G$. 
Since the sequence of Galois groups
\[ 1 \to \Gal(F(\zeta_e)/F(\chi)) \to \Gal(F(\zeta_e)/F) \to
\Gal(F(\chi)/F) \to 1 \]
is exact, $h$ can be lifted to an element of
$\Gal(F(\zeta_e)/F)$. By abuse of notation,
we will continue to denote it by $h$.
The eigenvalues of $\rho(g)$ are of the form
$\zeta_e^{i_1}, \dots, \zeta_e^{i_n}$ for some integers
$i_1, \dots, i_n$. The automorphism $h$ sends $\zeta_e$ to
another primitive $e$th root of unity $\zeta_e^j$ for some
integer $j$. Then
\[ h(\chi(g)) = h(\zeta_e^{i_1} + \dots + \zeta_e^{i_n}) =
\zeta_e^{ji_1} + \dots + \zeta_e^{ji_n} = \chi(g^j) \in F(\chi) \, , \]
as desired.

\smallskip
(b) is an immediate consequence of (a).

\smallskip
(c) If $k$ is algebraically closed in $F$, then
the homomorphism
$$
\Gal(F(\chi_1)/F) \to \Gal(k(\chi_1)/k)
$$
given by $\sigma \mapsto \sigma_{\, | k(\chi)}$ is surjective;
see~\cite[Theorem VI.1.12]{lang}.
\end{proof}

\subsection
{The envelope of a representation}
\label{sect.env}

If $\rho \colon G \to \GL_n(F)$ is a representation over some field $F/k$,
we define the $k$-{\em envelope} $\Env_k(\rho)$ as the $k$-linear span
of $\rho(G)$.

%
%

\begin{lem} \label{lem.multiple}
$\Env_k(s \cdot \rho)$ is $k$-isomorphic $\Env_k(\rho)$
for any integer $s \geqslant 1$.
\end{lem}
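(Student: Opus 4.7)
The plan is to produce an explicit $k$-algebra isomorphism via the diagonal embedding. First I would note that although the envelope is defined as a $k$-linear span, it is automatically a unital $k$-subalgebra of $\Mat_n(F)$: the set $\rho(G)$ is closed under multiplication and contains $\rho(1_G) = I_n$, so its $k$-linear span inherits these properties. The same remark applies to $\Env_k(s\cdot\rho) \subset \Mat_{sn}(F)$.

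Next I would consider the $k$-linear map
\[
\phi \colon \Mat_n(F) \longrightarrow \Mat_{sn}(F), \qquad A \longmapsto \diag(A,A,\dots,A)
\]
(with $s$ diagonal blocks). This $\phi$ is a unital $k$-algebra homomorphism, since $\diag(A,\dots,A)\cdot\diag(B,\dots,B) = \diag(AB,\dots,AB)$ and $\phi(I_n)=I_{sn}$, and it is obviously injective. By definition of the representation $s\cdot\rho$, one has $\phi(\rho(g)) = (s\cdot\rho)(g)$ for every $g \in G$.

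It remains to verify that $\phi$ carries $\Env_k(\rho)$ onto $\Env_k(s\cdot\rho)$. The inclusion $\phi(\Env_k(\rho)) \subseteq \Env_k(s\cdot\rho)$ follows from $k$-linearity together with the identity $\phi(\rho(g)) = (s\cdot\rho)(g)$. For the reverse inclusion, the elements $(s\cdot\rho)(g)$ span $\Env_k(s\cdot\rho)$ over $k$ and all lie in $\phi(\Env_k(\rho))$, so the restriction of $\phi$ is surjective. Combining this with injectivity and the fact that $\phi$ respects the algebra structure yields the desired $k$-isomorphism $\Env_k(\rho) \cong \Env_k(s\cdot\rho)$.

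There is no real obstacle here: the entire statement reduces to recognizing the standard block-diagonal embedding and checking three routine properties (unitality, multiplicativity, bijectivity onto the target envelope). If anything warrants care, it is the observation at the outset that the envelope is a subalgebra and not merely a subspace, which is what makes ``$k$-isomorphic'' meaningful as a statement about $k$-algebras.
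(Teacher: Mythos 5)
Your proof is correct and follows exactly the paper's argument: the paper also invokes the diagonal embedding $\M_n(F) \hookrightarrow \M_n(F) \times \dots \times \M_n(F)$ ($s$ times) and notes that it induces the desired isomorphism. You simply spell out the routine verifications (unitality, multiplicativity, injectivity, and that the image of the span of $\rho(G)$ is the span of $(s\cdot\rho)(G)$) that the paper leaves implicit.
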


\begin{proof}
The diagonal embedding
$\M_n(F) \hookrightarrow \M_n(F) \times \dots \times \M_n(F)$
($s$ times) induces an isomorphism between $\Env_k(\rho)$
and $\Env_k(s \cdot \rho)$.
\end{proof}

\begin{lem} \label{lem.env}
Assume the character $\chi$ of $\rho \colon G \to \GL_n(F)$
is $k$-valued.  Then the natural homomorphism
$\Env_k(\rho) \otimes_k F \to \Env_F(\rho)$
is an isomorphism of $F$-algebras.
\end{lem}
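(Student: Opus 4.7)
The plan is to prove surjectivity of $\phi\colon \Env_k(\rho)\otimes_k F\to \Env_F(\rho)$ immediately and then verify injectivity by matching $F$-dimensions on both sides. Surjectivity is clear: since $\Env_F(\rho)$ is by definition the $F$-span of $\rho(G)\subseteq\Env_k(\rho)\subseteq\M_n(F)$, we have $\Env_F(\rho)=F\cdot\Env_k(\rho)=\im(\phi)$.

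For the dimension comparison, I would invoke Wedderburn theory, which is available by Maschke's theorem since $\Char k\nmid|G|$. Fix compatible algebraic closures $\overline k\subseteq\overline F$ and, for each absolutely irreducible character $\psi$ of $G$, let $\iota_\psi=\frac{\psi(1)}{|G|}\sum_{g\in G}\psi(g^{-1})g\in\overline F[G]$ be the corresponding primitive central idempotent. Let $P$ be the set of $\psi$ appearing in $\chi$. The primitive central idempotents of $k[G]$ are $\iota_i:=\sum_{\psi\in O_i^k}\iota_\psi$, indexed by the $\Gal(\overline k/k)$-orbits $O_i^k$ on absolutely irreducibles, and those of $F[G]$ are $\iota_j':=\sum_{\psi\in O_j^F}\iota_\psi$ indexed by $\Gal(\overline F/F)$-orbits. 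By Artin--Wedderburn (applied after base change to $\overline F$) the simple factors $A_i:=\iota_i k[G]$ and $B_j:=\iota_j'F[G]$ satisfy $\dim_k A_i=\sum_{\psi\in O_i^k}\psi(1)^2$ and $\dim_F B_j=\sum_{\psi\in O_j^F}\psi(1)^2$. Since $\rho$ is $F$-defined, $P$ is a union of $\Gal(\overline F/F)$-orbits, so $\Env_F(\rho)=\bigoplus_{O_j^F\subseteq P}B_j$ and $\dim_F\Env_F(\rho)=\sum_{\psi\in P}\psi(1)^2$. For $\Env_k(\rho)=k[G]/J$ with $J=k[G]\cap\ker(F[G]\to\Env_F(\rho))$, a direct computation in $\overline F[G]$ gives $\iota_i\iota_j'=\sum_{\psi\in O_i^k\cap O_j^F}\iota_\psi$; since the products $\iota_i\iota_j'$ (for fixed $i$) are pairwise orthogonal idempotents, $\iota_i\in J$ iff $O_i^k\cap O_j^F=\emptyset$ for every $O_j^F\subseteq P$, i.e., iff $O_i^k\cap P=\emptyset$. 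At this point the hypothesis that $\chi$ is $k$-valued enters: it forces $P$ to be $\Gal(\overline k/k)$-stable, so $O_i^k\cap P$ is either empty or all of $O_i^k$, and $\iota_i\notin J$ iff $O_i^k\subseteq P$. Therefore $\Env_k(\rho)=\prod_{O_i^k\subseteq P}A_i$ with $\dim_k\Env_k(\rho)=\sum_{\psi\in P}\psi(1)^2=\dim_F\Env_F(\rho)$, giving injectivity.

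The main obstacle is identifying which simple factors of $k[G]$ are killed by the representation $V=F^n$, because $V$ is typically infinite-dimensional over $k$ and so does not admit the usual Jordan--H\"older decomposition as a $k[G]$-module. The resolution is to carry out the analysis at the level of primitive central idempotents in $\overline F[G]$, where it reduces to the combinatorial condition $O_i^k\cap P=\emptyset$; this is precisely the step that uses the $k$-valuedness of $\chi$.
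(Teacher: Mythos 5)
Your proof is correct, but it takes a genuinely different route from the paper's. The paper argues directly at the level of linear relations: given an $F$-relation $\sum a_i \rho(g_i)=0$, multiplying by $\rho(g)$ and taking traces yields the homogeneous system $\sum x_i\chi(g_ig)=0$ (indexed by $g\in G$), whose coefficients lie in $k$ precisely because $\chi$ is $k$-valued; solving over $k$ and invoking non-degeneracy of the trace form on the semisimple algebra $\Env_k(\rho)$ produces a $k$-relation among the $\rho(g_i)$. This shows a $k$-basis of $\Env_k(\rho)$ drawn from $\rho(G)$ stays $F$-independent, which gives injectivity directly. You instead compute both sides via the Wedderburn decomposition of $k[G]$ and $F[G]$, tracking which primitive central idempotents survive in each envelope and using $k$-valuedness of $\chi$ to force the support $P$ to be $\Gal(\overline k/k)$-stable, so that the surviving $k$-factors and $F$-factors both account for $\sum_{\psi\in P}\psi(1)^2$. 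Both arguments exploit semisimplicity and $k$-valuedness in an essential way; the paper's is shorter and more elementary, while yours is more structural and in effect re-derives the explicit Wedderburn decomposition of $\Env_k(\rho)$ over $k$ (closely related to Theorem~\ref{thm.schur}(d)). Your remark that $V=F^n$ is typically infinite-dimensional over $k$, making a naive composition-series argument unavailable, is a real subtlety, and working at the level of central idempotents in $\overline F[G]$ is a legitimate and clean way around it.
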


\begin{proof}
It suffices to show that if $\rho(g_1), \dots, \rho(g_r)$
are linearly dependent over $F$ for some elements $g_1, \dots, g_r \in G$, then
they are linearly dependent over $k$. Indeed, suppose
\begin{equation*}
a_1 \rho(g_1) + \dots + a_r \rho(g_r) = 0
\end{equation*}
in $\M_n(F)$ for some
$a_1, \dots, a_r \in F$, such that $a_i \ne 0$ for some $i$.
Then
$$
\tr  ((a_1 \rho(g_1) + \dots + a_r \rho(g_r)) \cdot \rho(g)) = 0
$$
for every $g \in G$, which simplifies to
\begin{equation*}
a_1 \chi(g_1 g) + \dots + a_r \chi(g_r g) = 0.
\end{equation*}
The homogeneous linear system
\begin{equation*}
x_1 \chi(g_1 g) + \dots + x_r \chi(g_r g) = 0
\end{equation*}
in variables $x_1,\dots,x_r$ has coefficients in $k$
and a non-trivial solution in $F$.
Hence, it has a non-trivial solution $b_1,\dots,b_r$ in $k$,
and we get that
\begin{equation*}
\tr  ((b_1 \rho(g_1) + \dots + b_r \rho(g_r)) \cdot \rho(g)) = 0
\end{equation*}
for every $g \in G$.

Note that $\Env_k(\rho)$ is, by definition, a homomorphic image of
the group ring $k[G]$. Hence,  $\Env_k(\rho)$ is semisimple and
consequently, the trace form in $\Env_k(\rho)$ is non-degenerate.
It follows that the elements $\rho(g_1), \dots, \rho(g_r)$ are linearly dependent
over $k$, as desired.
\end{proof}

\subsection{The Schur index}
\label{sect.schur}

Suppose $K/k$ is a field extension, and
$\rho_1 \colon G \to \GL_n(K)$ is an absolutely
irreducible representation with character $\chi_1 \colon G \to K$.
By taking $F = \overline{K}$ in Lemma~\ref{lem.env},
one easily deduces that $\Env_{k(\chi_1)}(\rho_1)$ is
a central simple algebra of degree $n$ over $k(\chi_1)$.
The index of this algebra is called {\em the Schur index}
of $\rho_1$.  We will denote it by $m_{k}(\rho_1)$.

In the sequel we will need the following properties of the Schur index.

\begin{thm} \label{thm.schur} Let $K$ be a field, $G$ be a finite group
such that $\Char(K)$ does not divide $|G|$, and
$\rho \colon G \to \GL_n(K)$ be an irreducible representation.
Denote the character of $\rho$ by $\chi$.

\smallskip
(a) Over the algebraic closure $\overline{K}$, $\rho$ decomposes as
\begin{equation} \label{e.decomp}
\rho_{\overline{K}} \simeq m (\rho_1 \oplus \dots \oplus \rho_r),
\end{equation}
where $\rho_1, \dots, \rho_r$ are pairwise non-isomorphic irreducible
representations of $G$ defined over $\overline{K}$, and $m$ is their
common Schur index $m_K(\rho_1) = \dots = m_K(\rho_r)$.

\smallskip
(b) Let $\chi_i \colon G \to \overline{K}$ be the character
of $\rho_i$ for $i = 1, \dots, r$. Then
$K(\chi_1) = \dots = K(\chi_r)$ is an abelian extension
of $K$ of degree $r$.  Moreover, $\Gal(K(\chi_1)/K)$
transitively permutes $\chi_1, \dots, \chi_r$.

\smallskip
(c) Conversely, every irreducible representation
$\rho_1 \colon G \to \GL_1(\overline{K})$ occurs as an irreducible
component of a unique $K$-irreducible representation
$\rho \colon G \to \GL_n(K)$, as in~\eqref{e.decomp}.

\smallskip
(d) The center $Z$ of $\Env_K(\rho)$ is $K$-isomorphic to
$K(\chi_1) = K(\chi_2) = \dots = K(\chi_r)$.
$\Env_K(\rho)$ is a central simple algebra over $Z$ of index $m$.

\smallskip
(e) The multiplicity of $\rho_1$ in any representation
of $G$ defined over $K$ is a multiple of $m_K(\rho_1)$.
Consequently, $m_K(\rho_1)$ divides $m_k(\rho_1)$ for any
field extension $K/k$.

\smallskip
(f) $m$ divides $\dim(\rho_1) = \dots = \dim(\rho_r)$.
\end{thm}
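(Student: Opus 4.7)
The plan is to analyze the enveloping algebra $A := \Env_K(\rho)$. By Lemma~\ref{lem.env}, $A$ is a homomorphic image of the semisimple group algebra $K[G]$, so $A$ is semisimple; since $\rho$ is irreducible, $K^n$ is a simple $A$-module, which forces $A$ itself to be simple. Write $A \cong \M_s(D)$ with $D$ a central division algebra over its center $Z$, and set $m := \ind D = \deg D$ and $r := [Z:K]$. Semisimplicity of $\overline K[G]$ implies $Z/K$ is separable, hence
$$A \otimes_K \overline K \;\cong\; \M_s(D \otimes_K \overline K) \;\cong\; \prod_{i=1}^r \M_{sm}(\overline K),$$
where the $i$-th factor corresponds to an embedding $\sigma_i \colon Z \hookrightarrow \overline K$.

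For (a) and (f), I decompose the simple $A$-module $V = K^n \cong D^s$ after base change: $V \otimes_K \overline K \cong \prod_i \M_m(\overline K)^s$, and restricted to the $i$-th factor $\M_s(\M_m(\overline K)) = \M_{sm}(\overline K)$ this consists of $m$ copies of the unique simple module of dimension $sm$. Hence $\rho_{\overline K} \cong m(\rho_1 \oplus \cdots \oplus \rho_r)$ with $\dim\rho_i = sm$, proving (f). To identify the Schur index, apply the same Wedderburn analysis over $K(\chi_i)$: by Lemmas~\ref{lem.multiple} and~\ref{lem.env}, $\Env_{K(\chi_i)}(\rho_i) = \Env_{K(\chi_i)}(m\rho_i)$ appears as a simple factor of $\Env_K(\rho)\otimes_K K(\chi_i)$, which is central simple of index $m$ over $K(\chi_i)$. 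Thus $m_K(\rho_i)=m$ for every $i$, completing (a).

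For (b) and (d), I would track a class sum $S_g := \sum_{g'\sim g}\rho(g') \in Z$. A short calculation shows that $\sigma_i(S_g)$ is the scalar by which $S_g$ acts on $\rho_i$, and this scalar is expressible in terms of $\chi_i$; conversely the values $\chi_i(g)$ can be recovered from $\sigma_i(Z)$. Since the class sums $K$-span $Z$, we obtain $\sigma_i(Z) = K(\chi_i) \subseteq \overline K$, so $[K(\chi_i):K] = r$. Because $K(\chi_i) \subseteq K(\zeta_e)$ lies in an abelian extension of $K$, every $\Gal(\overline K/K)$-conjugate of $K(\chi_1)$ coincides with $K(\chi_1)$; hence $K(\chi_1) = \cdots = K(\chi_r)$, and $\Gal(K(\chi_1)/K)$ transitively permutes the $\sigma_i$ and the $\chi_i$. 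This establishes (b), and (d) is immediate from the construction.

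For (c), the simple two-sided factors of $\overline K[G]$ fall into $\Gal(\overline K/K)$-orbits, and these orbits are in bijection with simple factors of $K[G]$ (equivalently, with $K$-irreducible representations of $G$), with a given orbit $\{\rho_1,\ldots,\rho_r\}$ corresponding to the $K$-irreducible whose base change decomposes as in~\eqref{e.decomp}. Finally, for (e), decompose an arbitrary $K$-representation $\tau$ into $K$-irreducibles and apply~\eqref{e.decomp} to each summand to see that the multiplicity of an absolutely irreducible $\rho_1$ in $\tau_{\overline K}$ is a multiple of $m_K(\rho_1)$; taking $\tau$ to be the $k$-irreducible containing $\rho_1$ and base-changing to $K$ yields $m_K(\rho_1) \mid m_k(\rho_1)$. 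The main technical work is the Wedderburn-theoretic structural analysis in the first two paragraphs; once the double description of $A \otimes_K \overline K$ and the matching decomposition of $V \otimes_K \overline K$ are in place, all remaining assertions follow by routine manipulation.
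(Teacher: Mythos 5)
Your proposal is correct in outline, but it takes a genuinely different route from the paper for the simple reason that the paper does not prove Theorem~\ref{thm.schur} at all: it treats the statement as background and cites \cite[Theorem 74.5]{curtis-reiner2} for parts (a)--(d) and \cite[Corollary 74.8]{curtis-reiner1} for parts (e)--(f). What you have written is essentially the standard Wedderburn-theoretic argument behind those references, specialized to the enveloping algebra $A=\Env_K(\rho)$: simplicity of $A$ from faithfulness of the simple module $K^n$, the splitting $A\otimes_K\overline K\simeq\prod_{i=1}^r\M_{sm}(\overline K)$ over the $r$ embeddings of $Z$, the module count giving multiplicity $m$ and $\dim\rho_i=sm$, and the class-sum computation identifying $\sigma_i(Z)$ with $K(\chi_i)$. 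This is a reasonable self-contained substitute and meshes well with the paper's later reliance on $\Env_k(\rho)$ via Lemmas~\ref{lem.multiple} and~\ref{lem.env}. Two points to tighten. First, there is an ordering issue: in your paragraph on (a) and (f) you assert that the simple factor $\Env_{K(\chi_i)}(\rho_i)$ of $\Env_K(\rho)\otimes_K K(\chi_i)$ is \emph{central} over $K(\chi_i)$ of index $m$, which presupposes the identification $\sigma_i(Z)=K(\chi_i)$ that you only establish afterwards via class sums; since the class-sum argument is independent of the Schur-index computation, this is cured by doing (b) and (d) first, but as written the equality $m_K(\rho_i)=m$ is invoked before it is justified. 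Second, separability of $Z/K$ should be deduced from semisimplicity of $A\otimes_K\overline K\simeq\Env_{\overline K}(\rho)$ (a quotient of $\overline K[G]$, using Lemma~\ref{lem.env} with the tautologically $K$-valued character of $\rho$), since that is what makes $Z\otimes_K\overline K$ reduced; semisimplicity of $\overline K[G]$ alone is one step removed from this. With those adjustments the argument is complete and could serve as a proof of the theorem, whereas the paper deliberately outsources it.
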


\begin{proof}
See~\cite[Theorem 74.5]{curtis-reiner2} for parts (a)-(d),
and~\cite[Corollary 74.8]{curtis-reiner1} for parts (e) and (f).
\end{proof}

As a consequence of Theorem~\ref{thm.schur}, we obtain the following

\begin{cor} \label{cor.lift} Let $K/k$ be a field extension,
$\rho \colon G \to \GL_n(K)$ be a representation,
whose character takes values in $k$, and
 \[ \rho = d_1 \rho_1 \oplus \dots \oplus d_r \rho_r \]
be the irreducible decomposition of $\rho$ over the algebraic closure
$\overline{K}$.  Then $\rho$ can be realized over $k$
(i.e., $\rho$ is $K$-equivalent to
a representation $\rho' \colon G \to \GL_n(k)$) if and only if
the Schur index $m_k(\rho_i)$ divides $d_i$ for every $i = 1, \dots, r$.
\qed
\end{cor}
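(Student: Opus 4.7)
\medskip

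The plan is to prove both implications using Theorem~\ref{thm.schur}. The key input is that, because $\Char(k)$ does not divide $|G|$, a representation of $G$ over $K$ (or over $k$) is determined up to isomorphism by its character. So the proof reduces to matching characters, and all the real content is in the bookkeeping of multiplicities under Galois action.

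For the ``only if'' direction, suppose $\rho$ is $K$-equivalent to some $\rho' \colon G \to \GL_n(k)$. Decomposing $\rho'_{\overline{k}}$ into absolutely irreducible components and pulling back to $\overline{K}$ via a fixed embedding $\overline{k} \hookrightarrow \overline{K}$ recovers the given decomposition $\rho = d_1\rho_1 \oplus \dots \oplus d_r \rho_r$, with the same multiplicities $d_i$. Theorem~\ref{thm.schur}(e) applied to $\rho'$ (a representation defined over $k$) then yields $m_k(\rho_i) \mid d_i$ for every $i$.

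For the ``if'' direction, I would first observe that since the character $\chi = \sum_i d_i \chi_i$ takes values in $k$, and the absolutely irreducible characters $\chi_i$ are linearly independent, any $\sigma \in \Gal(\overline{k}/k)$ must permute the $\chi_i$ while preserving their multiplicities. Hence the $d_i$ are constant on each $\Gal(\overline{k}/k)$-orbit of the $\chi_i$. Group the indices into such orbits; by Theorem~\ref{thm.schur}(b),(c) each orbit $\{\rho_{i_1},\dots,\rho_{i_s}\}$ is the set of absolutely irreducible constituents of a unique $k$-irreducible representation $\tau \colon G \to \GL_N(k)$, with
\[
\tau_{\overline{k}} \;\simeq\; m\bigl(\rho_{i_1} \oplus \cdots \oplus \rho_{i_s}\bigr),
\qquad m = m_k(\rho_{i_1}) = \cdots = m_k(\rho_{i_s}).
\]
Let $d$ denote the common multiplicity on this orbit. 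By hypothesis $m \mid d$, so $(d/m)\tau$ is a representation defined over $k$ whose base change to $\overline{k}$ equals $d(\rho_{i_1}\oplus\cdots\oplus\rho_{i_s})$.

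Summing the resulting $k$-defined representations $(d/m)\tau$ over all orbits produces a representation $\rho' \colon G \to \GL_n(k)$ with $\rho'_{\overline{K}} \simeq \rho_{\overline{K}}$, hence with the same character as $\rho$ when viewed over $K$. Since characters determine representations in our setting, $\rho' \otimes_k K \simeq \rho$, so $\rho$ is realized over $k$. The only subtle point, and where I expect a careful reader to look twice, is the compatibility between the decomposition over $\overline{K}$ (as given in the statement) and the grouping dictated by the Galois action of $\Gal(\overline{k}/k)$: this is exactly where Lemma~\ref{lem.abelian} and the fact that characters take values in the cyclotomic subfield $\overline{k} \cap k(\zeta_e)$ enter, ensuring that $k$-conjugacy of characters can be read off inside $\overline{K}$.
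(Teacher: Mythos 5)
Your argument is correct and is exactly the derivation the paper intends: the corollary is stated with no proof as an immediate consequence of Theorem~\ref{thm.schur}, and your two directions use precisely parts (e) and (b),(c) of that theorem in the expected way. The only cosmetic remark is that in the ``if'' direction the conclusion $\rho'\otimes_k K\simeq\rho$ is most safely drawn from the isomorphism $\rho'_{\overline K}\simeq\rho_{\overline K}$ you already have (via Noether--Deuring descent for semisimple representations), rather than from ``characters determine representations,'' which in the non-modular positive-characteristic case only determines multiplicities modulo $\Char(k)$.
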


%

\section{Preliminaries on essential and canonical dimension}
\label{sect.ed}

\subsection{Essential dimension}
Let $ \mathcal{F}:\Fields_k\to\Sets $ be a covariant functor,
where $\Fields_k$ is the category of field
extensions of $k$ and $\Sets$ is the category of sets.
We think of the functor $\mathcal{F}$ as specifying the type
of algebraic objects under consideration, $\mathcal{F}(K)$
as the set of algebraic objects of this type defined over $K$,
and the morphism
$\mathcal{F}(i) \colon \mathcal{F}(K) \to \mathcal{F}(L)$
associated to a field extension
\begin{equation} \label{e.inclusion}
k \subset K \stackrel{i}{\hookrightarrow} L
\end{equation}
as ``base change".  For notational simplicity,
we will write $\alpha_L \in \mathcal{F}(L)$
instead of $\mathcal{F}(i)(\alpha)$,

Given a field extension $L/K$, as in~\eqref{e.inclusion},
an object $\alpha \in \mathcal{F}(L)$ is said to {\em descend} to $K$ if
it lies in the image of $\mathcal{F}(i)$.
The {\em essential dimension} $\ed(\alpha)$ is defined
as the minimal transcendence degree of $K/k$,
where $\alpha$ descends to $K$.  The essential
dimension $\ed(\mathcal{F})$ of the functor
$\mathcal{F}$ is the supremum of
$\ed(\alpha)$ taken over all $\alpha \in \mathcal{F}(K)$
and all $K$.

Usually $\ed(\alpha) < \infty$ for every $\alpha \in \mathcal{F}(K)$ and
every $K/k$;  see~\cite[Remark 2.7]{BRV2}.
On the other hand, $\ed(\mathcal{F}) = \infty$ in many
cases of interest; for example, see Proposition~\ref{prop.modular}
below.

The essential dimension
$\ed_p(\alpha)$ of $\alpha$ at a prime integer $p$ is defined
as the minimal value of $\ed(\alpha_{L'})$, as $L'$ ranges
over all finite field extensions $L'/L$ such that
$p$ does not divide the degree $[L':L]$.
The essential dimension $\ed_p(\mathcal{F})$ is then defined as
the
supremum
of
$\ed_p(\alpha)$
as $K$ ranges over
all field extensions of $k$ and
$\alpha$
ranges over $\mathcal{F}(K)$.

For generalities on essential dimension,
see~\cite{berhuy-favi, merkurjev-ed-2013, reichstein-icm, BRV2}.

\subsection{Canonical dimension}
\label{sect.cd}
An interesting example of a covariant functor $\Fields_k\to\Sets $
is the ``detection functor" $\mathcal{D}_X$ associated to
an algebraic $k$-variety $X$.
For a field extension $K/k$, we define
\[ \mathcal{D}_X(K) := \begin{cases} \text{a one-element set,
if $X$ has a $K$-point, and} \\
\text{$\emptyset$, otherwise.}
\end{cases} \]
If $k \subset K \stackrel{i}{\hookrightarrow} L$
then $0 \leqslant |\mathcal{D}_X(K)| \leqslant
|\mathcal{D}_X(L)| \leqslant 1$. Thus there is a unique morphism of sets
$ \mathcal{D}_X(K) \to \mathcal{D}_X(L)$,
which we define to be $\mathcal{D}_X(i)$.

The essential dimension (respectively, the essential $p$-dimension)
of the functor $\mathcal{D}_X$ is called the {\em canonical dimension}
of $X$ (respectively, {\em the canonical $p$-dimension} of $X$) and
is denoted by $\cd(X)$ (respectively, $\cd_p(X)$).
If $X$ is smooth and projective, then $\cd(X)$ (respectively, $\cd_p(X)$)
equals the minimal dimension of the image of a rational self-map
$X \dasharrow X$
(respectively, of a correspondence $X \rightsquigarrow X$
of degree prime to $p$).
In particular,
\begin{equation} \label{e.cd<dim}
0 \leqslant \cd_p(X) \leqslant \cd(X) \leqslant \dim(X)
\end{equation}
for any prime $p$.  If $\cd(X) = \dim(X)$, we say that $X$ is
incompressible. If $\cd_p(X) = \dim(X)$, we say that $X$
is $p$-incompressible.  For details on the notion of canonical
dimension for algebraic varieties, we refer
the reader to~\cite[\S4]{merkurjev-ed-2013}.

We will say that smooth projective varieties $X$ and $Y$ defined over $K$
are {\em equivalent} if there exist rational maps $X \dasharrow Y$ and
$Y \dasharrow X$. Similarly, we will say that $X$ and $Y$ are $p$-equivalent
for a prime integer $p$, if there exist correspondences
$X \rightsquigarrow Y$ and $Y \rightsquigarrow X$ of degree prime to $p$.

\begin{lem} \label{lem.Nishimura}
(a) If $X$ and $Y$ are equivalent, then $\cd(X) = \cd(Y)$ and $\cd_p(X) = \cd_p(Y)$
for any $p$.

\smallskip
(b) If $X$ and $Y$ are $p$-equivalent for some $p$, then $\cd_p(X) = \cd_p(Y)$.
\end{lem}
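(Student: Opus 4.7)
My plan is to use the identifications $\cd(X) = \ed(\mathcal{D}_X)$ and $\cd_p(X) = \ed_p(\mathcal{D}_X)$ from Section~\ref{sect.cd}, which reduces both parts of the lemma to statements about the detection functors $\mathcal{D}_X$ and $\mathcal{D}_Y$.

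For part (a), I would invoke Nishimura's lemma: a rational map from a regular variety to a proper variety over a field $F$ sends $F$-rational points to $F$-rational points. Applied after base change to an arbitrary field extension $F/k$, using that $X$ is smooth and $Y$ is proper, the given rational map $X \dashrightarrow Y$ shows $\mathcal{D}_X(F) \ne \emptyset \Rightarrow \mathcal{D}_Y(F) \ne \emptyset$; the opposite rational map gives the reverse implication. Since each $\mathcal{D}_X(F)$ and $\mathcal{D}_Y(F)$ has at most one element, the two functors are naturally isomorphic, and both $\cd(X) = \cd(Y)$ and $\cd_p(X) = \cd_p(Y)$ follow at once.

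For part (b), the analogue of Nishimura's lemma I plan to use is the standard fact that a correspondence $\alpha \in \CH_{\dim X}(X \times Y)$ of degree $d$ coprime to $p$ pulls back, along any $F$-point of the smooth variety $X$, to a zero-cycle of degree $d$ on $Y_F$; since $d$ is coprime to $p$, $Y_F$ then carries a closed point of degree prime to $p$, i.e.\ there is a finite extension $F'/F$ of degree prime to $p$ with $Y(F') \ne \emptyset$. I would then set $d := \cd_p(X)$, pick any $K/k$ with $Y(K) \ne \emptyset$, and chase through the following sequence: using the correspondence $Y \rightsquigarrow X$, find a prime-to-$p$ extension $K_1/K$ with $X(K_1) \ne \emptyset$; by definition of $\cd_p(X)$, find a prime-to-$p$ extension $L_1/K_1$ and a subfield $k \subset E_0 \subset L_1$ with $\trdeg_k E_0 \le d$ and $X(E_0) \ne \emptyset$; then, applying $X \rightsquigarrow Y$ over $E_0$, find a prime-to-$p$ extension $E_1/E_0$ with $Y(E_1) \ne \emptyset$.

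The main obstacle I anticipate is that $E_1$ is not a priori contained in a prime-to-$p$ extension of $K$. I intend to handle this by forming the compositum $L := E_1 \cdot L_1$ inside a common algebraic closure: since $[L:L_1]$ divides $[E_1:E_0]$ and both $[E_1:E_0]$ and $[L_1:K]$ are prime to $p$, the extension $L/K$ is also prime to $p$. As $E_1 \subset L$ and $\trdeg_k E_1 = \trdeg_k E_0 \le d$, this exhibits a subfield $E_1$ of a prime-to-$p$ extension of $K$ over which $Y$ acquires a point and whose transcendence degree is bounded by $\cd_p(X)$. Hence $\cd_p(Y) \le \cd_p(X)$, and swapping the roles of $X$ and $Y$ yields the desired equality.
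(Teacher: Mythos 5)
Your part (a) is exactly the paper's argument (Nishimura's lemma gives an isomorphism of detection functors), so that part is fine. For part (b) the paper simply cites \cite[Lemma 3.6 and Remark 3.7]{snv}, so your attempt at a self-contained proof is a genuinely different route; unfortunately it has a gap in the compositum step.

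The problem is the claim that $[L:L_1]$ divides $[E_1:E_0]$, where $L=E_1L_1$ is the compositum taken inside a fixed algebraic closure. For a non-Galois extension this divisibility is simply false, and worse, $[L:L_1]$ need not even be prime to $p$ when $[E_1:E_0]$ is. A concrete counterexample with $p=2$: take $E_0=\Q$, $E_1=\Q(\sqrt[3]{2})$ (so $[E_1:E_0]=3$, prime to $2$), and $L_1=\Q(\omega\sqrt[3]{2})$ where $\omega$ is a primitive cube root of unity. Then $E_1L_1=\Q(\sqrt[3]{2},\omega)$ and $[E_1L_1:L_1]=2$, which neither divides $3$ nor is prime to $p=2$. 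So the chain "$\text{prime-to-}p$ extensions compose under compositum over a common subfield" does not hold as stated.

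What is true, and what rescues the argument, is the following refinement. Since $E_1/E_0$ is finite separable (we are in characteristic $0$), $E_1\otimes_{E_0}L_1$ is a finite product of fields $F_1\times\dots\times F_m$ with $\sum_j[F_j:L_1]=[E_1:E_0]$. As the right-hand side is prime to $p$, some $[F_j:L_1]$ must be prime to $p$; taking $L:=F_j$ gives a prime-to-$p$ extension of $L_1$ (hence of $K$) that receives an $E_0$-embedding of $E_1$, and then $Y(E_1)\neq\emptyset$ together with $\trdeg_k E_1=\trdeg_k E_0\leqslant d$ yields $\cd_p(Y)\leqslant\cd_p(X)$. The point is that you must choose the \emph{right} factor of $E_1\otimes_{E_0}L_1$, not ``the'' compositum inside a fixed algebraic closure. (An alternative, cleaner fix is to first pass to a $p$-special closure $K^{(p)}$ of $K$, so that all finite extensions have $p$-power degree and the bookkeeping of prime-to-$p$ degrees trivializes; cf.\ \cite[Proposition 101.16]{EKM}.) With either repair your approach gives a correct, self-contained proof of part (b) where the paper only gives a reference.
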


\begin{proof} (a) Let $K/k$ be a field extension.
By Nishimura's lemma, $X$ has a $K$-point if and only if so does $Y$;
see~\cite[Proposition A.6]{RY}. Thus the detection functors
$\mathcal{D}_X$ and $\mathcal{D}_Y$ are isomorphic, and
$\cd(X) = \ed(\mathcal{D}_X) = \ed(\mathcal{D}_Y) = \cd(Y)$.
Similarly, $\cd_p(X) = \cd_p(Y)$.

For a proof of part (b) see \cite[Lemma 3.6 and Remark 3.7]{snv}.
\end{proof}

\section
{Balanced algebras}

Let $Z/k$ be a Galois field extension, and $A$ be a central simple
algebra over $Z$. Given $\alpha \in \Gal(Z/k)$, we will denote the
``conjugate" $Z$-algebra $A \otimes_Z Z$, where the tensor product
is taken via $\alpha \colon Z \to Z$, by $^\alpha\! A$. We will say
that $A$ is {\em balanced over $k$} if
$^\alpha\! A$ is Brauer-equivalent to a tensor power of $A$ for
every $\alpha \in \Gal(Z/k)$.

Note that $A$ is balanced, if the Brauer class of $A$
descends to $k$\,: $^\alpha\! A$ is then isomorphic to $A$ for any $\alpha$.
In this section we will consider another family of balanced algebras.

Let $K/k$ be a field extension, $\rho \colon G \to \GL_n(K)$ be
an irreducible representation whose character $\chi$ is $k$-valued.
Recall from Theorem~\ref{thm.schur} that $\Env_k(\rho)$ is a central
simple algebra over $Z \simeq k(\chi_1) = \dots = k(\chi_n)$.

\begin{prop} \label{prop.conjugates}
$\Env_k(\rho)$ is balanced over $k$.
\end{prop}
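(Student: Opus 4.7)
The plan is to show that for each $\alpha\in\Gal(Z/k)$ there exists a positive integer $j$, coprime to $e$, such that $[{}^\alpha\! A]=j\cdot[A]$ in $\Br(Z)$, where $A:=\Env_k(\rho)$. The natural candidate for $j$ is obtained by lifting $\alpha$ to some $\tilde\alpha\in\Gal(E/k)$, $E:=k(\zeta_e)$, and setting $j$ by the formula $\tilde\alpha(\zeta_e)=\zeta_e^j$.

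The first step is to locate $A$ in the Brauer group. By Lemma~\ref{lem.env}, $A\otimes_k E=\Env_E(\rho_E)$; since $E$ splits $G$, we have $\rho_E\simeq m(\rho_1\oplus\cdots\oplus\rho_r)$, and hence by Lemma~\ref{lem.multiple}, $\Env_E(\rho_E)\simeq\prod_{i=1}^r M_{n_1}(E)$. On the other hand, $Z\subseteq E$ gives $Z\otimes_k E\cong\prod_{\beta\in\Gal(Z/k)}E$, whence $A\otimes_k E\cong\prod_\beta{}^\beta\! A\otimes_Z E$. Matching the two decompositions, each ${}^\beta\! A$ is split by $E$ and the factors are indexed by $\beta\in\Gal(Z/k)$ via $\beta\mapsto\beta(\chi_1)$; in particular $[A],\,[{}^\beta\! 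A]\in\Br(E/Z)=H^2(\Gal(E/Z),E^\times)$. Representation-theoretically, the $k$-valuedness of $\chi$ gives $\chi(g^j)=\tilde\alpha(\chi(g))=\chi(g)$, so the representation $\rho^{(j)}\colon g\mapsto\rho(g^j)$ has the same character as $\rho$ and is $K$-isomorphic to it; any intertwiner $T\in\GL_n(K)$ with $T\rho(g)T^{-1}=\rho(g^j)$ induces a $K$-algebra automorphism $\phi_T$ of $A\otimes_k K=\Env_K(\rho)$. Since $\rho_i(g^j)$ has character $\chi_{\alpha(i)}$, after base change to $\overline K$ the automorphism $\phi_T$ permutes the Wedderburn factors of $\Env_{\overline K}(\rho_{\overline K})\simeq\prod_i M_{n_1}(\overline K)$ via $i\mapsto\alpha^{-1}(i)$, and therefore acts on the center $K(\chi_1)$ through $\tilde\alpha$.

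The hard part will be upgrading this $K$-level semi-linear symmetry to the Brauer-class identity $[{}^\alpha\! A]=j\cdot[A]$ over $Z$. The plan is to realize $[A]$ as a \emph{cyclotomic} class in $\Br(E/Z)$: since $\Env_k$ depends only on the character (by the trace argument in the proof of Lemma~\ref{lem.env}, together with Lemma~\ref{lem.multiple}), we may replace $\rho$ by a $k$-irreducible representation with proportional character, making $A$ the classical Schur algebra of $\chi_1$. The Brauer--Witt theorem then says $[A]$ lies in the image of $H^2(\Gal(E/Z),\mu_e)\to H^2(\Gal(E/Z),E^\times)$, with the cyclotomic cocycle representative encoded by the intertwiner $T$ constructed above. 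On such classes the induced $\Gal(Z/k)=\Gal(E/k)/\Gal(E/Z)$-action on $\Br(E/Z)$ is multiplication by the cyclotomic character $\kappa\colon\Gal(E/k)\to(\Z/e)^\times$; therefore $[{}^\alpha\! A]=\kappa(\tilde\alpha)\cdot[A]=j\cdot[A]$, completing the proof.
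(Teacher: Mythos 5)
Your proposal ultimately rests on the same two pillars as the paper's proof: the Brauer--Witt theorem (cited in the paper as Yamada's Corollary 3.11), which says the Schur algebra is Brauer-equivalent to a cyclotomic algebra, followed by the observation that a lift of $\alpha$ to the relevant cyclotomic Galois group sends the cocycle (whose values are roots of unity) to a power of itself, so ${}^\alpha\! A$ is Brauer-equivalent to a tensor power of $A$. So the strategy is essentially the one the paper uses, and the conclusion is correct.

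Two points to flag. First, your ``representation-theoretic'' interlude with the intertwiner $T$ is broken as written: $\rho^{(j)}\colon g\mapsto\rho(g^j)$ is not a group homomorphism unless $G$ is abelian, so the intertwiner $T$ you posit need not exist. The correct object with character $g\mapsto\chi(g^j)$ is the Galois twist ${}^{\tilde\alpha}\!\rho$ (or the Adams operation $\psi^j\rho$), not the naive ``evaluate at $g^j$'' map. Fortunately, this interlude is inessential: the final paragraph derives everything from Brauer--Witt and the cyclotomic-character computation, without genuinely using $T$. Second, you should be careful about which root of unity the cyclotomic cocycle takes values in. Yamada's cyclotomic form $(\beta, Z(\zeta)/Z)$ may involve a $\zeta$ whose order does not divide $e$, so $\mu_e\subset E^\times$ is not guaranteed to contain the cocycle values, and lifting $\alpha$ only to $\Gal(k(\zeta_e)/k)$ may not be enough. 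The paper handles this by introducing $\epsilon$ with $k(\epsilon)=k(\zeta,\zeta_e)$ and lifting $\alpha$ to $\Gal(k(\epsilon)/k)$, so that both $\zeta$ and $\zeta_e$ are powers of $\epsilon$ and $\alpha(\beta(g,h))=\beta(g,h)^t$ holds with $t$ defined via $\alpha(\epsilon)=\epsilon^t$. Adjusting your argument to work over this larger cyclotomic field (or over $k(\zeta_m)$ for $m$ the order of $\zeta$) fixes the issue; the definition of ``balanced'' only requires some power, so the exact exponent is irrelevant.
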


\begin{proof}
Recall from~\cite[p.~14]{yamada2} that a cyclotomic algebra $B/Z$ is
a central simple algebra of the form
\[  B = \bigoplus_{g \in \Gal(Z(\zeta)/Z)}
Z(\zeta) u_g \, , \]
where $\zeta$ is a root of unity, $Z(\zeta)$ is a maximal subfield of $B$,
and the basis elements $u_g$ are subject to the relations
\[ \text{$u_g x = g(x) u_g$ and $u_g u_h = \beta(g, h) u_{gh} \quad$
for every $x \in Z(\zeta)$ and $g, h \in \Gal(Z(\zeta)/Z)$}. \]
Here $\beta \colon G \times G \to Z(\zeta)^*$ is a 2-cocycle whose
values are powers of $\zeta$.  Following the notational conventions
in~\cite{yamada2}, we will write $B := (\beta, Z(\zeta)/Z)$.

By \cite[Corollary 3.11]{yamada2},
$\Env_k(\rho)$ is Brauer-equivalent to some cyclotomic algebra $B/Z$,
as above.  Thus it suffices to show that every cyclotomic algebra
is balanced over $k$, i.e., $^\alpha \! B$ is Brauer-equivalent
to a power of $B$ over $Z$ for every $\alpha \in \Gal(Z/k)$.

By Theorem~\ref{thm.schur}(d), $Z$ is $k$-isomorphic to $k(\chi_1)$,
which is, by definition a subfield of $k(\zeta_e)$, where $e$ is
the exponent of $G$. Thus there is a root of unity $\epsilon$ such that
\[ Z(\zeta) \subset k(\zeta, \zeta_e) = k(\epsilon) \]
and both $\zeta$ and $\zeta_e$ are powers of $\epsilon$.
Note that $k(\epsilon)/k$ is an abelian extension, and
the sequence of Galois groups
\[ 1 \to \Gal(k(\epsilon)/Z) \to \Gal(k(\epsilon)/k) \to
\Gal(Z/k) \to 1 \]
is exact. In particular, every $\alpha \in \Gal(Z/k)$ can be lifted to
an element of $\Gal(k(\epsilon)/k)$, which we will continue to denote
by $\alpha$. Then $\alpha(\epsilon) = \epsilon^t$ for some integer $t$.
Since $\zeta$ is a power of $\epsilon$, and each $\beta(g, h)$
is a power of $\zeta$, we have
\begin{equation} \label{e.alpha}
\text{$\alpha(\beta(g, h)) = \beta(g, h)^t$ for
every $g, h \in \Gal(Z(\zeta)/k)$.}
\end{equation}
We claim that $^\alpha \! B$ is Brauer-equivalent
to $B^{\otimes t}$ over $Z$.  Indeed, since
$$
B =(\beta, Z(\zeta)/Z),
$$
we have $^\alpha \! B = (\alpha(\beta), Z(\zeta)/Z)$.
By~\eqref{e.alpha},
$^\alpha \! B = (\alpha(\beta), Z(\zeta)/Z) =
(\beta^t , Z(\zeta)/Z)$, and
$(\beta^t , Z(\zeta)/Z)$
is Brauer-equivalent to $B^{\otimes t}$, as desired.
\end{proof}

\section{Generalized Severi-Brauer varieties and
Weil transfers}

Suppose $Z/k$ is a finite Galois field extension and $A$ is
a central simple algebra over $Z$. For $1 \leqslant m \leqslant \deg(A)$,
we will denote by $\SB(A, m)$ the generalized Severi-Brauer
variety (or equivalently, the twisted Grassmannian)
of $(m-1)$-dimensional subspaces in $\SB(A)$.
The Weil transfer $\WR_{Z/k} (\SB(A, m))$
is a smooth projective absolutely irreducible
$k$-variety of dimension $[Z: k] \cdot m \cdot (\deg(A) - m)$.
For generalities on the Weil transfer, see, e.g.,~\cite{karpenko00}.

\begin{prop} \label{prop.generic} Let $Z$, $k$ and $A$ be as above,
$X := \WR_{Z/k} (\SB(A, m))$ for some
$1 \leqslant m \leqslant \deg(A)$, and
$K/k$ be a field extension.

\smallskip
(a) Write $K_Z := K \otimes_k Z$ as a direct product
$K_1 \times \dots \times K_s$,
where $K_1/Z, \dots, K_s/Z$ are field extensions. Then
$X$ has a $K$-point if and only if
the index of the central simple algebra $A_{K_i} := A \otimes_Z K_{i}$
divides $m$ for every $i = 1, \dots, s$.

\smallskip
(b) Assume that $m$ divides $\ind(A)$,
$A$ is balanced and $K = k(X)$ is the function field
of $X$. Then $K_Z = K \otimes_k Z$ is
a field, and $A \otimes_k K \simeq A \otimes _Z K_Z$ is
a central simple algebra over
$K_Z$ of index $m$.
\end{prop}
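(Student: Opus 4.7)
For part (a), the plan is to combine the universal property of the Weil transfer with the standard point criterion for generalized Severi--Brauer varieties. Specifically, $X(K) = \SB(A,m)(K \otimes_k Z)$, and since $K \otimes_k Z \simeq K_1 \times \cdots \times K_s$, this identifies with $\prod_i \SB(A_{K_i}, m)(K_i)$. The latter is non-empty precisely when $\ind(A_{K_i}) \mid m$ for each $i$, which is the desired criterion.

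For part (b) I proceed in three steps. First, I show that $K_Z$ is a field. Since $X = \WR_{Z/k}(\SB(A,m))$ is absolutely irreducible (as noted in the excerpt), $X_Z$ is irreducible over $Z$, so $K_Z = K \otimes_k Z$ embeds into the function field $k(X_Z)$ and is therefore a domain; as a finite-dimensional $K$-algebra that is a domain, it must be a field. Second, the identification $A \otimes_k K \simeq A \otimes_Z K_Z$ is tautological from associativity of the tensor product and gives a central simple $K_Z$-algebra of degree $\deg(A)$. Third, I verify that its index equals $m$. The upper bound $\ind(A_{K_Z}) \mid m$ follows immediately from part (a) applied to the generic $K$-point of $X$.

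The main obstacle is the lower bound $\ind(A_{K_Z}) \geq m$, which requires both hypotheses. My plan is to interpret $K_Z$ as the function field $Z(X_Z)$, where $X_Z \simeq \prod_{\sigma \in \Gal(Z/k)} \SB({}^\sigma\! A, m)$ is a product of generalized Severi--Brauer varieties over $Z$, and then appeal to an index reduction result for such products. The balanced hypothesis guarantees that each conjugate class $[{}^\sigma\! A]$ is a power of $[A]$ in $\Br(Z)$, so the potentially cooperative splitting effect from the various factors of $X_Z$ collapses to the generic splitting of a single $\SB(A, m)$. By the generic index reduction theorem for $\SB(A, m)$ with $m \mid \ind(A)$, this splitting yields index exactly $m$. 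Without the balanced hypothesis, distinct conjugates ${}^\sigma\! A$ could in principle contribute independent splitting constraints and force the index strictly below $m$; isolating the balanced case is precisely what makes the lower bound accessible, and this step is the one I expect to require genuine work.
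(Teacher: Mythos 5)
Your part (a) and the first two steps of part (b) coincide with the paper's argument: the adjunction $X(K)=\SB(A,m)(K\otimes_kZ)$ plus Blanchet's criterion for (a), absolute irreducibility of $X$ to get that $K_Z\simeq Z(X_Z)$ is a field, and the tautological identification $A\otimes_kK\simeq A\otimes_ZK_Z$. The divisibility $\ind(A\otimes_ZK_Z)\mid m$ via the generic point and part (a) is also fine (and is anyway automatic once one knows the index over the subfield $F:=Z(\SB(A,m))$ is $m$, since index only drops under field extensions).

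The one place where your proposal stops short is exactly the step you flag: the lower bound $\ind(A\otimes_ZK_Z)\geqslant m$. There is no off-the-shelf ``index reduction theorem for products of conjugate generalized Severi--Brauer varieties'' to appeal to; the collapse you describe has to be engineered. The paper does it as follows. Set $F=Z(\SB(A,m))$. By Wadsworth's generic index reduction result, $\ind(A\otimes_ZF)=m$ (this uses $m\mid\ind(A)$). Because $A$ is balanced, each conjugate ${}^{\alpha}\!A$ is Brauer-equivalent to a power of $A$, so $\ind({}^{\alpha}\!A\otimes_ZF)$ divides $m$ for every $\alpha\in\Gal(Z/k)$; by Blanchet, this means each $\SB({}^{\alpha}\!A,m)_F$ is a \emph{rational} $F$-variety. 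Hence the projection $X_Z=\prod_{\alpha}\SB({}^{\alpha}\!A,m)\to\SB(A,m)$ exhibits $K_Z=Z(X_Z)$ as a \emph{purely transcendental} extension of $F$, and the index of a central simple algebra is unchanged under purely transcendental extensions, giving $\ind(A\otimes_ZK_Z)=\ind(A\otimes_ZF)=m$. So the precise mechanism is: balancedness $\Rightarrow$ the remaining factors become rational over $F$ $\Rightarrow$ $K_Z/F$ is purely transcendental; this is the lemma you would need to supply to complete your step three.
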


\begin{proof} First note that $A \otimes_k K \simeq A \otimes _Z K_Z$.

\smallskip
(a) By the definition of  the Weil transfer,
$X = \WR_{Z/k} (\SB(A, m))$ has a $K$-point if and only if
$\SB(A, m)$ has a $K_Z$-point or equivalently, if and only if
$\SB(A, m)$ has a $K_i$-point for every $i = 1, \dots, s$.
On the other hand, by~\cite[Proposition 3]{blanchet},
$\SB(A, m)$ has a $K_i$-point if and only if
the index of $A_{K_i}$ divides $m$.

\smallskip
(b) Since $X$ is absolutely irreducible (see, e.g.  \cite[Lemma 3]{blanchet}),
$K_Z$ is $Z$-isomorphic to
the function field of the $Z$-variety
\[ X_Z := X \times_{\Spec(k)} \Spec(Z) =
\prod_{\alpha \in \Gal(Z/k)} \SB(^\alpha \! A, m) \, , \]
see~\cite[\S2.8]{borel-serre}.  Set $F := Z(\SB(A, m))$.
By~\cite[Corollary 1]{wadsworth},
\[ \ind(A \otimes_Z F) = m \, . \]
Since $A$ is balanced, i.e., each algebra
$^\alpha \! A$ is a power of $A$,
$\ind(^\alpha \! A \otimes_Z F)$ divides $m$ for every
$\alpha \in \Gal(Z/k)$. By~\cite[Proposition 3]{blanchet},
each $\SB(^\alpha \! A, m)_F$ is rational over $F$. Thus
the natural projection of $Z$-varieties
\[ X_Z = \prod_{\alpha \in \Gal(Z/k)} \SB(^\alpha \! A, m) \to
\SB(A, m) \]
induces a purely transcendental extension of function fields
$F \hookrightarrow K_Z$. Consequently,
\[ \ind(A \otimes _Z K_Z) = \ind(A \otimes_Z F) = m \, , \]
as claimed.
\end{proof}

\section{The essential dimension of a representation}
\label{sect.def-ed}

Let us now fix a finite group $G$ and an arbitrary
field $k$, and consider the covariant functor
\[ \Rep_{G, k} :\Fields_k\to\Sets \]
defined by
$\Rep_{G, k} (K) := \{ K$-isomorphism classes of
representations $G \to \GL_n(K) \}$
for every field $K/k$. Here $n \geqslant 1$ is allowed to vary.

The essential dimension $\ed(\rho)$ of a representation
$\rho \colon G \to \GL_n(K)$ is defined by viewing $\rho$ as an
object in $\Rep_{G, k} (K)$, as in Section~\ref{sect.ed}.
That is, $\ed(\rho)$ is the smallest transcendence degree
of an intermediate field $k \subset K_0 \subset K$
such that $\rho$ is $K$-equivalent to a representation
$\rho' \colon G \to \GL_n(K_0)$.
To illustrate this notion, we include an example, where
$\ed(\rho)$ is positive and two elementary lemmas.

\begin{example} \label{ex.H}
Let $\bbH = (-1, -1) $ be the algebra of Hamiltonian quaternions over
$k = \bbR$, i.e., the $4$-dimensional $\bbR$-algebra
given by two generators $i$, $j$, subject to relations, $i^2 = j^2 = -1$
and $ij = - ji$. The multiplicative subgroup
$G  = \{ \pm 1, \pm i, \pm j, \pm ij \}$ of $\bbH$
is the quaternion group of order $8$.
Let $K = \bbR(\SB(\bbH))$, where  $\SB(\bbH)$ denotes
the Severi-Brauer variety of $\bbH$.
The representation
$\rho \colon G \hookrightarrow \bbH \hookrightarrow \bbH \otimes_{\bbR} K
\simeq \Mat_2(K)$
is easily seen to be absolutely irreducible.
We claim that $\ed(\rho) = 1$.
Indeed, $\trdeg_{\bbR}(F) = 1$,
for any intermediate extension $\bbR \subset F \subset K$,
unless $F = \bbR$. On the other hand, $\rho$ cannot descend to $\bbR$,
because $\Env_{\bbR}(\rho) = \bbH$, and thus $m_{\bbR}(\rho) =
\ind(\bbH) = 2$ by Theorem~\ref{thm.schur}(d).
\qed
\end{example}

\begin{lem} \label{lem.subadditive}
Let $G$ be a finite group, $K/k$ be a field,
$\rho_i \colon G \to \GL_{n_i}(K)$ be representations
of $G$ over $K$ (for $i = 1, \dots, s$) and
$\rho \simeq a_1 \rho_1 \oplus \dots \oplus a_s \rho_s$,
where $a_1, \dots, a_s \geqslant 1$ are integers. Then
$\ed(\rho) \leqslant \ed(\rho_1) + \dots + \ed(\rho_s)$.
\end{lem}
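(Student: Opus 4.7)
The plan is to descend each summand separately to a small intermediate subfield of $K$ and then combine these subfields into a common one, using the subadditivity of transcendence degree for a compositum.

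First, for each $i = 1, \dots, s$, by definition of $\ed(\rho_i)$ there is an intermediate field $k \subset K_i \subset K$ with $\trdeg_k(K_i) = \ed(\rho_i)$ and a representation $\rho_i' \colon G \to \GL_{n_i}(K_i)$ such that $(\rho_i')_K$ is $K$-equivalent to $\rho_i$. Let $K_0 \subset K$ be the compositum of $K_1, \dots, K_s$. Then by the standard inequality for transcendence degrees of composita,
\[
\trdeg_k(K_0) \leqslant \sum_{i=1}^s \trdeg_k(K_i) = \sum_{i=1}^s \ed(\rho_i).
\]

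Next, each $\rho_i'$ can be base-changed from $K_i$ up to $K_0$, producing a representation $(\rho_i')_{K_0} \colon G \to \GL_{n_i}(K_0)$ whose extension to $K$ is still $K$-equivalent to $\rho_i$. Form the representation
\[
\rho_0 := a_1 \, (\rho_1')_{K_0} \oplus \dots \oplus a_s \, (\rho_s')_{K_0} \colon G \to \GL_n(K_0),
\]
which is defined over $K_0$. Then $(\rho_0)_K$ is $K$-equivalent to $a_1\rho_1 \oplus \dots \oplus a_s\rho_s \simeq \rho$, because $K$-equivalence of the individual summands $(\rho_i')_K \simeq \rho_i$ is preserved under taking direct sums (and under taking multiples $a_i$).

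Thus $\rho$ descends to the subfield $K_0$ of transcendence degree at most $\sum_i \ed(\rho_i)$ over $k$, which by definition of $\ed(\rho)$ gives $\ed(\rho) \leqslant \sum_{i=1}^s \ed(\rho_i)$. There is no real obstacle here; the only point requiring minor care is verifying that $K$-equivalence of representations is compatible with direct sums and multiplicities, which is immediate from the fact that an isomorphism $(\rho_i')_K \xrightarrow{\sim} \rho_i$ in $\GL_{n_i}(K)$ can be assembled blockwise into an isomorphism between the direct sums in $\GL_n(K)$.
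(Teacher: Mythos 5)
Your proof is correct and follows exactly the paper's argument: descend each $\rho_i$ to an intermediate field $K_i$ of transcendence degree $\ed(\rho_i)$, take the compositum $K_0$, and use subadditivity of transcendence degree. The extra care you take in checking that $K$-equivalence is compatible with direct sums is fine but is left implicit in the paper.
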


\begin{proof}
Suppose $\rho_i$ descends to an intermediate field
$k \subset K_i \subset K$, where $\trdeg_k(K_i) = \ed(\rho_i)$.
Let $K_0$ be the subfield of $K$ generated by $K_1, \dots, K_s$.
Then $\rho$ descends to $K_0$ and
$\ed(\rho) \leqslant \trdeg_k(K_0) \leqslant
\trdeg_k(K_1) + \dots + \trdeg_k(K_s) =
\ed(\rho_1) + \dots + \ed(\rho_s)$.
\end{proof}

\begin{lem} \label{lem.prel1}
Let $k \subset K$ be fields, $G$ be a finite group,
and $\rho \colon G \to \GL_n(K)$ be a representation. Let
$k' := k(\chi) \subset K$, where $\chi$ is the character of $\rho$.
Then the essential dimension
of $\rho$ is the same, whether we consider it as an object
on $\Rep_{K, k}$ or $\Rep_{K, k'}$.
%
\end{lem}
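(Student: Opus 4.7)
The plan is to prove two inequalities $\ed_{\Rep_{G,k}}(\rho) \leqslant \ed_{\Rep_{G,k'}}(\rho)$ and $\ed_{\Rep_{G,k'}}(\rho) \leqslant \ed_{\Rep_{G,k}}(\rho)$, exploiting the fact that the extension $k'/k$ is algebraic. Indeed, by definition $k' = k(\chi) \subset k(\zeta_e)$, so $k'/k$ is a finite algebraic extension and hence for every intermediate field $k \subset K_0 \subset K$ containing $k'$ one has $\trdeg_k(K_0) = \trdeg_{k'}(K_0)$.

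For the first inequality, I would observe that any intermediate field $k' \subset K_0 \subset K$ over which $\rho$ descends is also an intermediate field $k \subset K_0 \subset K$ over which $\rho$ descends, and the two transcendence degrees coincide by the remark above. Thus every field realizing the essential dimension of $\rho$ in $\Rep_{G,k'}$ automatically realizes a field of definition of $\rho$ in $\Rep_{G,k}$ with the same transcendence degree over the base field.

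For the reverse inequality, the key point is that if $\rho$ is $K$-equivalent to a representation $\rho' \colon G \to \GL_n(K_0)$ with $k \subset K_0 \subset K$, then the character of $\rho'$ coincides with $\chi$ (traces are conjugation-invariant), and the character of $\rho'$ clearly takes values in $K_0$. Hence $\chi(g) \in K_0$ for every $g \in G$, which means $k' = k(\chi) \subset K_0$. Therefore every field of definition of $\rho$ over $k$ is automatically a field of definition of $\rho$ over $k'$, and once again the transcendence degrees over $k$ and over $k'$ agree.

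There is no real obstacle here: the entire argument rests on the two elementary observations that (i) the character is preserved under $K$-equivalence, forcing any field of definition to contain $k'$, and (ii) $k'/k$ is algebraic, so transcendence degrees are insensitive to whether we measure them over $k$ or over $k'$. Combining the two inequalities gives the claimed equality.
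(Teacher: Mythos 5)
Your argument is correct and is essentially the paper's own proof: the paper likewise notes that any intermediate field $F$ to which $\rho$ descends must contain $k'=k(\chi)$ (since characters are preserved under $K$-equivalence) and that $\trdeg_k(F)=\trdeg_{k'}(F)$ because $k'/k$ is algebraic. You have merely written out the two inequalities that the paper leaves as ``immediate from the definition.''
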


\begin{proof} If $\rho$ descends to an intermediate field
$k \subset F \subset K$, then $F$ automatically contains $k'$.
Moreover, $\trdeg_k(F) = \trdeg_{k'}(F)$. The rest is immediate
from the definition.
\end{proof}

\begin{lem} \label{lem.ed=0} Assume that $\Char(k)$ does not divide $|G|$
and the Schur index $m_k(\lambda)$ equals  $1$ for every absolutely
irreducible representation
$\lambda \colon G \to \GL_n(K)$, where $K$ contains $k$.
(In fact, it suffices to only consider $K = \overline{k}$.)
Then $\ed(\rho) = 0$ for any representation
$\rho \colon G \to \GL_n(L)$ over any field $L/k$.  In other words,
$\ed(\Rep_{G, k}) = 0$.
\end{lem}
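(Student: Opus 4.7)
The plan is to reduce to an algebraic extension of $k$ using the fact that character values are always algebraic over $k$, and then invoke Corollary~\ref{cor.lift} directly.

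First I would observe that the character $\chi$ of $\rho$ takes values in $k(\zeta_e) \subseteq \overline{k}$, where $e$ denotes the exponent of $G$; in particular $k_0 := k(\chi)$ is a finite (hence algebraic) extension of $k$ contained in $L$. By Lemma~\ref{lem.prel1}, the essential dimension of $\rho$ viewed as an object of $\Rep_{G,k}$ agrees with its essential dimension viewed as an object of $\Rep_{G,k_0}$. Since $\trdeg_k(k_0) = 0$, it therefore suffices to show that $\rho$ descends from $L$ all the way down to $k_0$.

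Next I would write the decomposition $\rho_{\overline{L}} \simeq d_1 \rho_1 \oplus \cdots \oplus d_r \rho_r$ into absolutely irreducible components over $\overline{L}$ and verify the hypothesis of Corollary~\ref{cor.lift} applied with $k_0$ as the base field. The character of $\rho$ is $k_0$-valued by construction, so we need only check that $m_{k_0}(\rho_i)$ divides $d_i$ for each $i$; it is actually enough to show that $m_{k_0}(\rho_i) = 1$. By hypothesis $m_k(\rho_i) = 1$ (taking $K = \overline{L}$ in the assumption of the lemma, applied to the absolutely irreducible representation $\rho_i$), and Theorem~\ref{thm.schur}(e) yields that $m_{k_0}(\rho_i)$ divides $m_k(\rho_i)$. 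Hence $m_{k_0}(\rho_i) = 1$, and Corollary~\ref{cor.lift} produces a descent of $\rho$ to $k_0$, giving $\ed(\rho) = 0$ as required.

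The argument is mostly bookkeeping, and the crucial point (the main obstacle, if one wishes to call it that) is making sure that the version of the Schur index required by Corollary~\ref{cor.lift}, namely $m_{k_0}$, is controlled by the version appearing in the hypothesis, namely $m_k$; this is supplied by the divisibility statement in Theorem~\ref{thm.schur}(e). The parenthetical remark that it suffices to impose the hypothesis for $K = \overline{k}$ follows from the fact that in the non-modular case $\overline{k}$ is a splitting field for $G$, so every absolutely irreducible representation of $G$ over any overfield of $k$ is, up to equivalence over the algebraic closure, already defined over $\overline{k}$.
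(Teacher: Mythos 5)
Your argument is correct and follows the same route as the paper's proof: pass to $k_0 = k(\chi)$ via Lemma~\ref{lem.prel1}, use Theorem~\ref{thm.schur}(e) to see that the Schur indices over $k_0$ are still $1$, and conclude by Corollary~\ref{cor.lift}. The only difference is that you spell out the bookkeeping (and the parenthetical about $K=\overline{k}$) in more detail than the paper does.
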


\begin{proof}
Let $\chi$ be the character of $\rho$ and
$k': = k(\chi)$.  By Theorem~\ref{thm.schur}(e),
$m_{k'}(\lambda) = 1$ for every absolutely
irreducible representation $\lambda \colon G \to \GL_n(K)$
of $G$.  By Lemma~\ref{lem.prel1}
we may replace $k$ by $k' = k(\chi)$ and thus assume
that $\chi$ is $k$-valued. Corollary~\ref{cor.lift}
now tells us that $\rho$ descends to $k$.
\end{proof}

\begin{remark} \label{rem.char}
The condition of Lemma~\ref{lem.ed=0}
is always satisfied if $\Char(k) > 0$;
see~\cite[Theorem 74.9]{curtis-reiner2}.
This tells us that for non-modular
representations the notion of essential dimension
is only of interest when $\Char(k) = 0$.
The situation is drastically different in the modular setting;
see Section~\ref{sect.modular}.
\end{remark}


\section{Irreducible characters}
\label{sect.irreducible}

In view of Remark~\ref{rem.char}, we will now assume that $\Char(k) = 0$.
In this setting there is a tight connection between representations
and characters.

 \begin{lem} \label{lem.env-well-defined}
 Suppose $F_1/k$, $F_2/k$ are field extensions, and
 \[ \rho_1 \colon G \to \GL_n(F_1), \quad
 \rho_2 \colon G \to \GL_n(F_2) \]
 are representations of a finite
 group $G$, with the same character $\chi \colon G \to k$.
 Then the $k$-algebras  $\Env_k(\rho_1)$ and
 $\Env_k(\rho_2)$ are isomorphic.
 \end{lem}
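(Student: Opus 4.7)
The plan is to realize both envelopes as quotients of the group algebra $k[G]$ by the \emph{same} two-sided ideal, one determined purely by $\chi$, $k$, and $G$, independently of $F_i$ or of the specific model $\rho_i$. For each $i$, the representation $\rho_i$ extends by linearity to a surjective $k$-algebra homomorphism $\pi_i \colon k[G] \to \Env_k(\rho_i) \subseteq \M_n(F_i)$, so it is enough to prove the equality $\ker(\pi_1) = \ker(\pi_2)$; the induced $k$-algebra isomorphism $k[G]/\ker(\pi_1) \cong k[G]/\ker(\pi_2)$ will then produce the desired isomorphism $\Env_k(\rho_1) \cong \Env_k(\rho_2)$.

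I would characterize $\ker(\pi_i)$ as the set of $\alpha = \sum_{g \in G} a_g g$ such that $\sum_{g \in G} a_g \chi(gh) = 0$ for every $h \in G$. The inclusion $\ker(\pi_i) \subseteq \{\dots\}$ is the familiar trace trick already used in the proof of Lemma \ref{lem.env}: from $\pi_i(\alpha) = 0$, multiply by $\rho_i(h)$ on the right and take the matrix trace. For the reverse inclusion, suppose $\sum_g a_g \chi(gh) = 0$ for every $h \in G$. Then $\tr(\pi_i(\alpha) \cdot \rho_i(h)) = 0$ for every $h$, hence by linearity $\tr(\pi_i(\alpha) \cdot y) = 0$ for every $y \in \Env_{F_i}(\rho_i)$. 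Since $\Env_{F_i}(\rho_i)$ is a homomorphic image of the semisimple algebra $F_i[G]$, it is itself semisimple, and the trace form inherited from its embedding into $\M_n(F_i)$ is non-degenerate. This forces $\pi_i(\alpha) = 0$.

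Because the right-hand side of the characterization depends only on the triple $(G, k, \chi)$, and not on $i$, we conclude $\ker(\pi_1) = \ker(\pi_2)$, finishing the proof. No serious obstacle arises: the one delicate point is the non-degeneracy of the trace form on $\Env_{F_i}(\rho_i)$, but this is precisely the semisimplicity input that has already been exploited in the proof of Lemma \ref{lem.env} and is valid in the present setting since $\Char(k) = 0$ throughout Section \ref{sect.irreducible}.
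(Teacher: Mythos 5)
Your proof is correct, but it takes a genuinely different route from the paper. The paper's argument is a two-line one: embed $F_1$ and $F_2$ into a common overfield $F$, observe that in characteristic $0$ representations with the same character are $F$-equivalent, so $\rho_1$ and $\rho_2$ are conjugate by an element of $\GL_n(F)$, and hence $\Env_k(\rho_1)$ and $\Env_k(\rho_2)$ are conjugate $k$-subalgebras of $\M_n(F)$. Your argument instead produces a \emph{canonical} model: both envelopes are the quotient of $k[G]$ by the two-sided ideal $I_\chi = \{\sum_g a_g g : \sum_g a_g\chi(gh)=0\ \forall h\}$, and identifying $\ker(\pi_i)=I_\chi$ via the trace trick and the non-degeneracy of the trace form on the semisimple algebra $\Env_{F_i}(\rho_i)$ is a legitimate reuse of the mechanism already set up in the proof of Lemma~\ref{lem.env}. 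The trade-off: the paper's route is shorter and requires no ideal-theoretic bookkeeping, while yours is a bit longer but actually yields more — a canonical isomorphism $\Env_k(\rho_i)\cong k[G]/I_\chi$ rather than a bare isomorphism depending on a choice of common overfield and conjugating matrix, which makes the well-definedness of $\Env_k(\chi)$ in the subsequent paragraph completely transparent. Both proofs are valid.
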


 \begin{proof} Let $F/k$ be a field  containing both $F_1$ and $F_2$.
 Then $\rho_1$ and $\rho_2$ are equivalent over $F$, because
 they have the same character.  Thus $\Env_k(\rho_1)$ and
 $\Env_k(\rho_2)$ are conjugate inside $\M_n(F)$.
 \end{proof}

Given a representation $ \rho \colon G \to \GL_n(F)$,
with a $k$-valued character $\chi \colon G \to k$,
Lemma~\ref{lem.env-well-defined} tells us that,
up to isomorphism, the $k$-algebra
$\Env_k(\rho)$ depends only on $\chi$ and not on the specific
choice of $F$ and $\rho$. Thus we may denote this algebra
by $\End_k(\chi)$.

If $\rho$ is absolutely irreducible (and the character $\chi$ is
not necessarily $k$-valued), it is common to write $m_k(\chi)$ for
the index of $\End_{k(\chi)}(\chi)$ instead of $m_k(\rho)$.

 Let $\chi \colon G \to k$ be a character of $G$. Write
 \begin{equation} \label{e.char-decomposition}
 \chi = \sum_{i = 1}^r m_i \chi_i \, ,
 \end{equation}
 where $\chi_1, \dots, \chi_r \colon G \to \overline{k}$
 are absolutely irreducible and $m_1, \dots, m_r$ are non-negative integers.
 Since $\chi$ is $k$-valued, $m_i = m_j$ whenever $\chi_i$ and
 $\chi_j$ are conjugate over $k$.


\begin{lem} \label{lem.irreducible}
Let $\chi = \sum_{i = 1}^r m_i \chi_i \colon G \to k$ be a character of $G$,
as in~\eqref{e.char-decomposition}. Then the following are equivalent.

\smallskip
(a) $\chi$ is the character of a $K$-irreducible representation
$\rho \colon G \to \GL_n(K)$ for some field extension $K/k$.

\smallskip
(b) $\chi_1, \dots, \chi_r$ form a single  $\Gal(k(\chi_1)/k)$-orbit
and $m_1 = \dots = m_r$ divides $m_k(\chi_1) = \dots = m_k(\chi_r)$.
\end{lem}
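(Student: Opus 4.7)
For (a)\,$\Rightarrow$\,(b), the plan is to apply Theorem~\ref{thm.schur}(a)--(b) to $\rho$ and then match the resulting $\overline K$-decomposition against the given one over $\overline k$. Concretely, Theorem~\ref{thm.schur} gives $\rho_{\overline K}\simeq m_K(\chi_1)\,(\lambda_1\oplus\cdots\oplus\lambda_s)$ with characters forming a single $\Gal(K(\chi_1)/K)$-orbit. Since absolutely irreducible characters of $G$ over $\overline k$ and $\overline K$ correspond bijectively under $\overline k\hookrightarrow\overline K$, comparison with~\eqref{e.char-decomposition} identifies the characters appearing with exactly $\{\chi_1,\ldots,\chi_r\}$ and forces $m_1=\cdots=m_r=m_K(\chi_1)$. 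Lemma~\ref{lem.abelian}(a) shows every $K$-conjugacy restricts to a $k$-conjugacy, so the single $K$-orbit sits inside a single $k$-orbit; conversely, because $\chi$ is $k$-valued, $\{\chi_1,\ldots,\chi_r\}$ is a union of $k$-orbits, giving equality of orbits. Finally, $m_K(\chi_1)\mid m_k(\chi_1)$ by Theorem~\ref{thm.schur}(e), and $m_k(\chi_1)=\cdots=m_k(\chi_r)$ since Galois-conjugate central simple algebras share an index.

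For (b)\,$\Rightarrow$\,(a), I would construct the desired $K$ as the function field of a Weil transfer of a generalized Severi--Brauer variety. Set $m:=m_1=\cdots=m_r$ and $Z:=k(\chi_1)$. By Theorem~\ref{thm.schur}(c) pick a $k$-irreducible $\rho_0\colon G\to\GL_{n_0}(k)$ whose $\overline k$-decomposition contains $\lambda_1$; the orbit hypothesis and Theorem~\ref{thm.schur}(a)--(b) force the character of $\rho_0$ to be $m_k(\chi_1)(\chi_1+\cdots+\chi_r)$, and $B:=\Env_k(\rho_0)$ is a central simple $Z$-algebra of index $m_k(\chi_1)$, balanced over $k$ by Proposition~\ref{prop.conjugates}. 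Since $m\mid\ind(B)$, Proposition~\ref{prop.generic}(b) applied to $X:=\WR_{Z/k}(\SB(B,m))$ and $K:=k(X)$ yields $K_Z:=K\otimes_k Z$ a field with $\ind(B\otimes_Z K_Z)=m$. Then $K(\chi_1)=K_Z$ and $\Gal(K_Z/K)\cong \Gal(Z/k)$, so $\chi_1,\ldots,\chi_r$ remain a single $\Gal(K(\chi_1)/K)$-orbit. Let $\rho$ be the unique $K$-irreducible representation containing $\lambda_1$ as an $\overline K$-component (Theorem~\ref{thm.schur}(c)). By Lemma~\ref{lem.env}, $\Env_K(\rho_{0,K})\cong B\otimes_Z K_Z$, so $\Env_K(\rho)$ is Brauer-equivalent to $B\otimes_Z K_Z$ and has index $m$; Theorem~\ref{thm.schur}(a)--(b) then forces the character of $\rho$ to be $m(\chi_1+\cdots+\chi_r)=\chi$.

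The hard part is the reverse implication: one must exhibit a single $K/k$ which both drops the Schur index from $m_k(\chi_1)$ to exactly $m$ and keeps $\chi_1,\ldots,\chi_r$ in a Galois orbit of undiminished size. The balanced property of $B$ (Proposition~\ref{prop.conjugates}) is precisely the input needed by Proposition~\ref{prop.generic}(b) to secure both conditions simultaneously at $K=k(\WR_{Z/k}(\SB(B,m)))$.
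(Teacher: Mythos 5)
Your proposal is correct and follows essentially the same route as the paper: Theorem~\ref{thm.schur}(a),(b),(e) plus Lemma~\ref{lem.abelian} for (a)$\Rightarrow$(b), and for (b)$\Rightarrow$(a) the function field of $\WR_{Z/k}(\SB(\cdot,m))$ together with Proposition~\ref{prop.generic}(b) to force $m_K(\chi_1)=m$ while keeping $\chi_1,\dots,\chi_r$ in one Galois orbit. The only cosmetic differences are that the paper works with the underlying division algebra of $\Env_k(\chi)$ rather than the full algebra $B$, and deduces the preservation of the orbit from Lemma~\ref{lem.abelian}(c) instead of from $K\otimes_kZ$ being a field; both variants are equivalent.
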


\begin{proof} (a) $\Longrightarrow$ (b):
By Theorem~\ref{thm.schur}(a) and (b),
$\chi = m(\chi_1 + \dots + \chi_r)$,
where $\chi_1, \dots, \chi_r$ are absolutely irreducible characters
transitively permuted by $\Gal(K(\chi_1)/K)$, and
$m = m_K(\chi_1) = \dots = m_K(\chi_r)$.  By Lemma~\ref{lem.abelian}(b),
$\chi_1, \dots, \chi_r$ are also transitively permuted by
$\Gal(k(\chi_1)/k)$.  Moreover, by Theorem~\ref{thm.schur}(e),
$m$ divides $m_k(\chi_1) = \dots = m_k(\chi_r)$.

(b) $\Longrightarrow$ (a):
Let $K$ be the function field of the Weil transfer variety
$\WR_{Z/k}(\SB(A, m))$, where $A$ is the underlying division algebra
and $Z$ is the center of $\Env_k(\chi)$.
Since the variety $\WR_{Z/k}(\SB(A, m))$ is absolutely irreducible,
$k$ is algebraically
closed in $K$. Lemma~\ref{lem.abelian}(c) now tells us that
$\chi_1, \dots, \chi_r$
are conjugate over $K$. By Theorem~\ref{thm.schur}(c) there exists
an irreducible $K$-representation $\rho$ whose character is
$m_K(\chi_1)(\chi_1 + \dots + \chi_r)$. It remains to show that
$m_K(\chi_1) = m$. Indeed,
\[ m_K(\chi_1) = \ind (\Env_K(\chi)) = \ind (\Env_k(\chi) \otimes_k K) = m
\, . \]
Here the first equality follows from Theorem~\ref{thm.schur}(d),
the second from Lemma~\ref{lem.env}, and the third from
Proposition~\ref{prop.generic}(b).
\end{proof}

We will say that a character $\chi \colon G \to k$
is {\em irreducible over} $k$ if it satisfies the equivalent conditions
of Lemma~\ref{lem.irreducible}.

\section{The essential dimension of a character}
\label{sect.functor}

In this section we will assume that $\Char(k) = 0$
and consider subfunctors
\[  \Rep_{\chi}:\Fields_k\to\Sets \]
of $\Rep_{G, k}$
given by
\begin{multline*}
 \Rep_{\chi}(K) :=
 \\
 \text{$\{ K$-isomorphism classes of
representations $\rho \colon G \to \GL_n(K)$ with character $\chi \}$}
\end{multline*}
for every field $K/k$.  Here $\chi \colon G \to k$ is a fixed character and
$n = \chi(1_G)$.
The assumption that $\chi$ takes values in $k$ is natural in view of
Lemma~\ref{lem.prel1}, and the assumption that $\Char(k) = 0$
in view of Remark~\ref{rem.char}. Since any two $K$-representations
with the same character are equivalent, $\Rep_{\chi}(K)$ is either empty
or has exactly one element.  We will say that $\chi$ {\em can be realized
over $K/k$} if $\Rep_{\chi}(K) \neq \emptyset$. In particular,
$\Rep_{\chi}$ and $\Rep_{\chi'}$ are isomorphic if and only if
$\chi$ and $\chi'$ can be realized over the same fields $K/k$.

\begin{defn} \label{def.ed-character}
Let $\chi \colon G \to k$ be a character of
a finite group $G$ and $p$ be a prime integer.
We will refer to the essential dimension
of $\Rep_{\chi}$ as the {\em essential dimension of $\chi$}
and will denote this number by $\ed(\chi)$.
Similarly for the essential $p$-dimension:
\[ \text{$\ed(\chi) := \ed(\Rep_{\chi})$ and
$\ed_p(\chi) := \ed_p(\Rep_{\chi})$.} \]
\end{defn}

We will say that characters $\chi$ and $\lambda$ of $G$, are {\em disjoint} if
they have no common absolutely irreducible components.

\begin{lem} \label{lem.functors}
(a) If the characters $\chi, \lambda \colon G \to k$
are disjoint then $\Rep_{\chi + \lambda} \simeq
\Rep_{\chi} \times \Rep_{\lambda}$.

\smallskip
(b)
Suppose a character $\chi = \sum_{i = 1}^s m_i \chi_i
\colon G \to k$ is as in~\eqref{e.char-decomposition}. Set
$\chi' : = \sum_{i = 1}^r m_i' \chi_i$, where $m_i'$ is the greatest
common divisor of $m_i$ and $m_k(\chi_i)$. Then
$\Rep_{\chi} \simeq \Rep_{\chi'}$.
\end{lem}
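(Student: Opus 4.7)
The plan is to use the fact that for every $k$-valued character $\mu$ of $G$, the set $\Rep_\mu(K)$ is either empty or a singleton, since any two $K$-representations with the same character are $K$-equivalent. Consequently, a natural isomorphism of two such functors is equivalent to the assertion that they take non-empty values on the same collection of field extensions $K/k$; equivalently, that the corresponding characters can be realized over exactly the same $K$.

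For part (a), the ``if'' direction is immediate from the direct sum construction. For the converse, I would start with any $\rho \colon G \to \GL_n(K)$ with character $\chi+\lambda$ and decompose it as $\rho = \bigoplus_j \rho^{(j)}$ into $K$-irreducible summands. By Theorem~\ref{thm.schur}(b), the absolutely irreducible constituents of each $\rho^{(j)}$ form a single Galois orbit over $K$; by Lemma~\ref{lem.abelian}(b) this orbit is contained in a single Galois orbit over $k$. Since $\chi$ and $\lambda$ are disjoint and $k$-valued, their sets of absolutely irreducible constituents are disjoint unions of $\Gal(\overline{k}/k)$-orbits, so each summand $\rho^{(j)}$ contributes entirely to either $\chi$ or $\lambda$. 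Collecting the two kinds of summands yields a $K$-rational decomposition $\rho = \rho_\chi \oplus \rho_\lambda$ with $\rho_\chi$ having character $\chi$ and $\rho_\lambda$ having character $\lambda$.

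For part (b), I would first check that $\chi'$ is itself a $k$-valued character. When $\chi_i$ and $\chi_j$ are $k$-conjugate, the field $k(\chi_i)$ coincides with $k(\chi_j)$ (since $k(\chi_1)/k$ is Galois, being abelian), and conjugation identifies $\Env_{k(\chi_i)}(\chi_i)$ with $\Env_{k(\chi_j)}(\chi_j)$; hence $m_k(\chi_i) = m_k(\chi_j)$. Combined with $m_i = m_j$, this gives $m_i' = m_j'$. Next, applying Corollary~\ref{cor.lift} with the pair $(k, K)$ replaced by $(K, \overline{K})$, a $k$-valued character $\sum a_i \chi_i$ is realizable over $K$ if and only if $m_K(\chi_i) \mid a_i$ for every $i$. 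Therefore $\chi$ is realizable over $K$ iff $m_K(\chi_i) \mid m_i$ for all $i$, and $\chi'$ iff $m_K(\chi_i) \mid m_i'$ for all $i$. These conditions agree: the $\chi'$-condition implies the $\chi$-condition because $m_i' \mid m_i$, while the reverse implication uses Theorem~\ref{thm.schur}(e), which gives $m_K(\chi_i) \mid m_k(\chi_i)$ and thus $m_K(\chi_i) \mid \gcd(m_i, m_k(\chi_i)) = m_i'$.

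The one genuine obstacle is in part (a): ensuring that the splitting of $\rho$ according to $\chi$ and $\lambda$ is defined over $K$ rather than merely over $\overline{K}$. Working throughout with the $K$-irreducible decomposition of $\rho$ and invoking Lemma~\ref{lem.abelian}(b) to see that $K$-Galois orbits refine, rather than cross, the partition of absolutely irreducible characters into $k$-Galois orbits, is precisely what resolves this.
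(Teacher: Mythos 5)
Your proposal is correct and follows essentially the same route as the paper: both parts reduce to the realizability criterion of Corollary~\ref{cor.lift} (a $k$-valued character $\sum a_i\chi_i$ is realizable over $K$ iff $m_K(\chi_i)\mid a_i$ for all $i$), combined with Theorem~\ref{thm.schur}(e) for the divisibility $m_K(\chi_i)\mid m_k(\chi_i)$ in part (b). The only difference is cosmetic: in part (a) the paper simply cites Corollary~\ref{cor.lift} to see that realizability of $\chi+\lambda$ is equivalent to realizability of both summands, whereas you re-derive this by splitting a given $K$-representation along the $k$-Galois orbits of its absolutely irreducible constituents; both arguments rest on the same structure theory.
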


\begin{proof} Let $K$ be a field extension of $k$.

\smallskip
(a) By Corollary~\ref{cor.lift},
$\chi + \lambda$ can be realized over $K$ if and only if
both $\chi$ and $\lambda$ can be realized over $K$.

\smallskip
(b)  By Corollary~\ref{cor.lift}

\smallskip
(i) $\chi$ can be realized over $K$ if and only if

\smallskip
(ii) $m_K(\chi_i)$ divides $m_i$, for every $i = 1, \dots, s$.

\smallskip
\noindent
By Theorem~\ref{thm.schur}(e),
$m_K(\chi_i)$ divides $m_k(\chi)$. Thus (ii) is equivalent to

\smallskip
(iii) $m_K(\chi_1) = \dots = m_K(\chi_s)$ divides $m'$.

\smallskip
\noindent
Applying Corollary~\ref{cor.lift} one more time, we see
that (iii) is equivalent to

\smallskip
(iv) $\chi'$ can be realized over $K$.

\smallskip
\noindent
In summary, $\chi$ can be realized over $K$ if and
only if $\chi'$ can be realized over $K$, as desired.
\end{proof}

As we observed above, $\Rep_{\chi}(K)$ has at most one element
for every field $K/k$.  In other words, $\Rep_{\chi}$ is a {\em detection
functor} in the sense of~\cite{km06} or~\cite[Section 4a]{merkurjev-ed-2013}.
We saw in Section~\ref{sect.cd} that
to every algebraic variety $X$ defined over $k$, we can
associate the detection functor $\mathcal{D}_X$, where
$\mathcal{D}_X(K)$ is either empty or has exactly one element,
depending on whether or not $X$ has a $K$-point. Given a character
$\chi \colon G \to k$, it is thus natural
to ask if there exists a smooth projective $k$-variety
$X_{\chi}$ such that the functors $\Rep_{\chi}$
and $\mathcal{D}_{X_{\chi}}$ are isomorphic. The rest of
this section will be devoted to showing that this is, indeed,
always the case. We begin by defining $X_{\chi}$.

\begin{defn} \label{def.X_chi}
(a) Let $G$ be a finite group and
$\chi:= m(\chi_1 + \dots + \chi_r) \colon G \to k$ be an irreducible
character of $G$, where $\chi_1, \dots, \chi_r$ are
$\Gal(k(\chi_1)/k)$-conjugate absolutely irreducible characters,
and $m \geqslant 1$ divides $m_k(\chi_1) = \dots = m_k(\chi_r)$.
We define the $k$-variety $X_{\chi}$ as the Weil transfer
$\WR_{Z/k}(\SB(A_{\chi}, m))$, where  $Z$ is the center and
$A_{\chi}$ is the underlying division algebra of $\Env_k(\chi)$.

\smallskip
(b) More generally, suppose $\chi := \lambda_1 + \dots + \lambda_s$, where
$\lambda_1, \dots, \lambda_s \colon G \to k$ are pairwise disjoint and
irreducible over $k$. Then we define
$X_{\chi} := X_{\lambda_1} \times_k \dots \times_k X_{\lambda_r}$,
where each $X_{\lambda_i}$ is a Weil transfer of
a generalized Severi-Brauer variety, as in part (a).
\end{defn}

\begin{thm} \label{thm.main}
Let $G$ be a finite group and $\chi := \lambda_1 + \dots + \lambda_s$
be a character, where
$$
\lambda_1, \dots, \lambda_s \colon G \to k
$$
are pairwise disjoint and irreducible over $k$. Let
$X_{\chi}$ be the $k$-variety, as in Definition~\ref{def.X_chi}.
Then the functors
$\Rep_{\chi}$ and $\mathcal{D}_{X_{\chi}}$ are isomorphic.
Consequently $\ed(\chi) = \cd(X_{\chi})$ and
$\ed_p(\chi) = \cd_p(X_{\chi})$ for any prime $p$.
\end{thm}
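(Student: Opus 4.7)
The plan is to show that for every field extension $K/k$, the set $\Rep_\chi(K)$ is non-empty if and only if $X_\chi$ has a $K$-point. Both $\Rep_\chi$ and $\mathcal{D}_{X_\chi}$ are detection functors (each of $\Rep_\chi(K)$ and $\mathcal{D}_{X_\chi}(K)$ has at most one element, as noted in Section~\ref{sect.functor} and Section~\ref{sect.cd}), so any such equivalence on objects automatically upgrades to an isomorphism of functors. The equalities $\ed(\chi)=\cd(X_\chi)$ and $\ed_p(\chi)=\cd_p(X_\chi)$ then follow directly from the definitions.

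First I would reduce to the case where $\chi$ is irreducible over $k$. By Lemma~\ref{lem.functors}(a), iterated, $\Rep_\chi \simeq \Rep_{\lambda_1}\times\cdots\times\Rep_{\lambda_s}$. By Definition~\ref{def.X_chi}(b), $X_\chi = X_{\lambda_1}\times_k\cdots\times_k X_{\lambda_s}$, which has a $K$-point if and only if each factor does. So it suffices to prove the theorem when $\chi$ is irreducible, say $\chi = m(\chi_1+\cdots+\chi_r)$ with $\chi_1,\dots,\chi_r$ a $\Gal(Z/k)$-orbit of absolutely irreducible characters and $m\mid m_k(\chi_1)$, and $X_\chi = \WR_{Z/k}(\SB(A_\chi,m))$.

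In this irreducible case, write $K\otimes_k Z = K_1\times\cdots\times K_s$ as a product of fields. Under the bijection between factors $K_i$ and $\Gal(\overline K/K)$-orbits on the set $\{\chi_1,\dots,\chi_r\}$ (which holds because $Z \simeq k(\chi_1)$ and Galois orbits on embeddings $Z\hookrightarrow \overline K$ match both the factors of $K\otimes_k Z$ and the $K$-conjugacy classes among the $\chi_j$), I claim
\[
m_K(\chi_j) = \ind(A_\chi\otimes_Z K_i)
\qquad\text{whenever }\chi_j\text{ lies in the orbit corresponding to }K_i.
\]
To prove the claim, note that by Lemma~\ref{lem.env} we have $\Env_K(\chi) \simeq \Env_k(\chi)\otimes_k K$, whose center is $Z\otimes_k K = \prod K_i$, and the $i$-th direct factor is Morita equivalent to $A_\chi\otimes_Z K_i$; meanwhile, by Theorem~\ref{thm.schur}(d) this $i$-th factor is a central simple algebra over $K_i$ whose index is the common Schur index $m_K(\chi_j)$ of the characters in the orbit.

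Granting the claim, the two sides of the desired equivalence read as follows. By Corollary~\ref{cor.lift} applied over $K$, $\chi$ can be realized over $K$ (i.e.\ $\Rep_\chi(K)\neq\emptyset$) if and only if $m_K(\chi_j)\mid m$ for every $j=1,\dots,r$, equivalently $m_K(\chi_j)\mid m$ for one representative $\chi_j$ in each orbit. By Proposition~\ref{prop.generic}(a), $X_\chi$ has a $K$-point if and only if $\ind(A_\chi\otimes_Z K_i)\mid m$ for every $i=1,\dots,s$. The claim identifies these two conditions term by term, completing the proof. The main technical obstacle I anticipate is the bookkeeping in the claim: namely, carefully matching the primitive idempotent decomposition of $Z\otimes_k K$ with the Galois-orbit decomposition of $\{\chi_1,\dots,\chi_r\}$ and tracking how the two decompositions of $\Env_K(\chi)$ align; once this is done, everything else is a direct application of results already established in the paper.
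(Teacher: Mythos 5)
Your proposal is correct and follows essentially the same route as the paper: reduce to the $k$-irreducible case via Lemma~\ref{lem.functors}(a), decompose $K\otimes_k Z$ into fields, match those factors with the $\Gal(\overline K/K)$-orbits on $\{\chi_1,\dots,\chi_r\}$ by comparing the two decompositions of $\Env_K(\chi)$, identify $\ind(A_\chi\otimes_Z K_i)$ with $m_K(\chi_j)$ via Theorem~\ref{thm.schur}(d), and conclude with Corollary~\ref{cor.lift} on one side and Proposition~\ref{prop.generic}(a) on the other. The ``bookkeeping'' step you flag is exactly the comparison of the two product decompositions of $\Env_K(\chi)$ that the paper carries out, so there is no gap.
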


\begin{proof} In view of Lemma~\ref{lem.functors}(a) we may assume that
$\chi$ is irreducible over $k$, i.e., $s = 1$ and $\chi = \lambda_1$.
Write $\chi := m(\chi_1 + \dots + \chi_r)$, where
$\chi_1, \dots, \chi_r \colon G \to \overline{k}$ are
the absolutely irreducible components of $\chi$.
Let $K/k$ be a field extension.  By Corollary~\ref{cor.lift}
the following conditions are equivalent:

\smallskip
(i) $\Rep_{\chi}(K) \neq \emptyset$, i.e., $\chi$ can be realized
over $K$,

\smallskip
(ii) $m_K(\chi_j)$ divides $m$ for $j = 1, \dots, r$.

\smallskip
\noindent
Note that while the characters $\chi_1, \dots, \chi_r$ are conjugate over $k$,
they may not be conjugate over $K$.  Denote the
orbits of $\Gal(\overline{K}/K)$-action on
$\chi_1, \dots, \chi_r$ by
$\mathcal{O}_1, \dots, \mathcal{O}_t$,
and set $\mu_i := \sum_{\chi_j \in \mathcal{O}_i} \chi_j$,
so that $\chi = m(\mu_1 + \dots + \mu_t)$.

Denote the center of the central simple algebra
$\Env_k(\chi)$ by $Z$.
Write $K_Z := K \otimes_k Z$ as a direct product
$K_1 \times \dots \times K_s$,
where $K_1/Z, \dots, K_s/Z$ are field extensions,
as in Proposition~\ref{prop.generic}.
By Lemma~\ref{lem.env},
\begin{equation} \label{e.EnvK1}
\Env_{K}(\chi) \simeq
\Env_k(\chi) \otimes_Z K \simeq
\Env_k(\chi) \otimes_Z K_Z  \simeq
(\Env_{K}(\chi) \otimes_Z K_1) \times \dots \times
(\Env_{K}(\chi) \otimes_Z K_s) \, ,
\end{equation}
where $\simeq$ denotes isomorphism of $K$-algebras.
On the other hand, since $\mu_1, \dots, \mu_t$ are $K$-valued characters,
\begin{equation} \label{e.EnvK2}
\Env_{K}(\chi) \simeq \Env_K(m \mu_1) \times \dots \times \Env_K(m \mu_t) \, .
\end{equation}
Suppose $\chi_j \in \mathcal{O}_{i}$. Then by Lemma~\ref{lem.multiple}
$\Env_K(m \mu_i) \simeq \Env_K(\mu_i) \simeq
\Env_K(m_K(\chi_j) \mu_i)$,
and by Theorem~\ref{thm.schur}(d),
$\Env_K(m_K(\chi_j) \mu_i)$ is a central simple algebra of
index $m_K(\chi_j)$.  Comparing~\eqref{e.EnvK1} and~\eqref{e.EnvK2},
we conclude that
$s = t$, and after renumbering $K_1, \dots, K_s$, we may assume that
$K_i$ is the center of $\Env_K(m \mu_i)$.
Thus (ii) is equivalent to

\smallskip
(iii) the index of $\Env_K(m \mu_i) \simeq \Env_K(\chi) \otimes_Z K_i$
divides $m$ for every $i = 1, \dots, s$.

\smallskip
\noindent
By Proposition~\ref{prop.generic}(a), (iii) is equivalent to

\smallskip
(iv) $X_{\chi}$ has a $K$-point, i.e.,
$\mathcal{D}_{X_{\chi}}(K) \neq \emptyset$.

\smallskip
\noindent
The equivalence of (i) and (iv) shows that the functors
$\Rep_{\chi}$ and $\mathcal{D}_{X_{\chi}}$ are isomorphic.
Now
\[ \ed(\chi) \stackrel{\text{def}}{=}
\ed(\Rep_{\chi}) = \ed(\mathcal{D}_{X_{\chi}})
\stackrel{\text{def}}{=} \cd(X_{\chi}) \]
and similarly for the essential dimension at $p$.
\end{proof}

\begin{remark} Theorem~\ref{thm.main} can, in fact, be applied to
an arbitrary $k$-valued character $\chi \colon G \to k$. Indeed,
the character $\chi'$ of Lemma~\ref{lem.functors}(b) is a sum of
pairwise disjoint $k$-irreducible characters. Thus
$\Rep_{\chi} \simeq \Rep_{\chi'}$ by Lemma~\ref{lem.functors},
and $\Rep_{\chi'} \simeq \mathcal{D}_{X_{\chi'}}$
by Theorem~\ref{thm.main}.
\end{remark}

\section{Upper bounds}
\label{sect.upper-bounds}

In this section we will, once again, assume that $\Char(k) = 0$.
Combining Theorem~\ref{thm.main} with the inequality
\eqref{e.cd<dim}, we obtain the following

\begin{cor} \label{cor.upper}
Let $G$ be a finite group and
$\chi = m(\chi_1 + \dots + \chi_r) \colon G \to k$
be an irreducible character over $k$, as
in Section~\ref{sect.irreducible}.
Then $\ed(\chi) \leqslant \dim (X_{\chi}) = rm(m_k(\chi_1) - m)$.
\qed
\end{cor}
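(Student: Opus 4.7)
The plan is to derive the corollary by chaining together two ingredients already established in the paper: Theorem~\ref{thm.main}, which identifies $\ed(\chi)$ with $\cd(X_{\chi})$, and the inequality $\cd(X_{\chi})\leqslant \dim(X_{\chi})$ from~\eqref{e.cd<dim}. With these in hand, the only remaining task is to compute $\dim(X_{\chi})$ in terms of the data $r$, $m$, and $m_k(\chi_1)$.

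First I would recall that, by Definition~\ref{def.X_chi}(a), $X_{\chi}=\WR_{Z/k}(\SB(A_{\chi},m))$, where $Z$ is the center and $A_{\chi}$ is the underlying division algebra of $\Env_k(\chi)$. The dimension formula for a Weil transfer of a generalized Severi--Brauer variety, recalled in the paragraph preceding Proposition~\ref{prop.generic}, gives
\[
\dim(X_{\chi})=[Z:k]\cdot m\cdot(\deg(A_{\chi})-m).
\]
Next I would identify the two numerical factors. By Theorem~\ref{thm.schur}(d), $Z$ is $k$-isomorphic to $k(\chi_1)$, and since the characters $\chi_1,\dots,\chi_r$ form a single $\Gal(k(\chi_1)/k)$-orbit (this is part of the hypothesis of irreducibility over $k$, via Lemma~\ref{lem.irreducible}), we have $[Z:k]=r$. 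On the other hand, $A_{\chi}$ is the underlying division algebra of $\Env_k(\chi)$, so $\deg(A_{\chi})=\ind(\Env_k(\chi))=m_k(\chi_1)$, again using Theorem~\ref{thm.schur}(d) and the definition of the Schur index.

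Combining these identifications yields $\dim(X_{\chi})=rm(m_k(\chi_1)-m)$, and then Theorem~\ref{thm.main} together with~\eqref{e.cd<dim} gives
\[
\ed(\chi)=\cd(X_{\chi})\leqslant\dim(X_{\chi})=rm(m_k(\chi_1)-m),
\]
as required. There is really no hard step here: the corollary is a direct bookkeeping consequence of the two main results already established, and the only thing to be careful about is to invoke the correct statement from Theorem~\ref{thm.schur} to identify the center and degree of the enveloping algebra.
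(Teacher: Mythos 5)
Your proposal is correct and follows exactly the route the paper intends: the corollary is stated as an immediate consequence of Theorem~\ref{thm.main} combined with the inequality~\eqref{e.cd<dim}, with the dimension computed from the formula $\dim \WR_{Z/k}(\SB(A,m)) = [Z:k]\cdot m\cdot(\deg(A)-m)$ and the identifications $[Z:k]=r$, $\deg(A_{\chi})=m_k(\chi_1)$ via Theorem~\ref{thm.schur}(d). Your bookkeeping of these identifications is accurate and complete.
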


We are now in a position to prove upper bounds on
the essential dimension of an arbitrary representation
of a finite group $G$.

\begin{prop} \label{prop.crude-upper-bound}
Let $\rho \colon G \to \GL_n(K)$ be a
representation of a finite group $G$ over a field $K/k$.

\smallskip
(a) If $\rho$ is $K$-irreducible (but not necessarily
absolutely irreducible), then
$\ed(\rho) \leqslant n^2/4$.

\smallskip
(b) If $\rho$ is arbitrary, then
$\ed(\rho) \leqslant |G|^2/4$.

\smallskip
(c) $\ed(\Rep_{G, k}) \leq |G|^2/4$
for any base field $k$.
Here $\Rep_{G, k}$ is the functor defined at the beginning
of Section~\ref{sect.def-ed}.
\end{prop}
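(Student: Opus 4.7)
For part~(a), the plan is to use Lemma~\ref{lem.prel1} to replace $k$ by $k(\chi)$, where $\chi$ is the character of $\rho$; this leaves $\ed(\rho)$ unchanged and makes $\chi$ take values in $k$. Since $\rho$ is $K$-irreducible, Lemma~\ref{lem.irreducible} then shows that $\chi$ is irreducible over $k$, so we may write $\chi = m(\chi_1 + \cdots + \chi_r)$ where the $\chi_i$ are absolutely irreducible of common dimension $d := \chi_1(1)$, $m$ divides $m_k(\chi_1)$, and $n = rmd$. Corollary~\ref{cor.upper} yields
\[ \ed(\rho) \leq \ed(\chi) \leq rm\bigl(m_k(\chi_1) - m\bigr). \]
I would finish by combining Theorem~\ref{thm.schur}(f) (which gives $m_k(\chi_1) \leq d$) with the AM-GM inequality $m(m_k(\chi_1) - m) \leq m_k(\chi_1)^2/4 \leq d^2/4$, the bound $rd \leq rmd = n$ (since $m \geq 1$), and the obvious $d \leq n$, to get $rm(m_k(\chi_1)-m) \leq rd^2/4 \leq nd/4 \leq n^2/4$.

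For part~(b), a direct decomposition of $\rho$ into $K$-irreducibles followed by Lemma~\ref{lem.subadditive} and part~(a) fails, since $\sum n_j^2$ is not controlled by $|G|$ alone. The fix is a multiplicity reduction via Lemma~\ref{lem.functors}(b): after replacing $k$ by $k(\chi)$ so that $\chi$ is $k$-valued, write the absolutely irreducible decomposition $\chi = \sum_i a_i \chi_i$ and pass to $\chi' := \sum_i a_i' \chi_i$ with $a_i' := \gcd(a_i, m_k(\chi_i))$. Then $\Rep_\chi \simeq \Rep_{\chi'}$ and hence $\ed(\chi) = \ed(\chi')$. Setting $d_i := \chi_i(1)$ and combining Theorem~\ref{thm.schur}(f) with the classical identity $\sum_j d_j^2 = |G|$ summed over \emph{all} absolutely irreducible characters of $G$, I obtain
\[ \chi'(1) = \sum_i a_i' d_i \leq \sum_i m_k(\chi_i) d_i \leq \sum_i d_i^2 \leq |G|. \]
Now for any field extension $L/k$ and any representation $\rho''$ with character $\chi'$, we have $\dim \rho'' \leq |G|$; decomposing $\rho''$ into distinct $L$-irreducible summands of dimensions $n_j$ and invoking Lemma~\ref{lem.subadditive} together with part~(a) gives
\[ \ed(\rho'') \leq \sum_j n_j^2/4 \leq \Bigl(\sum_j n_j\Bigr)^{\!2}\!\!\bigm/ 4 \leq (\dim \rho'')^2/4 \leq |G|^2/4. \]
Taking the supremum over $L/k$ and $\rho''$ yields $\ed(\chi') \leq |G|^2/4$, whence $\ed(\rho) \leq \ed(\chi) = \ed(\chi') \leq |G|^2/4$.

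Part~(c) is immediate from (b) by taking the supremum over all $K/k$ and $\rho \in \Rep_{G,k}(K)$; in positive characteristic coprime to $|G|$, Lemma~\ref{lem.ed=0} and Remark~\ref{rem.char} give the stronger conclusion $\ed(\Rep_{G,k}) = 0$. The main obstacle is the multiplicity-reduction step in (b): without passing from $\chi$ to $\chi'$ via Lemma~\ref{lem.functors}(b), the dimension of a representation realizing $\chi$ is unbounded in terms of $|G|$, so Lemma~\ref{lem.subadditive} and part~(a) alone cannot suffice. Once the reduction is made, the identity $\sum d_i^2 = |G|$ precisely controls the total dimension and closes the argument.
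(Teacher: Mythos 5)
Your argument is correct, and for parts (a) and (c) it is essentially the paper's proof: the only variation in (a) is the last step, where the paper bounds $r\,m_k(\chi_1)^2 \leqslant n^2$ via $r\,m_k(\chi_1)^2 \leqslant \dim_k\Env_k(\rho) = \dim_K\Env_K(\rho) \leqslant n^2$ (using that $\Env_k(\rho)$ is central simple of index $m_k(\chi_1)$ over a centre $Z$ with $[Z:k]=r$), whereas you use $m_k(\chi_1)\mid d$ and $n=rmd$; both are fine. For part (b) your route genuinely diverges, and your stated reason for the detour is mistaken: the paper does exactly what you claim ``fails.'' It decomposes $\rho = a_1\rho_1\oplus\dots\oplus a_s\rho_s$ into \emph{distinct} $K$-irreducibles, applies Lemma~\ref{lem.subadditive} (which discards the multiplicities $a_i$) together with part (a), and then proves $\sum_i \dim(\rho_i)^2 \leqslant |G|^2$ directly. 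This works because the absolutely irreducible constituents of distinct $K$-irreducibles are pairwise disjoint by Theorem~\ref{thm.schur}(c), so $\dim(\rho_i)=m_ir_i\dim(\rho_{i1})\leqslant r_i\dim(\rho_{i1})^2$ (via Theorem~\ref{thm.schur}(f)) yields $\sum_i\dim(\rho_i)\leqslant\sum_{i,j}\dim(\rho_{ij})^2\leqslant|G|$, hence $\sum_i\dim(\rho_i)^2\leqslant|G|^2$. So $\sum_j n_j^2$ \emph{is} controlled by $|G|$ alone once the sum runs over distinct constituents. Your alternative --- reducing multiplicities via Lemma~\ref{lem.functors}(b) so that $\chi'(1)\leqslant|G|$, then applying subadditivity and part (a) to a representation of dimension at most $|G|$ --- is a valid substitute reaching the same bound; in effect your inequality $\sum_j n_j\leqslant\dim\rho''\leqslant|G|$ is the same combinatorial fact, packaged through the isomorphism $\Rep_{\chi}\simeq\Rep_{\chi'}$ rather than read off from the decomposition of $\rho$ itself. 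The only cost is that you must invoke the character-functor formalism (hence $\Char(k)=0$) already in (b), which the paper's direct estimate avoids, though this is harmless since Section~\ref{sect.upper-bounds} assumes characteristic zero anyway.
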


\begin{proof} (a) By Lemma~\ref{lem.prel1}
we may assume that the character $\chi$ of $\rho$ is $k$-valued.
By Lemma~\ref{lem.irreducible}, $\chi$ is irreducible over $k$. Write
$\chi = m(\chi_1 + \dots + \chi_r)$, where $m \ge 1$ divides
$m_k(\chi_1) = \dots = m_k(\chi_r)$.
By Corollary~\ref{cor.upper}
\begin{equation} \label{e.upper1}
\ed(\rho) \leqslant r m (m_k(\chi_1) - m) \leqslant r
\frac{m_k(\chi_1)^2}{4} \, .
\end{equation}
Now recall that by Theorem~\ref{thm.schur}(d),
$\Env_k(\rho)$ is a central simple algebra of index $m_k(\chi_1)$
over a field $Z$ such that $[Z:k] = r$.
Thus
\begin{equation} \label{e.upper2}
r m_k(\chi_1)^2 \leqslant r \dim_Z (\Env_k(\rho)) =
 \dim_k (\Env_k(\rho)) =
\dim_K (\Env_K(\rho)) \leqslant n^2 \, .
\end{equation}
Here the equality $\dim_k (\Env_k(\rho)) =
\dim_K (\Env_K(\rho))$ follows from Lemma~\ref{lem.env},
and the inequality
$\dim_K (\Env_K(\rho)) \leqslant n^2$ follows from the fact
that
$\Env_K(\rho)$ is a $K$-subalgebra of $\Mat_n(K)$.
Combining \eqref{e.upper1} and \eqref{e.upper2},
we obtain
$\ed(\rho) \leqslant  n^2/4$.

\smallskip
(b) Decompose $\rho = a_1 \rho_1 \oplus \dots \oplus a_s \rho_s$
as a direct sum of distinct $K$-irreducibles. By Lemma~\ref{lem.subadditive},
$\ed(\rho) \leqslant \ed(\rho_1) + \dots + \ed(\rho_s)$. Thus in
view of part (b) we only need to show that
\begin{equation} \label{e.upper-bound}
 \sum_{i= 1}^s \dim(\rho_i)^2  \le |G|^2 \, .
\end{equation}
Over $\overline{K}$,
$\rho_i \simeq m_i (\rho_{i1} \oplus \dots \oplus \rho_{i r_i})$,
where the $\rho_{ij}$ are distinct $\overline{K}$-irreducibles, and
$m_i$ is the common Schur index $m_K(\rho_{i1}) = \dots = m_K(\rho_{i r_i})$,
as in Theorem~\ref{thm.schur}(a). The representations
$\rho_{i1}, \dots, \rho_{i r_i}$
are conjugate over $K$ and thus have the same dimension.
By Theorem~\ref{thm.schur}(c),
the $\overline{K}$-irreducible representations
$\rho_{ij}$ are pairwise non-isomorphic, and by
Theorem~\ref{thm.schur}(f),
$m_i \leqslant \dim(\rho_{i1}) = \dots = \dim(\rho_{i r_i})$.
Thus
\[ \dim(\rho_i) = m_i r_i \dim(\rho_{i1}) \leqslant  r_i \dim(\rho_{i1})^2 \]
and
\[ \sum_{i= 1}^s \dim(\rho_i)^2  \leqslant
\sum_{i = 1}^s  r_i^2 \dim(\rho_{i1})^4 \leqslant
(\sum_{i = 1}^s r_i \dim(\rho_{i1})^2 ) ^2  =
(\sum_{i = 1}^s \sum_{j = 1}^{r_i}  \dim(\rho_{ij})^2)^2 \leqslant |G|^2. \]
Here the last inequality follows from the fact that the sum of
the squares of the dimensions of $\overline{K}$-irreducible
representations is $|G|$; see,
e.g.,~\cite[Corollary 2.4.2(a)]{serre-representations}.

(c) is just a restatement of (b); see Section~\ref{sect.ed}.
\end{proof}

\section{A variant of a theorem of Brauer}
\label{sect.brauer-ed}

A theorem of R.~Brauer~\cite{brauer} asserts for every integer
$l \geqslant 1$ there exists a number field $k$, a finite group $G$ and
an absolutely irreducible character $\chi_1$ such that the Schur index
$m_k(\chi_1) = l$. For alternative proofs of Brauer's theorem,
see~\cite{berman} or~\cite{yamada}.

In this section we will prove an analogous statement with
the Schur index replaced by the essential dimension. Note
however, that the analogy is not perfect, because the representation
we construct will not be irreducible for any $l \geqslant 2$.

\begin{prop} \label{prop.brauer-ed}
For every integer $l \geqslant 0$ there exists a number field $k$, a finite group
$G$, and a character $\chi \colon G \to k$ such that $\ed(\chi) = l$.
\end{prop}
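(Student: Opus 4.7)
For $l = 0$ the trivial character over $k = \Q$ has $\ed = 0$, so the interesting case is $l \geq 1$. My plan is to invoke Theorem~\ref{thm.main} to reduce computing $\ed(\chi)$ to computing the canonical dimension of a suitable product of conics, and then to apply an incompressibility theorem.

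Fix $l \geq 1$. I would work over $k = \Q$ (or a suitable finite extension) and select $l$ quaternion division algebras $A_1, \dots, A_l$ whose Brauer classes are $\F_2$-linearly independent in $\Br(k)[2]$. Concrete small examples are $A_1 = (-1,-1)_\Q$ (realized as $\Env_\Q$ of the faithful $2$-dimensional character of the quaternion group $Q_8$) and $A_2 = (-1,-3)_\Q$ (realized by the dicyclic group $Q_{12}$); for arbitrary $l$ an infinite supply of pairwise independent quaternion Schur classes is provided by the classical Brauer--Witt / Benard--Schacher theory, possibly after enlarging $k$. Let $G_i$ be the corresponding finite groups and $\chi_i^{(0)}$ the corresponding absolutely irreducible $k$-valued characters, so that $\Env_k(\chi_i^{(0)}) \simeq A_i$.

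Next I would form $G := G_1 \times \cdots \times G_l$, let $\chi_i$ denote the inflation of $\chi_i^{(0)}$ through the projection $G \twoheadrightarrow G_i$, and set $\chi := \chi_1 + \cdots + \chi_l$. Since the absolutely irreducible characters of $G$ are outer tensor products of absolutely irreducible characters of the factors, $\chi_i$ and $\chi_j$ share no absolutely irreducible constituent for $i \neq j$; thus $\chi$ is a sum of pairwise disjoint $k$-irreducible characters. Moreover $\Env_k(\chi_i) \simeq A_i$, because inflation through a surjection does not change the image in $\GL_2$. Consequently, by Definition~\ref{def.X_chi}(b),
\[ X_\chi \;=\; \SB(A_1) \times_k \cdots \times_k \SB(A_l), \]
a smooth projective $k$-variety of dimension $l$.

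By Theorem~\ref{thm.main}, $\ed(\chi) = \cd(X_\chi)$. The upper bound $\cd(X_\chi) \leq l$ is immediate from~\eqref{e.cd<dim}. For the matching lower bound I would appeal to Karpenko's incompressibility theorem for products of Severi--Brauer varieties of quaternion algebras with $\F_2$-independent Brauer classes, which gives $\cd_2(X_\chi) = l$ and hence $\cd(X_\chi) \geq l$. Together with the upper bound, this yields $\ed(\chi) = l$. I expect the $2$-incompressibility step to be the main obstacle; everything else is essentially bookkeeping with the results established earlier in the paper.
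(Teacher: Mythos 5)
Your overall strategy coincides with the paper's: reduce to the canonical dimension of a product of conics via Theorem~\ref{thm.main}, then invoke the Karpenko--Merkurjev incompressibility theorem for products of Severi--Brauer varieties of quaternion algebras with $\F_2$-independent Brauer classes (\cite[Theorem 2.1]{km08} --- this is exactly the reference the paper uses, so the step you flag as ``the main obstacle'' is in fact a citation). The reduction itself (product group $G=G_1\times\cdots\times G_l$, inflated characters, disjointness, $X_\chi=\SB(A_1)\times_k\cdots\times_k\SB(A_l)$, upper bound from~\eqref{e.cd<dim}) is also the paper's argument and is correct.

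The genuine gap is the step you dispatch with ``an infinite supply of pairwise independent quaternion Schur classes is provided by the classical Brauer--Witt / Benard--Schacher theory, possibly after enlarging $k$.'' This is precisely what the paper calls ``the most delicate part of the argument'' and proves in a dedicated lemma (Lemma~\ref{lem.quaternion}). What you must produce is a \emph{single} number field $k$, groups $G_1,\dots,G_l$, and absolutely irreducible \emph{$k$-valued} characters $\chi_i$ with $\Env_k(\chi_i)=A_i$ a quaternion division algebra central over $k$ (so that $k(\chi_i)=k$ and $X_{\chi_i}$ is a conic rather than a Weil transfer), such that $[A_1],\dots,[A_l]$ are $\F_2$-independent in $\Br(k)$. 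Over $\Q$ itself the quaternion classes realizable with rational character field are quite constrained, and your $Q_8$, $Q_{12}$ examples do not obviously continue; moreover ``enlarging $k$'' is not harmless, since base change can kill or identify Brauer classes, so independence must be re-verified over the common field on which all $l$ characters take their values. The paper handles this concretely: it takes primes $p_1,\dots,p_l\equiv 3\pmod 4$, sets $k=\Q(\zeta_{p_1}+\zeta_{p_1}^{-1},\dots,\zeta_{p_l}+\zeta_{p_l}^{-1})$, realizes $G_i=\Z/p_i\Z\rtimes\Z/4\Z$ inside $A_i=((\zeta_{p_i}-\zeta_{p_i}^{-1})^2,-1)$, and proves independence by computing $N_{k/\Q}$ of the relevant entry and applying Fermat's two-squares theorem. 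Until you supply such a construction (or a precise reference that yields one over a fixed $k$), the proof is incomplete at its crux.
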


\begin{proof}
The proposition is obvious for $l = 0$. Indeed, the trivial
representation $\rho \colon G \to \GL_1(k)$ has essential dimension $0$
for any group $G$ and any number field $k$.
We may thus assume that $l \geqslant 1$.

The strategy of the proof is as follows. We will
construct finite groups $G_1, \dots, G_l$ and
$2$-dimensional absolutely irreducible characters
$\chi_i \colon G_i \to k$, for a suitable number field $k$,
such that the Brauer classes of the quaternion algebras
$A_i := \Env_k(\chi_i)$ are
linearly independent over $\bbZ/ 2 \bbZ$ in $\Br(k)$.
(Proving linear independence for these Brauer classes
will be the most delicate part of the argument.
We will defer it to Lemma~\ref{lem.quaternion}.) We will view
each $\chi_i$ as a character of $G = G_1 \times \dots \times G_l$
via the natural projection $G \to G_i$ and
set $\chi := \chi_1 + \dots + \chi_r
\colon G \to k$. By Theorem~\ref{thm.main}
\[ \ed(\chi) = \cd (X_{\chi}) \, , \]
where $X_{\chi} : = X_{\chi_1}\times_k \dots \times_k X_{\chi_l}$,
and $X_{\chi_i}$ is the $1$-dimensional Severi-Brauer variety
$\SB(A_i)$. Since the Brauer classes of $A_1, \dots, A_l$ are
linearly independent over $\bbZ/ 2 \bbZ$ in $\Br(k)$,
\cite[Theorem 2.1]{km08} tells us that $\cd(X_{\chi}) = l$, as desired.

We now proceed with the construction of $k$, $G_1, \dots, G_l$ and
$\chi_1, \dots, \chi_l$.
Choose $l$ distinct prime integers $p_1, \dots, p_l \equiv 3 \pmod{4}$
and let
$F = \bbQ (\zeta_{p_1}, \dots, \zeta_{p_l})$, where as usual,
$\zeta_p$ denotes a primitive $p$th root of unity.  The extension
$F/\bbQ$ is Galois
with
\[ \Gal(F/\bbQ) =
\Gamma_1 \times \dots \times \Gamma_l \, , \]
where $\Gamma_i \simeq \bbZ / (p-1) \bbZ$.
Since $p_i \equiv 3 \pmod{4}$, $\Gamma_i$ has a unique
subgroup of order $2$, which we will denote by $\Gamma_i[2]$.
The non-trivial element of $\Gamma_i[2]$ takes $\zeta_{p_i}$ to $\zeta_{p_i}^{-1}$
and preserves $\zeta_{p_j}$ for every $j \neq i$.
The elementary $2$-group $\Gamma_1[2] \times \dots \times \Gamma_l[2]$
is the Sylow $2$-subgroup of $\Gamma$; we will denote it by $\Gamma[2]$.
We now set
\[ k := F^{\Gamma[2]} =
\bbQ(\zeta_{p_1} + \zeta_{p_1}^{-1}, \dots, \zeta_{p_l} + \zeta_{p_l}^{-1})
\, . \]
Let $A_i$ be the quaternion algebra
$((\zeta_{p_i} - \zeta_{p_i}^{-1})^2, -1)$ over $k$.
That is, $A_i$ is the $4$-dimensional $k$-algebra generated by
two elements, $x$ and $y$, subject to the relations
\[ \text{$x^2 = (\zeta_{p_i} - \zeta_{p_i}^{-1})^2$, $y^2 = -1$
and $x y = - yx$.} \]
Note that $k(\zeta_{p_i} - \zeta_{p_i}^{-1}) = k(\zeta_{p_i})$
is a maximal subfield of $A_i$, and
\[ G_i: = <\zeta_{p_i}> \rtimes
<y> \simeq \bbZ/ p_i \bbZ \rtimes \bbZ/ 4 \bbZ \]
is a multiplicative subgroup of $A_i^*$.
Since $k(\zeta_{p_i})$ splits $A_i$ and $k(\zeta_{p_i}) \subset F$, the inclusion
$G_i \hookrightarrow A_i$ gives rise to a $2$-dimensional
representation
\[ \rho_i \colon G_i \hookrightarrow A_i \hookrightarrow \Mat_2(F) \, .  \]
By our construction $A_i = \Env_k(\rho_i)$ and the character $\chi_i$
of $\rho_i$ is $k$-valued.  Since $\rho_i$ is faithful and
$G_i$ is a non-abelian group, this representation is absolutely
irreducible. (Otherwise $\rho_i$ would embed $G$ as a subgroup of
the abelian group $\GL_1(\overline{\bbQ}) \times \GL_1(\overline{\bbQ})$,
a contradiction.) Proposition~\ref{prop.brauer-ed} is now a
consequence of Lemma~\ref{lem.quaternion} below.
\end{proof}

\begin{remark} \label{rem.brauer-ed}
Proposition~\ref{prop.brauer-ed} implies that
there exists a field extension $K/k$ and a linear representation
$\rho \colon G \to \GL_n(K)$ such that $\ed(\rho) = l$.
Note however, that $\rho$ is not the same as
$\rho_1 \times \dots  \times \rho_l \colon G \to \GL_n(F)$,
even though they have the same character.  Indeed,
since $F/k$ is a finite extension,
$\ed(\rho_1 \times \dots  \times \rho_l) = 0$.
Under the isomorphism of functors
$\Rep_{\chi} \simeq \mathcal{D}_{X_{\chi}}$ from
Theorem~\ref{thm.main}, $\rho_1 \times \dots  \times \rho_l$
corresponds to an $F$-point of $X_{\chi}$, while
$\rho$ corresponds to the generic point.
\end{remark}

\begin{lem} \label{lem.quaternion}
Let $p_1, \dots, p_l$ be distinct prime integers,
each $\equiv 3 \pmod{4}$, and $A_i$ be the quaternion algebra
$((\zeta_{p_i} - \zeta_{p_i}^{-1})^2, -1)$
over
$k = \bbQ(\zeta_{p_1} + \zeta_{p_1}^{-1}, \dots,
\zeta_{p_l} + \zeta_{p_l}^{-1})$.
Then the classes of $A_1, \dots, A_l$
are linearly independent over $\bbZ/ 2 \bbZ$ in $\Br(k)$.
\end{lem}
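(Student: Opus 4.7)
The plan is to show, for every nonempty $S \subseteq \{1, \dots, l\}$, that the Brauer class $\sum_{i \in S}[A_i] = [(\prod_{i \in S} \alpha_i, -1)]$, where $\alpha_i := (\zeta_{p_i} - \zeta_{p_i}^{-1})^2$, is nontrivial by exhibiting a place at which the local invariant is nontrivial. Fix any $i \in S$; the relevant place will be a prime $v$ of $k$ lying above $p_i$. From the identity $\zeta_{p_i} - \zeta_{p_i}^{-1} = -\zeta_{p_i}^{-1}(1 - \zeta_{p_i})(1 + \zeta_{p_i})$ and the fact that $1 + \zeta_{p_i}$ is a cyclotomic unit (because $p_i$ is odd), I read off that $\alpha_i$ has valuation $2$ at the unique prime of $\Q(\zeta_{p_i})$ above $p_i$, and hence valuation $1$ at the unique prime of $L_i := \Q(\zeta_{p_i} + \zeta_{p_i}^{-1})$ above $p_i$. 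Writing $M_i := \prod_{j \neq i} L_j$, the subfields $L_i$ and $M_i$ of $k$ are linearly disjoint over $\Q$ and $M_i/\Q$ is unramified at $p_i$, so the extension $k = L_i M_i$ of $L_i$ is unramified at primes above $p_i$. This gives $v(\alpha_i) = 1$ at every such $v$, while $v(\alpha_j) = 0$ for $j \neq i$ (since each $\alpha_j$ is a unit away from the primes above $p_j$).

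Next I will compute the relevant Hilbert symbol. For every $j \neq i$, both $\alpha_j$ and $-1$ are units at $v$, and $v$ has odd residue characteristic $p_i$, so $(\alpha_j, -1)_v = 1$; therefore $(\prod_{j \in S}\alpha_j, -1)_v = (\alpha_i, -1)_v$. Setting $K := k_v$, the extension $K(\sqrt{\alpha_i})/K$ is ramified quadratic because $v(\alpha_i) = 1$; since the norm of $\sqrt{\alpha_i}$ equals $-\alpha_i$, its norm subgroup, which has index $2$ in $K^*$ by local class field theory, equals $K^{*2} \sqcup (-\alpha_i)K^{*2}$. Thus $-1$ lies in the norm group precisely when $-1 \in K^{*2}$ (the only other possibility, $-1 \equiv -\alpha_i$ modulo squares, is excluded because $\alpha_i$ has odd valuation), which in turn happens exactly when the residue field $\F_{p_i^{f_i}}$ of $v$ contains $\sqrt{-1}$, i.e., when $p_i^{f_i} \equiv 1 \pmod 4$.

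The key step, and what I expect to be the main obstacle, will be verifying that the residue degree $f_i$ is odd; given $p_i \equiv 3 \pmod 4$, this then yields $p_i^{f_i} \equiv 3 \pmod 4$, and hence $(\alpha_i, -1)_v \neq 1$. To see that $f_i$ is odd, observe that because $L_i/\Q$ is totally ramified at $p_i$ while $M_i/\Q$ is unramified at $p_i$, the inertia at $v$ is captured entirely by the $L_i$-factor of $\Gal(k/\Q) = \Gal(L_i/\Q) \times \Gal(M_i/\Q)$, and $f_i$ equals the residue degree of $p_i$ in $M_i/\Q$. The latter is the order of Frobenius in $\Gal(M_i/\Q) = \prod_{j \neq i}(\Z/p_j\Z)^*/\{\pm 1\}$, so it divides $|\Gal(M_i/\Q)| = \prod_{j \neq i}(p_j-1)/2$, a product of odd integers (since every $p_j \equiv 3 \pmod 4$). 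This forces $f_i$ to be odd and completes the plan.
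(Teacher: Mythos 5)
Your proof is correct, but it takes a genuinely different route from the paper's. You argue locally: you locate a place $v$ of $k$ above $p_i$, show that $\alpha_i=(\zeta_{p_i}-\zeta_{p_i}^{-1})^2$ has odd valuation there while the other $\alpha_j$ and $-1$ are units, and then decide the Hilbert symbol $(\alpha_i,-1)_v$ by computing the norm group of the ramified quadratic extension $k_v(\sqrt{\alpha_i})/k_v$; the whole question then reduces to the parity of the residue degree $f_i$, which you correctly extract from the splitting $\Gal(k/\bbQ)=\Gal(L_i/\bbQ)\times\Gal(M_i/\bbQ)$ with $L_i/\bbQ$ totally ramified and $M_i/\bbQ$ unramified at $p_i$, so that $f_i$ divides the odd number $\prod_{j\neq i}(p_j-1)/2$. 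The paper instead stays global and more elementary: it reduces splitness of $(T,-1)$, $T=\prod_{i\in S}\alpha_i$, to $T$ being a norm from $k(\sqrt{-1})/k$, pushes this down via $N_{k/\bbQ}$ to the statement that $N_{k/\bbQ}(T)=(-1)^{|S|}\prod p_i^{[k:k_i]}$ is a sum of two rational squares, and contradicts Fermat's two-squares theorem because each $p_i\equiv 3\pmod 4$ occurs to an odd exponent $[k:k_i]$. Your approach uses more machinery (local fields, ramification theory, a norm-group computation) but pinpoints exactly which completions detect the nontriviality and generalizes more readily; the paper's argument trades that precision for a self-contained reduction to a classical fact about integers. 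Note the parallel: your key parity check on $f_i$ plays exactly the role of the paper's observation that $[k:k_i]$ is odd, and both ultimately rest on $(p_j-1)/2$ being odd for $p_j\equiv 3\pmod 4$.
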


\begin{proof}[Proof of Lemma~\ref{lem.quaternion}]
After renumbering $p_1, \dots, p_l$, it suffices to show that
$A_1 \otimes_k \dots \otimes_k A_s$ is not split for any
$s = 1, \dots, l$.
Since $[(a, c)] \otimes [(b, c)] = [(ab, c)]$ in $\Br(k)$,
we see that
\[ [A_1 \otimes_k \dots \otimes_k A_s] = [(T, -1)] \, . \]
Here $[A]$ denote the Brauer class of a central simple algebra $A/k$,
and
\[T := \prod_{i = 1}^s (\zeta_{p_i} - \zeta_{p_i}^{-1})^2 \in k \, . \]
Now recall that the quaternion algebra $(T, -1)$ splits over $k$ if
and only if $T$ is a norm in $k(\sqrt{-1})/k$; see,
e.g.,~\cite[Theorem 2.7]{lam}.
Thus it suffices to show that $T$ is not a norm in  $k(\sqrt{-1})/k$.
Assume the contrary: $T = N_{k(\sqrt{-1})/k}(x)$ for some $x \in k(\sqrt{-1})$.
Taking the norm in $k/\bbQ$ on both sides, we see that
\[ N_{k/\bbQ}(T) = N_{k/\bbQ}(N_{k(\sqrt{-1})/k}(x)) =
N_{k(\sqrt{-1})/\bbQ}(x) =
N_{\bbQ(\sqrt{-1})/\bbQ}(N_{k(\sqrt{-1})/\bbQ(\sqrt{-1})}(x)) \]
is a norm in $\bbQ(\sqrt{-1})/\bbQ$. Thus it suffices to prove
the following

\smallskip
Claim: $N_{k/\bbQ}(T)$ is not a norm in $\bbQ(\sqrt{-1})/\bbQ$,
i.e., is not the sum of two rational squares.

\smallskip
Our proof of this claim will be facilitated by the following diagram.
\[ \xymatrix{ &  & F \ar@{-}[d]^{\; 2^l} \ar@{-}[ddl] & = &                                       \bbQ(\zeta_{p_1}, \dots, \zeta_{p_l}) \\
         &  & k \ar@{-}[dd]^{\; \text{odd}} & =  &
\bbQ(\zeta_{p_1} + \zeta_{p_1}^{-1}, \dots,
                            \zeta_{p_l} + \zeta_{p_l}^{-1}) \\
         &  \bbQ(\zeta_{p_i}) \ar@{-}[dr]^{2} &  &  & \\
(\zeta_{p_i} - \zeta_{p_i}^{-1})^2 & \in  & k_i
\ar@{-}[d]^{\; \frac{p_i-1}{2}, \; \text{odd}} & = &
 \bbQ(\zeta_{p_i} + \zeta_{p_i}^{-1}) \\
                        &  & \bbQ &   &
                     } \]
We now proceed to compute $N_{k/\bbQ}(T)$:
\[ N_{k/\bbQ}(T) =
\prod_{i = 1}^s N_{k/\bbQ} ((\zeta_{p_i} - \zeta_{p_i}^{-1})^2) =
\prod_{i = 1}^s N_{k_i/\bbQ} ((\zeta_{p_i} - \zeta_{p_i}^{-1})^2)^{[k:k_i]}
\, , \]
and since
$(\zeta_{p_i} - \zeta_{p_i}^{-1})^2 = - N_{\bbQ(\zeta_{p_i})/k_i}(\zeta_{p_i} - \zeta_{p_i}^{-1})$, for each $i$,
\begin{gather*}
N_{k_i/\bbQ} ((\zeta_{p_i} - \zeta_{p_i}^{-1})^2) =
N_{k_i/\bbQ}
(- N_{\bbQ(\zeta_{p_i})/k_i}(\zeta_{p_i} - \zeta_{p_i}^{-1})) = \\
(-1)^{[k_i : \bbQ]}
N_{\bbQ(\zeta_{p_i})/\bbQ}(\zeta_{p_i} - \zeta_{p_i}^{-1}) =
- p_i.
\end{gather*}
The last equality follows from the identities
$\prod_{j=1}^{p_i-1} \zeta_{p_i}^j = 1$ and
$\prod_{j = 1}^{p_i-1} (1 - \zeta_{p_i}^j) = p_i$, where the latter
is obtained by substituting $t = 1$ into
$\prod_{j = 1}^{p_i-1} (t - \zeta_{p_i}^j) = t^{p_i-1} + \dots + t + 1$.
In summary,
\[ N_{k_i/\bbQ}(T) = (-1)^s \prod_{i = 1}^s p_i^{[k:k_i]} \in \bbZ. \]
If $s$ is odd then
$N_{k_i/\bbQ}(T) < 0$, and the claim is obvious. In the case where
$s$ is even, recall that by a classical theorem of Fermat,
a positive integer $n$
can be written as a sum of two rational squares if and only if
it can be written as a sum of two integer squares if and only if
every prime $p$ which is $\equiv 3 \pmod{4}$ occurs to an even power in
the prime decomposition of $n$.
Since each $p_i$ is $\equiv 3 \pmod{4}$ and each $[k_i:k]$ is odd,
we conclude that $N_{k_i/\bbQ}(T)$ cannot be written
as a sum of two rational squares for any $s = 1, \dots, l$.
This completes the proof of the claim and thus
of Lemma~\ref{lem.quaternion} and Proposition~\ref{prop.brauer-ed}.
\end{proof}

\section
{Computation of canonical $p$-dimension}
\label{sect.incompressible}

This section aims to determine canonical $p$-dimension of a broad
class of Weil transfers of generalized Severi-Brauer varieties.
Here $p$ is a fixed prime integer.  The base field $k$ is allowed
to be of arbitrary characteristic.

Let $Z/k$ be a finite Galois field extension (not necessarily abelian).
We will work with Chow motives with coefficients in a finite
field of $p$ elements; see~\cite[\S64]{EKM}.
For a motive $M$ over $Z$, $\WR_{Z/k}M$ is the motive over $k$ given
by the Weil transfer of $M$ introduced in \cite{karpenko00}.
Although the coefficient ring is assumed to be $\Z$ in~\cite{karpenko00},
and the results obtained there over $\Z$ do not formally imply similar
results for other coefficients, the proofs go through for an arbitrary
coefficient ring.

For any finite separable field extension $K/k$ and a motive $M$
over $K$, the {\em corestriction} of $M$ is a well-defined motive
over $k$; see \cite{outer}.

\begin{lem}
\label{lemma}
Let $Z/k$ be an arbitrary finite Galois field extension
and let $M_1,\dots,M_m$ be $m \geqslant 1$ motives over $Z$.
Then the motive $\WR_{Z/k}(M_1\oplus\dots\oplus M_m)$ decomposes
in a direct sum
$$
\WR_{Z/k}(M_1\oplus\dots\oplus M_m)
\simeq\WR_{Z/k}M_1\oplus\dots\oplus\WR_{Z/k}M_m\oplus N,
$$
where $N$ is a direct sum of corestrictions to $k$ of motives
over fields $K$ with $k\subsetneq K\subset Z.
$
\end{lem}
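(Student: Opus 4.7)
The plan is to apply the Weil transfer to the direct sum by expanding tensor products, then to partition the resulting decomposition into Galois orbits. Let $G := \Gal(Z/k)$. From the construction of the Weil transfer of motives recalled in~\cite{karpenko00}, there is a natural $G$-equivariant isomorphism
\[ (\WR_{Z/k} M)_Z \;\simeq\; \bigotimes_{\sigma \in G} \sigma^* M \]
for any $Z$-motive $M$, where $G$ acts on the right-hand side by permuting the tensor factors; this is precisely the descent datum recovering $\WR_{Z/k} M$ from its base change to $Z$. Applying this to $M = M_1\oplus\dots\oplus M_m$ and distributing the tensor product over the direct sum yields the $G$-equivariant identification
\[ \bigl(\WR_{Z/k}(M_1\oplus\dots\oplus M_m)\bigr)_Z \;\simeq\; \bigoplus_{f\colon G\to\{1,\dots,m\}} \bigotimes_{\sigma\in G} \sigma^*M_{f(\sigma)}, \]
with $G$ acting on the index set of functions in the natural way.

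Next I would partition the indexing set of functions into $G$-orbits. The orbit of a constant function $f\equiv i$ is a singleton, and its summand is exactly $(\WR_{Z/k}M_i)_Z$. Every non-constant $f$ has a proper stabilizer $H := \Stab(f)\subsetneq G$, which by Galois theory corresponds to an intermediate field $k\subsetneq K := Z^H \subset Z$. The summand $\bigotimes_\sigma\sigma^*M_{f(\sigma)}$ is then $H$-stable and descends to a motive $N_f$ over $K$; summing over the $G$-orbit of $f$ produces exactly the base change to $Z$ of $\cores_{K/k} N_f$, by the very construction of the corestriction in~\cite{outer}. Reassembling orbit-by-orbit then gives the desired decomposition of $\WR_{Z/k}(M_1\oplus\dots\oplus M_m)$ over $k$ as a sum of $\WR_{Z/k}M_i$'s plus corestrictions of motives from proper intermediate extensions.

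The main obstacle, and the real content of the argument, is to verify that this orbit-indexed, manifestly $G$-equivariant decomposition over $Z$ genuinely descends to a motivic direct-sum decomposition over $k$. Concretely, one must check that the projector onto each orbit-summand is $G$-invariant as a correspondence on $(\WR_{Z/k}(\bigoplus_i M_i))_Z$, hence descends to a projector defined over $k$, and that the descended summand is the indicated Weil transfer or corestriction. Since the cycles defining $\WR_{Z/k}$ and $\cores_{K/k}$ in~\cite{karpenko00} and~\cite{outer} are engineered precisely so that $G$-invariant, respectively $H$-invariant, pieces of their base changes descend, this reduces to a bookkeeping verification requiring no new descent input. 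The remark at the start of this section that the constructions and proofs of~\cite{karpenko00} extend verbatim to an arbitrary coefficient ring removes the only potential concern about working with $\F_p$-coefficients.
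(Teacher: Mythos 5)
Your argument is correct and is essentially the one the paper intends: the paper's proof simply refers to the argument of Lemma~2.1 of the cited work of Karpenko on quadratic Weil transfers (which is exactly this expansion of $(\WR_{Z/k}(\bigoplus_i M_i))_Z$ into a sum indexed by functions $G\to\{1,\dots,m\}$ sorted into Galois orbits, with the descent of the orbit projectors supplied by the constructions of the Weil transfer and corestriction) and handles $m\geqslant 3$ by induction, whereas you carry out the orbit decomposition for all $m$ at once. The content is the same; your version just makes explicit, for a general Galois group, the descent step that the quadratic case hides.
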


\begin{proof}
For $m=1$ the statement is void.
For $m=2$ use the same argument as
in \cite[Proof of Lemma 2.1]{qweil}.
For $m \geqslant 3$ argue by induction.
\end{proof}

Now recall from Section~\ref{sect.cd} that a $k$-variety $X$
is called incompressible if $\cd(X) = \dim(X)$ and
$p$-incompressible if $\cd_p(X) = \dim(X)$.

\begin{thm}
\label{main}
Let $p$ be a prime number, $Z/k$ a finite Galois field extension of degree $p^r$ for some $r\geqslant 0$,
$D$ a balanced central division $Z$-algebra of degree $p^n$ for some
$n\geqslant 0$, and $X$ the generalized  Severi-Brauer variety $\SB(D,p^i)$ of $D$ for some $i=0,1,\dots,n$.
Then the $k$-variety $\WR_{Z/k}X$, given by the Weil transfer of $X$, is $p$-incompressible.
\end{thm}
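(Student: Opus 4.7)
The plan is to combine the motivic characterization of $p$-incompressibility via upper motives with the Weil-transfer-of-motives formalism from \cite{karpenko00}, reducing the theorem to Karpenko's already established $p$-incompressibility of the generalized Severi-Brauer variety $\SB(D, p^i)$ over $Z$ (see \cite{qweil}).

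Set $Y := \WR_{Z/k}(X)$ and $d := \dim Y = p^r \cdot p^i(p^n - p^i)$. The variety $Y$ is geometrically split and satisfies the nilpotence principle, since it is the Weil transfer of a projective homogeneous variety. By the standard motivic criterion, the $p$-incompressibility of $Y$ is equivalent to the upper indecomposable summand $U(Y)$ of the Chow motive $M(Y)$ (with $\bbF_p$ coefficients) having dimension $d$. Throughout the argument I would use the fundamental identification $M(\WR_{Z/k} X) \simeq \WR_{Z/k}(M(X))$ from \cite{karpenko00}, which translates the question into the motivic category over $Z$.

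By Karpenko's theorem, $X = \SB(D, p^i)$ is $p$-incompressible, so its upper motive $U(X)$ is a direct summand of $M(X)$ of dimension $\dim X = p^i(p^n - p^i)$. Writing $M(X) \simeq U(X) \oplus M'$ and applying Lemma~\ref{lemma}, one obtains a decomposition
\[ M(Y) \simeq \WR_{Z/k}(U(X)) \oplus \WR_{Z/k}(M') \oplus N, \]
where $N$ is a direct sum of corestrictions from proper intermediate extensions $k \subsetneq K \subsetneq Z$. The summand $\WR_{Z/k}(U(X))$ has dimension $[Z:k] \cdot \dim U(X) = d$, so to finish it suffices to show that $\WR_{Z/k}(U(X))$ is indecomposable in the category of Chow motives over $k$: then $U(Y) = \WR_{Z/k}(U(X))$, hence $\dim U(Y) = d$, completing the proof.

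To verify this indecomposability I would base change to $Z$, where
\[ \WR_{Z/k}(U(X))_Z \simeq \bigotimes_{\alpha \in \Gal(Z/k)} U({}^\alpha X). \]
The balanced hypothesis on $D$ is crucial here: it ensures that each conjugate ${}^\alpha D$ is Brauer-equivalent to a tensor power of $D$, and since ${}^\alpha D$ also has degree $p^n$, it is itself a central division algebra whose generalized Severi-Brauer variety carries the same upper motive, $U({}^\alpha X) \simeq U(X)$, for every $\alpha \in \Gal(Z/k)$. The base change thus becomes the tensor power $U(X)^{\otimes p^r}$, and a standard Krull--Schmidt and nilpotence argument, combined with Galois-equivariant descent, yields the indecomposability of $\WR_{Z/k}(U(X))$ over $k$. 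The main obstacle will be this final indecomposability step, where one must fully exploit the balanced hypothesis and carefully control the motivic summands of a tensor power of indecomposable upper motives under descent from $Z$ to $k$; a secondary technical point is confirming that the corestriction summands $N$ from Lemma~\ref{lemma} all have dimension strictly less than $d$, so that they cannot interfere with identifying the top-dimensional summand $\WR_{Z/k}(U(X))$ as $U(Y)$.
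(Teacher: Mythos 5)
Your reduction has a genuine gap at its central step. You propose to show that $\WR_{Z/k}(U(X))$ is indecomposable, so that it must equal $U(\WR_{Z/k}X)$ and force $\dim U(\WR_{Z/k}X)=d$. But the Weil transfer of an indecomposable upper motive is in general \emph{not} indecomposable: by \cite{outer}, $\WR_{Z/k}U(X)$ decomposes into $U(\WR_{Z/k}X)$ together with shifts of upper motives of $\WR_{Z/k}\SB(D,p^j)$ for $j<i$ and corestrictions of motives from proper intermediate fields $k\subsetneq K\subset Z$. So the object you want to prove indecomposable typically is not, and once it decomposes your dimension count collapses: $U(\WR_{Z/k}X)$ is then a proper summand of $\WR_{Z/k}(U(X))$ and nothing forces it to reach dimension $d$. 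Relatedly, the ``standard Krull--Schmidt and nilpotence argument'' you invoke for the tensor power $U(X)^{\otimes p^r}$ over $Z$ does not exist as a standard tool: already for an anisotropic conic $C$ one has $M(C)^{\otimes 2}\simeq M(C)\oplus M(C)(1)$, so the tensor power decomposes heavily over $Z$, and the whole problem is to control how the Galois descent data reassembles these pieces over $k$. That is precisely the content of the theorem, not a routine verification; deferring it to a standard argument leaves the proof without its core.

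For comparison, the paper's proof does not attempt to prove any such indecomposability directly. It runs an induction on $r+n$: it passes to the function field $k'$ of $\WR_{Z/k}\SB(D,p^{n-1})$, where the index of $D$ drops to $p^{n-1}$, decomposes $M(\WR_{Z/k}X)_{k'}$ into $p$ shifts of $U(\WR_{Z'/k'}X')$ (plus controlled remainders) via Lemma~\ref{lemma} and \cite{karpenko00}, and then shows that the number $l$ of these shifts absorbed into $U(\WR_{Z/k}X)_{k'}$ satisfies $l\mid p$ and $l>1$. The inequality $l>1$ is obtained from the lower bound $\dim U(\WR_{Z/k}X)\geqslant p^{r-1}p^i(p^n-p^i)$, proved by restricting to a degree-$p$ subextension $\tilde k/k$ of $Z/k$ and using the balanced hypothesis to identify $(\WR_{Z/k}X)_{\tilde k}$ with a variety equivalent to $\WR_{Z/\tilde k}X$, which is $p$-incompressible by induction. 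Your correct observations (the identification $M(\WR_{Z/k}X)\simeq\WR_{Z/k}M(X)$, the base case from \cite{upper}, and the fact that $U({}^\alpha X)\simeq U(X)$ for balanced $D$) all appear in the paper's argument, but they are the starting point rather than the proof.
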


Note that in the case, where $Z/k$ is a quadratic Galois extension,
$D$ is balanced if the $k$-algebra given by the norm of $D$ is
Brauer-trivial; $^\alpha\! D$ for $\alpha \ne 1$ is then opposite to $D$.
In this special case Theorem~\ref{main} was proved in \cite[Theorem 1.1]{qweil}.

\begin{proof}[Proof of Theorem \ref{main}]
In the proof we will use Chow motives
with coefficients in a finite field of $p$ elements.
Therefore the Krull-Schmidt principle holds for direct summands
of motives of projective homogeneous varieties by \cite{MR2264459}
(see also \cite{upper}).

We will prove Theorem \ref{main} by induction on $r+n$.
The base case, where $r+n=0$, is trivial.
Moreover, in the case where $r=0$ (and $n$ is arbitrary),
we have $Z=k$ and thus $\WR_{Z/k}X=X$
is $p$-incompressible  by \cite[Theorem 4.3]{upper}.
Thus we may assume that $r \geqslant 1$ from now on.

If $i=n$, then $X=\Spec Z$, $\WR_{Z/k}X=\Spec k$, and the statement
of Theorem \ref{main} is trivial.  We will thus assume that $i \leqslant n-1$
and, in particular, that $n \geqslant 1$.

Let $k'$ be the function field of the variety $\WR_{Z/k}\SB(D,p^{n-1})$.
Set $Z':=k'\otimes_kZ$. By Proposition~\ref{prop.generic}(b),
the index of the central simple $Z'$-algebra
$D_{Z'}=D\otimes_Z Z'=D\otimes_kk'$ is $p^{n-1}$. Thus
there exists a central division $Z'$-algebra $D'$ such that
the algebra of ($p\times p$)-matrices over $D'$ is isomorphic to $D_{Z'}$.
Let $X'=\SB(D',p^i)$.
By \cite[Theorem 10.9 and Corollary 10.19]{MR1758562} (see also \cite{MR2110630})
and \cite[Theorems 3.8 and 4.3]{upper}, the motive of the variety $X_{Z'}$ decomposes in a direct sum
$$
M(X_{Z'})\simeq U(X')\oplus U(X')(p^{i+n-1})\oplus U(X')(2p^{i+n-1})\oplus\dots
\oplus U(X')((p-1)p^{i+n-1})\oplus N,
$$
where $U(X')$ is the upper motive of $X'$ and
$N$ is a direct sum of shifts of upper motives of the varieties
$\SB(D',p^j)$ with $j<i$.
Therefore, by Lemma \ref{lemma} and \cite[Theorem 5.4]{karpenko00}, the motive of the variety
$(\WR_{Z/k}X)_{k'}\simeq\WR_{Z'/k'}(X_{Z'})$ decomposes in a direct sum
\begin{multline}
\label{dec1}
M(\WR_{Z/k}X)_{k'}\simeq \WR_{Z'/k'}U(X')\oplus \WR_{Z'/k'}U(X')(p^{r+i+n-1})\oplus \\ \WR_{Z'/k'}U(X')(2p^{r+i+n-1})\oplus  \dots\oplus \WR_{Z'/k'}U(X')((p-1)p^{r+i+n-1})\oplus N\oplus N',
\end{multline}
where now $N$ is a direct sum of shifts of $\WR_{Z'/k'}U(\SB(D',p^j))$ with $j<i$, and
$N'$ is a direct sum of corestrictions of motives over fields $K$ with
$k'\subsetneq K\subset Z'$.
By the induction hypothesis, the variety $\WR_{Z'/k'}X'$ is $p$-incompressible.
By \cite[Theorem 5.1]{canondim}, this means that no positive shift of the motive $U(\WR_{Z'/k'}X')$ is a direct summand of the motive of $\WR_{Z'/k'}X'$.
It follows by \cite{outer} that $\WR_{Z'/k'}U(X')$ is a direct sum of $U(\WR_{Z'/k'}X')$, of $U(\WR_{Z'/k'}\SB(D',p^j))$ with $j<i$, and
of corestrictions of motives over fields $K$ with $k'\subsetneq K\subset Z'$.
Therefore we may exchange $\WR_{Z'/k'}$ with $U$ in (\ref{dec1}) and get
a decomposition of the form
\begin{multline}
\label{dec2}
M(\WR_{Z/k}X)_{k'}\simeq U(\WR_{Z'/k'}X')\oplus U(\WR_{Z'/k'}X')(p^{r+i+n-1})\oplus \\ U(\WR_{Z'/k'}X')(2p^{r+i+n-1})\oplus  \dots\oplus U(\WR_{Z'/k'}X')((p-1)p^{r+i+n-1})\oplus N\oplus N',
\end{multline}
where $N$ is now a direct sum of shifts of
some $U(\WR_{Z'/k'}\SB(D',p^j))$ with $j<i$, and
$N'$ is a direct sum of corestrictions of motives over fields $K$ with
$k'\subsetneq K\subset Z'$.
Note that the first $p$ summands of decomposition (\ref{dec2}) (that is, all but the last two) are shifts of an indecomposable motive; moreover, no shift of this motive is isomorphic to a summand of $N$ or of $N'$.
Since the variety $\WR_{Z'/k'}X'$ is $p$-incompressible, we have
$$
\dim U(\WR_{Z'/k'}X')=\dim \WR_{Z'/k'}X'=[Z':k']
\cdot\dim X'=p^r\cdot p^i(p^{n-1}-p^i) \, .
$$
(We refer the reader to~\cite[Theorem 5.1]{canondim} for the definition
of the dimension of the upper motive, as well as its relationship
to the dimension and $p$-incompressibility of the corresponding variety).
Note that the shifting number of the $p$-th summand
in (\ref{dec2}) plus $\dim\WR_{Z'/k}X'$ equals $\dim\WR_{Z/k}X$:
$$
(p-1)p^{r+i+n-1}+p^rp^i(p^{n-1}-p^i)=p^rp^i(p^n-p^i).
$$

We want to show that the variety $\WR_{Z/k}X$ is $p$-incompressible.
In other words, we want to show that $\dim U(\WR_{Z/k}X)=\dim \WR_{Z/k}X$.
Let $l$ be the number of shifts of $U(\WR_{Z'/k'}X')$ contained in the complete decomposition of
the motive $U(\WR_{Z/k}X)_{k'}$.
Clearly, $1 \leqslant l \leqslant p$ and
it suffices to show that $l=p$ because in this case the $p$-th summand of (\ref{dec2})
is contained in the complete decomposition of $U(\WR_{Z/k}X)_{k'}$.

The complete motivic decomposition of $\WR_{Z/k}X$ contains several shifts of $U(\WR_{Z/k}X)$.
Let $N$ be any of the remaining (indecomposable) summands.
Then, by \cite{outer}, $N$ is either a shift of the upper motive $U(\WR_{Z/k}\SB(D,p^j))$ with some $j<i$ or a corestriction to $k$ of a motive over a field $K$ with $k\subsetneq K\subset Z$.
It follows that the complete decomposition of $N_{k'}$ does not contain any shift of $U(\WR_{Z'/k'}X')$.
Therefore $l$ divides $p$, that is, $l=1$ or $l=p$, and we only need to show that $l\ne1$.

We claim that $l>1$ provided that $\dim U(\WR_{Z/k}X)>\dim U(\WR_{Z'/k'}X')$.
Indeed, by \cite[Proposition 2.4]{sgog}, the complete decomposition of $U(\WR_{Z/k}X)_{k'}$ contains as a summand the motive $U(\WR_{Z'/k'}X')$ shifted by the difference $\dim U(\WR_{Z/k}X)-\dim U(\WR_{Z'/k'}X')$.
Therefore, in order to show that $l\ne1$ it is enough to show that
$$
\dim U(\WR_{Z/k}X)>\dim U(\WR_{Z'/k'}X').
$$

We already know the precise value of the dimension on the right, so we only need to find a good enough lower bound on the dimension on the left.
This will be given by $\dim U((\WR_{Z/k}X)_{\tilde{k}})$, where $\tilde{k}/k$ is a degree $p$ Galois field subextension of $Z/k$.
We can determine the latter dimension using the induction hypothesis.

Indeed, since $\WR_{Z/k}X\simeq\WR_{\tilde{k}/k}\WR_{Z/\tilde{k}}X$, the variety $(\WR_{Z/k}X)_{\tilde{k}}$ is isomorphic to
$$
(\WR_{Z/k}X)_{\tilde{k}}\simeq\Prod_{\tilde{\alpha}\in\tilde{\Gamma}}{}^{\tilde{\alpha}}\!\WR_{Z/\tilde{k}}X\simeq
\WR_{Z/\tilde{k}}\Prod_{\tilde{\alpha}\in\tilde{\Gamma}}{}^\alpha\!X,
$$
where $\Gamma$ is the Galois group of $Z/k$,
$\tilde{\Gamma}$ is the Galois group of $\tilde{k}/k$,
and $\alpha\in \Gamma$ is a representative of
$\tilde{\alpha}\in\tilde{\Gamma}$
(see~\cite[\S2.8]{borel-serre}).  Since $D$ is balanced,
the product
$\Prod_{\tilde{\alpha}\in\tilde{\Gamma}}{}^\alpha\!X$
is equivalent to $X$. Hence, by Lemma~\ref{lem.Nishimura} these
varieties have the same canonical $p$-dimension
(i.e., the dimensions of their upper motives coincide).
The latter variety is $p$-incompressible
by the induction hypothesis.  Consequently,
$$
\dim U(\WR_{Z/k}X) \geqslant \dim U((\WR_{Z/k}X)_{\tilde{k}})=\dim\WR_{Z/\tilde{k}}X=p^{r-1}\cdot p^i(p^n-p^i).
$$
The lower bound $p^{r-1}\cdot p^i(p^n-p^i)$ on $\dim U(\WR_{Z/k}X)$ thus obtained is good enough for our purposes, because
$$
p^{r-1}\cdot p^i(p^n-p^i)>p^r\cdot p^i(p^{n-1}-p^i)=\dim U(\WR_{Z'/k'}X').
$$
This completes the proof of Theorem \ref{main}.
\end{proof}

The following example, due to A. Merkurjev, shows that
Theorem~\ref{main} fails if $D$ is not assumed to be balanced.

\begin{example}
\label{example1}
Let $L$ be a field containing a primitive $4$-th root of unity.
Let $Z$ be the field $Z:=L(x,y,x',y')$ of rational functions over $L$
in four variables $x,y,x',y'$.
Consider the degree $4$ cyclic central division $Z$-algebras
$C:=(x,y)_4$ and $C':=(x',y')_4$.
Let $k\subset Z$ be the subfield $Z^\alpha$ of the elements in $Z$
fixed under the $L$-automorphism $\alpha$ of $Z$ exchanging $x$ with
$x'$ and $y$ with $y'$.
The field extension $Z/k$ is then Galois of degree $2$, and the
algebra $C'$ is conjugate to $C$.

The index of the tensor product of $Z$-algebras $C\otimes
{C'}^{\otimes2}$ is $8$.
Let $D/Z$ be the underlying (unbalanced!) division algebra of degree $8$.
The subgroup of the Brauer group $\Br(Z)$ generated by the classes of
$D$ and $^\alpha\!D=C'\otimes C^{\otimes 2}$ coincides with the
subgroup generated by the classes of $C$ and $^\alpha\!C=C'$.
Therefore the
varieties
$X_1 := \WR_{Z/k}\SB(D)$ and $X_2 := \WR_{Z/k}\SB(C)$ are
equivalent.
Thus, by Lemma \ref{lem.Nishimura},
\[ \cd(X_1) = \cd(X_2) \leqslant \dim(X_2) < \dim(X_1) \]
and consequently, $X_1$ is compressible (and in particular,
$2$-compressible).
\end{example}

\section{Some consequences of Theorem~\ref{main}}
\label{sect.formula}

Theorem \ref{main} makes it possible to determine the canonical
$p$-dimension of the Weil transfer in the situation, where
the degrees of $Z/k$ and of $D$ are not necessarily $p$-powers.

\begin{cor}
\label{cormain}
Let $Z/k$ be a finite Galois field extension and $D$ a balanced
central division $Z$-algebra. 
For any positive integer $m$ dividing $\deg(D)$, one has
$$
\cd_p\WR_{Z/k}\SB(D,m)=\dim\WR_{Z/k'}\SB(D',m')=[Z:k']\cdot m'(\deg D'-m'),
$$
where $m'$ is the $p$-primary part of $m$ (i.e., the highest power of
$p$ dividing $m$), $D'$ is the $p$-primary component of $D$, and
$k'=Z^{\Gamma_p}$, where $\Gamma_p$ is a Sylow $p$-subgroup of
$\Gamma:=\Gal(Z/k)$
(so that $[Z:k']$ is the $p$-primary part of $[Z:k]$).
\end{cor}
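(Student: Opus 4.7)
The plan is to reduce Corollary \ref{cormain} to Theorem \ref{main} in two steps.

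First, I would use that $[k':k]=[\Gamma:\Gamma_p]$ is coprime to $p$ to obtain $\cd_p\WR_{Z/k}\SB(D,m) = \cd_p\bigl((\WR_{Z/k}\SB(D,m))_{k'}\bigr)$; canonical $p$-dimension is preserved under base change along finite prime-to-$p$ extensions. The base-change formula for Weil restriction along $k \subset k' \subset Z$ then gives
$$
(\WR_{Z/k}\SB(D,m))_{k'} \simeq \Prod_{\sigma \in \Gamma/\Gamma_p} \WR_{Z/k'}\SB({}^\sigma\! D, m),
$$
a product indexed by the $[k':k]$ cosets of $\Gamma_p$ in $\Gamma$. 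Each conjugate ${}^\sigma\! D$ is a central division $Z$-algebra of degree $\deg D$, balanced over $k'$: the map $\sigma \mapsto t_\sigma \in (\Z/\exp(D)\Z)^\times$ from the balancedness condition is a group homomorphism, and $t_\sigma$ is automatically a unit because ${}^\sigma\! D$ has the full index $\deg D$.

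Next I would show that this product is $p$-equivalent over $k'$ to the single variety $Y := \WR_{Z/k'}\SB(D',m')$. For any field $F/k'$, Proposition \ref{prop.generic}(a) characterizes $F$-points of the product by the conditions $\ind({}^\sigma\! D \otimes_Z F_i) \mid m$ for every $\sigma$ and every component $F_i$ of $F \otimes_{k'} Z$. Splitting into $p$-primary and prime-to-$p$ parts: since ${}^\sigma\! D' \sim (D')^{\otimes t_\sigma}$ with $t_\sigma$ coprime to $p$, and since for any $p$-primary central simple algebra $B$ one has $\ind(B^{\otimes t}) = \ind(B)$ whenever $\gcd(t,p)=1$, it follows that $\ind({}^\sigma\! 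D' \otimes_Z F_i) = \ind(D' \otimes_Z F_i)$. The $p$-primary half of the conditions thus reduces to $\ind(D' \otimes_Z F_i) \mid m'$ uniformly in $\sigma$, which is precisely the condition for $F$-points of $Y$. The remaining prime-to-$p$ conditions $\ind({}^\sigma\! D'' \otimes_Z F_i) \mid m''$ can be arranged by passing to a prime-to-$p$ extension of $F$ that splits all the algebras ${}^\sigma\! D'' \otimes_Z F_i$, each of which has prime-to-$p$ index. From the resulting equivalence of detection functors up to prime-to-$p$ extensions, I would extract correspondences of degree coprime to $p$ in both directions and conclude $p$-equivalence via Lemma \ref{lem.Nishimura}(b).

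Finally, since $D'$ is balanced over $k'$ as the $p$-primary component of the balanced algebra $D$, $[Z:k']$ is a power of $p$, and $\deg D' = p^n$, Theorem \ref{main} applies to $Y$ and yields $\cd_p(Y) = \dim(Y) = [Z:k']\cdot m'(\deg D' - m')$. Chaining the two reductions produces the desired formula. The principal obstacle will be the $p$-equivalence in the middle step: translating the detection-functor equivalence (valid up to prime-to-$p$ extensions) into genuine correspondences of prime-to-$p$ degree. The delicate point is that a splitting field of $D''$ over $Z$ is not in general a prime-to-$p$ extension of $F$, so the argument must realize the prime-to-$p$ correspondences via $0$-cycles on generalized Severi-Brauer varieties of prime-to-$p$ index algebras, which do carry $0$-cycles of degree coprime to $p$.
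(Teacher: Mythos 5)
Your overall architecture matches the paper's: reduce to $k'$ by a prime-to-$p$ base change, compare with $\WR_{Z/k'}\SB(D',m')$, and finish with Theorem~\ref{main} (your verification that $D'$ is balanced over $k'$ and that Theorem~\ref{main} applies is fine). The first and last steps are correct. The problem is exactly where you predict it: the middle step, and your proposed repair does not close it. To produce a prime-to-$p$ correspondence from $Y=\WR_{Z/k'}\SB(D',m')$ to the product, you need a zero-cycle of degree prime to $p$ on each $\WR_{Z/k'}\SB({}^\sigma\! D,m)$ over $F=k'(Y)$, i.e.\ an $L$-point for some finite extension $L/F$ of degree prime to $p$. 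The varieties ``of prime-to-$p$ index'' you invoke, namely the Severi--Brauer varieties of the prime-to-$p$ components ${}^\sigma\! D''\otimes_Z F_i$, live over the components $F_i$ of $F\otimes_{k'}Z$, which have $p$-power degree over $F$. A prime-to-$p$ zero-cycle on $\SB({}^\sigma\! D''\otimes_Z F_i)$ over $F_i$ does not push down to a prime-to-$p$ zero-cycle on the Weil transfer over $F$: an $L$-point of $\WR_{Z/k'}\SB({}^\sigma\! D,m)$ requires the index conditions over \emph{every} component of $L\otimes_{k'}Z$, and a splitting field of ${}^\sigma\! D''\otimes_Z F_i$ need not be of the form (a component of) $L\otimes_F F_i$ with $[L:F]$ prime to $p$. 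So as written the $p$-equivalence is not established.

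The missing idea is the passage to a $p$-special closure, which is how the paper handles this. After the reduction to $k'$ (where $[Z:k']$ is a $p$-power), replace $k'$ by its $p$-special closure $k'^{(p)}$; this changes neither $\cd_p$ (\cite[Proposition 101.16]{EKM}) nor the index of the $p$-primary algebra $D'$. Over a $p$-special field every component of $F\otimes Z$ is again $p$-special, so every algebra of prime-to-$p$ index over it splits outright; hence $D$ and $D'$ become Brauer-equivalent componentwise, the index conditions ``$\mid m$'' and ``$\mid m'$'' become literally the same condition, and the two detection functors are isomorphic. One then gets a genuine equivalence (rational maps in both directions) and applies Lemma~\ref{lem.Nishimura}(a) — no prime-to-$p$ correspondences need to be constructed by hand. (The same device, applied to $F^{(p)}$, is also the cleanest way to salvage your correspondence argument if you insist on it.) Two smaller points: the paper first disposes of the product of conjugates by noting that, since each ${}^\sigma\! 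D$ is a \emph{power} of $D$, one has $\ind({}^\sigma\! D\otimes_Z F_i)\mid\ind(D\otimes_Z F_i)$, so the product is honestly equivalent to the single factor $\WR_{Z/k'}\SB(D,m)$ — this is easier than the comparison you attempt and needs no prime-to-$p$ extensions. And your parenthetical claim that $t_\sigma$ is ``automatically a unit because ${}^\sigma\! D$ has full index'' is both unjustified (it would require $\ind(D^{\otimes t})<\ind(D)$ whenever $\gcd(t,\exp D)>1$, which is not a general fact) and unnecessary: everywhere you use it, the divisibility $\ind(B^{\otimes t})\mid\ind(B)$ suffices.
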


\begin{proof}
Since the degree $[k':k]$ is prime to $p$, we have
$$
\cd_p\WR_{Z/k}\SB(D,m)=\cd_p (\WR_{Z/k}\SB(D,m))_{k'} \, ;
$$
see \cite[Proposition 1.5(2)]{ASM-ed}.
The $k'$-variety $\WR_{Z/k}\SB(D,m)_{k'}$ is isomorphic to
a product of $\WR_{Z/k'}\SB(D,m)$ with several
varieties of the form
$\WR_{Z/k'}\SB(\tilde{D},m)$ where
$\tilde{D}$ ranges over a set of conjugates of $D$.
Since $D$ is balanced, these algebras $\tilde{D}$ are
Brauer-equivalent to powers of $D$. Thus
the product is equivalent to the
$k'$-variety $\WR_{Z/k'}\SB(D,m)$.
We conclude by Lemma \ref{lem.Nishimura}
that $\cd_p\WR_{Z/k}\SB(D,m)=\cd_p\WR_{Z/k'}\SB(D,m)$.
In the sequel we will replace $k$ by $k'$, so that the degree
$[Z:k]$ becomes a power of $p$.

We may now replace $k$ by its $p$-special closure;
see~\cite[Proposition 101.16]{EKM}. This will not change
the value of $\cd_p(X)$. In other words, we may assume that $k$
is $p$-special. Under this assumption the algebras $D$ and $D'$ become
Brauer-equivalent and consequently, the $k$-varieties $\WR_{Z/k}\SB(D,m)$ and
$\WR_{Z/k}\SB(D',m')$ become equivalent. By Lemma~\ref{lem.Nishimura},
\[ \cd_p\WR_{Z/k}\SB(D,m)=\cd_p\WR_{Z/k}\SB(D',m') . \]
Since the $Z$-algebra $D'$ is balanced over $k$,
Theorem~\ref{main} tells us that $\WR_{Z/k}\SB(D',m')$
is $p$-incompressible. That is,
\[ \cd_p\WR_{Z/k}\SB(D',m') =
\dim (\WR_{Z/k}\SB(D',m')) = [Z:k]\cdot m'(\deg D'-m') \, , \]
and the corollary follows.
\end{proof}

\begin{remark} \label{rem.main2} Corollary~\ref{cormain} can be used
to compute the $p$-canonical dimension of
$\WR_{Z/k}\SB(D,j)$ for any $j = 1, \dots, \deg(D)$, even if $j$ does not
divide $\deg(D)$. Indeed, let $m$ is the greatest common divisor of $j$
and $\deg(D)$. Proposition~\ref{prop.generic}(a) tells us that
for any field extension $K/k$,
$\WR_{Z/k}\SB(D,j)$ has a $K$-point if and only if
$\WR_{Z/k}\SB(D,m)$ has a $K$-point. In other words,
the detection functors for these two varieties are isomorphic.
Consequently,
\[ \text{$\cd(\WR_{Z/k}\SB(D,j)) = \cd(\WR_{Z/k}\SB(D,m))$ and
$\cd_p(\WR_{Z/k}\SB(D,j)) = \cd_p(\WR_{Z/k}\SB(D,m))$,} \]
and the value of
$\cd_p(\WR_{Z/k}\SB(D,m))$ is given by Corollary~\ref{cormain}.
\end{remark}

We now return to the setting of
Sections~\ref{sect.irreducible}--\ref{sect.upper-bounds}.
In particular, $G$ is a finite group, and the base field $k$ is
of characteristic $0$.

\begin{cor} \label{cor.main2}
Let $\chi = m(\chi_1 + \dots + \chi_r) \colon G \to k$
be an irreducible $k$-valued character,
where $\chi_1, \dots, \chi_r$ are absolutely irreducible and
conjugate over $k$, and $m$ divides $m_k(\chi_1) = \dots = m_k(\chi_r)$,
as in Section~\ref{sect.irreducible}.

\smallskip
(a) $\ed_p(\chi) = r' m' (m_k(\chi_1)' - m')$.
Here $x'$ denotes the
$p$-primary part of $x$ (i.e., the highest power of $p$ dividing $x$)
for any integer $x \ge 1$.

\smallskip
(b) If $r$ and $m_k(\chi_1)$ are powers of $p$, then
$\ed_p(\chi) = \ed(\chi) = \dim(X_{\chi}) = rm(m_k(\chi_1) - m)$.
Here $X_{\chi}$ is as in Definition~\ref{def.X_chi}.
\end{cor}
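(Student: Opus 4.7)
The plan is to combine Theorem~\ref{thm.main} (which identifies $\ed(\chi)$ and $\ed_p(\chi)$ with the canonical dimensions of $X_\chi$) with Corollary~\ref{cormain} (which computes the $p$-canonical dimension of a Weil transfer of a generalized Severi--Brauer variety of a balanced division algebra). The hypotheses of Corollary~\ref{cormain} need to be verified from the data; this verification is the substantive content, but each ingredient has already been established in the paper.

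For part (a), first recall that by Definition~\ref{def.X_chi}, $X_\chi = \WR_{Z/k}(\SB(A_\chi, m))$, where $Z$ is the center of $\Env_k(\chi)$ and $A_\chi$ is the underlying division algebra. By Theorem~\ref{thm.schur}(d) applied over $k(\chi) = k$, we have $[Z:k] = r$ and $\deg(A_\chi) = \ind(\Env_k(\chi)) = m_k(\chi_1)$. The integer $m$ divides $m_k(\chi_1) = \deg(A_\chi)$ by the definition of irreducibility of $\chi$ over $k$ (Lemma~\ref{lem.irreducible}). Most importantly, $A_\chi$ is balanced over $k$ by Proposition~\ref{prop.conjugates}. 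Thus Corollary~\ref{cormain} applies and yields
\[
\cd_p(X_\chi) = [Z:k']\cdot m'\bigl(\deg(A_\chi)' - m'\bigr) = r'\, m'\bigl(m_k(\chi_1)' - m'\bigr),
\]
where $k' = Z^{\Gamma_p}$ for a Sylow $p$-subgroup $\Gamma_p$ of $\Gal(Z/k)$, so that $[Z:k'] = r'$, and the $p$-primary component of $A_\chi$ has degree equal to the $p$-primary part of $\deg(A_\chi)$. By Theorem~\ref{thm.main}, $\ed_p(\chi) = \cd_p(X_\chi)$, which gives part (a).

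For part (b), when $r$ and $m_k(\chi_1)$ are both powers of $p$, then $r' = r$ and $m_k(\chi_1)' = m_k(\chi_1)$. Since $m$ divides $m_k(\chi_1)$, which is a $p$-power, $m$ is also a $p$-power and $m' = m$. Substituting into the formula from (a) gives
\[
\ed_p(\chi) = r\, m\bigl(m_k(\chi_1) - m\bigr) = \dim(X_\chi),
\]
using that $\dim \WR_{Z/k}(\SB(A_\chi, m)) = [Z:k]\cdot m(\deg(A_\chi) - m)$. On the other hand, we always have
\[
\ed_p(\chi) \leqslant \ed(\chi) \leqslant \dim(X_\chi),
\]
where the second inequality is Corollary~\ref{cor.upper}. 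Since both ends coincide, all three quantities are equal, proving (b).

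The main step is the invocation of Corollary~\ref{cormain}, and the only real thing to check is that $A_\chi$ is balanced, which is exactly Proposition~\ref{prop.conjugates}; no further obstacle arises since all other identifications ($[Z:k]=r$, $\deg(A_\chi) = m_k(\chi_1)$, divisibility $m \mid \deg(A_\chi)$) are immediate from earlier results.
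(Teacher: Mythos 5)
Your proof is correct and follows essentially the same route as the paper's: part (a) by combining Theorem~\ref{thm.main}, Proposition~\ref{prop.conjugates}, and Corollary~\ref{cormain}, and part (b) by squeezing $\ed(\chi)$ between the value of $\ed_p(\chi)$ from part (a) and the upper bound of Corollary~\ref{cor.upper}. The only difference is that you spell out the identifications $[Z:k]=r$ and $\deg(A_\chi)=m_k(\chi_1)$ via Theorem~\ref{thm.schur}(d), which the paper leaves implicit.
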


\begin{proof} (a) Let $D$ be the underlying division algebra and
$Z/k$ be the center of $\Env_k(\chi)$. By Theorem~\ref{thm.main},
$\ed_p(\chi) = \cd_p (X_{\chi})$. By Proposition~\ref{prop.conjugates},
$D$ is balanced. The desired conclusion now follows from
Corollary \ref{cormain}.

(b) Here $r' = r$, $m_k(\chi_1)' = m_k(\chi)$ and thus $m' = m$.  By part (a),
\[ \dim(X_{\chi}) = rm(m_k(\chi_1) - m) =
\ed_p(\chi) \leqslant \ed(\chi) \, . \]
On the other hand, by Corollary~\ref{cor.upper},
$\ed(\chi) \leqslant rm(m_k(\chi_1) - m)$, and part (b) follows.
\end{proof}

\begin{remark} While a priori
$\ed_p(\chi)$ depends on $k$, $G$, and $\chi$,
Corollary~\ref{cor.main2}(a)
shows that, in fact, $\ed_p(\chi)$ depends only
on the integers $r$, $m$, and $m_k(\chi_1)$. (Here we are
assuming that $\chi$ is irreducible.)
We do not know if the same is true of $\ed(\chi)$.
\end{remark}

\begin{remark} We do not know a formula for the
canonical $p$-dimension of a product of Weil transfers of
generalized Severi-Brauer varieties similar to Corollary~\ref{cormain},
except for~\cite[Theorem 2.1]{km08}, in the case, where $Z = k$ and $m = 1$.
(Note that this special case played a key role in the proof
of Proposition~\ref{prop.brauer-ed}.) For this reason we do not know
how to generalize Corollary~\ref{cor.main2} to the case, where
the character $\chi$ is not $k$-irreducible.
\end{remark}

\section{A variant of a theorem of Schilling}
\label{sect.schilling-ed}

Let $G$ be a $p$-group and $\chi_1$ be an absolutely irreducible
character of $G$. It is well known
that for any field $k$ of characteristic
$0$,
$m_k(\chi_1) = 1$ if $p$ is odd, and $m_k(\chi_1) =1$ or $2$ if $p = 2$.
Following C.~Curtis and I.~Reiner, we will
attribute this theorem to O.~Schilling;
see~\cite[Theorem 74.15]{curtis-reiner2}.
For further bibliographical references,
see~\cite[Corollary 9.8]{yamada2}.

In this section we will use Corollary~\ref{cor.main2}
to prove the following analogous statement,
with the Schur index replaced by the essential dimension.

\begin{prop} \label{prop.schilling-ed}
Let $k$ be a field of characteristic $0$,
$G$ be a $p$-group, and $\chi \colon G \to k$ be
an irreducible character over $k$.

\smallskip
(a) If $p$ is odd then $\ed(\chi) = 0$.

\smallskip
(b) If $p = 2$ then
$\ed_2(\chi) = \ed(\chi) = 0$ or $2^l$ for some integer $l \geqslant 0$.

\smallskip
(c) Moreover, every $l \geqslant 0$ in part (b) can occur with
$k = \bbQ$, for suitable choices of $G$ and $\chi$.
\end{prop}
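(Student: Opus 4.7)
The plan is to deduce (a) and (b) directly from Corollary~\ref{cor.main2}(b) combined with the Schilling theorem recalled at the start of this section, and to handle (c) by exhibiting generalized quaternion groups.

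First I would verify the hypotheses of Corollary~\ref{cor.main2}(b). Write $\chi = m(\chi_1 + \dots + \chi_r)$. Since $G$ is a $p$-group, its exponent $e$ is a power of $p$, so $k(\chi_1) \subset k(\zeta_e)$ sits inside an extension of $k$ of $p$-power degree, forcing $r = [k(\chi_1):k]$ to be a power of $p$. By Schilling's theorem, $m_k(\chi_1) = 1$ when $p$ is odd and $m_k(\chi_1) \in \{1,2\}$ when $p = 2$, so $m_k(\chi_1)$ is also a power of $p$. Corollary~\ref{cor.main2}(b) therefore yields $\ed_p(\chi) = \ed(\chi) = rm(m_k(\chi_1) - m)$.

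For (a), $p$ odd gives $m_k(\chi_1) = 1$, which forces $m = 1$ by the divisibility condition in Lemma~\ref{lem.irreducible}, so the formula returns $0$. For (b), a short case analysis is all that is needed: $m_k(\chi_1) = 1$ forces $\ed(\chi) = 0$; if $m_k(\chi_1) = 2$, then either $m = 2$ (again $\ed(\chi) = 0$) or $m = 1$ and $\ed(\chi) = r$, which is $2^l$ for some $l \geqslant 0$.

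For (c) I would take $G_l := Q_{2^{l+3}}$, the generalized quaternion group of order $2^{l+3}$, over $k = \bbQ$, and let $\chi_1$ be the character of its faithful $2$-dimensional absolutely irreducible representation, obtained by sending the generator $a$ of the cyclic subgroup $\langle a\rangle$ of order $2^{l+2}$ to $\diag(\zeta_{2^{l+2}}, \zeta_{2^{l+2}}^{-1})$. The character values then lie in the totally real field $\bbQ(\zeta_{2^{l+2}} + \zeta_{2^{l+2}}^{-1})$ of degree $2^l$ over $\bbQ$, so $r = 2^l$. To verify $m_{\bbQ}(\chi_1) = 2$ I would complete at a real place of $\bbQ(\chi_1)$: the envelope $\Env_{\bbR}(\chi_1)$ is a $2$-dimensional central simple $\bbR$-algebra, hence $\Mat_2(\bbR)$ or $\bbH$; the former is ruled out because $Q_{2^{l+3}}$ has no faithful real representation of dimension $2$ (its image in $\GL_2(\bbR)$ would be a finite subgroup of $\OO(2)$, hence cyclic or dihedral, contradicting the presence of anticommuting elements of order $4$). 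Thus $\Env_{\bbR}(\chi_1) \simeq \bbH$ and the Schur index equals $2$. Setting $\chi := \chi_1 + \dots + \chi_{2^l}$ (the sum over the $\Gal(\bbQ(\chi_1)/\bbQ)$-orbit, i.e.\ taking $m = 1$), Lemma~\ref{lem.irreducible} shows that $\chi$ is irreducible over $\bbQ$, and the formula from part (b) gives $\ed(\chi) = 2^l$.

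The main obstacle is the Schur-index computation in (c); the real-place argument above is elementary but does rely on the classical fact that generalized quaternion groups admit no faithful $2$-dimensional real representations. Everything else is essentially bookkeeping against Corollary~\ref{cor.main2}(b).
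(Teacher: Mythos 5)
Your proposal is correct and follows essentially the same route as the paper: parts (a) and (b) are deduced from Schilling's theorem together with Corollary~\ref{cor.main2}(b) (the paper dispatches the case $m=m_k(\chi_1)$ via Corollary~\ref{cor.upper}, which is only a cosmetic difference), and part (c) uses the same generalized quaternion groups of order $2^{l+3}$ with character field $\bbQ(\zeta_{2^{l+2}}+\zeta_{2^{l+2}}^{-1})$. The only divergence is in verifying $m_{\bbQ}(\chi_1)=2$: the paper exhibits the envelope as the explicit quaternion algebra $((\zeta_s-\zeta_s^{-1})^2,-1)$ and notes it is nonsplit over $\bbR$, while you argue from the absence of a faithful $2$-dimensional real representation of the group; both arguments amount to ramification at the real place and are equally valid.
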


\begin{proof}
Write $\chi = m(\chi_1 + \dots + \chi_r)$, where
$\chi_i \colon G \to \overline{k}$ are absolutely
irreducible characters and $m$ divides $m_k(\chi_1)$.
If $m = m_k(\chi_1)$ then $\ed(\chi) = 0$ by
Corollary~\ref{cor.upper}.

(a) In particular, this will always be the case if $p$ is odd.
Indeed, by Schilling's theorem, $m_k(\chi_1) = 1$ and thus $m = 1$.
(Also cf. Lemma~\ref{lem.ed=0}.)

\smallskip
(b) By Schilling's theorem, $m_k(\chi) = 1$ or $2$, and by
the above argument, we may assume that $m < m_k(\chi_1)$.
Thus the only case we need to consider is $m_k(\chi_1) = 2$
and $m = 1$.  By Theorem~\ref{thm.schur}(b), $r = [k(\chi_1):k]$. Since
$k(\chi_1) \subset k(\zeta_e)$, where the exponent $e$ of $G$ is
a power of $2$, we see that $r$ divides $[k(\zeta_e): k]$, which is,
once again, a power of $2$. Thus we conclude that $r$ is a power of $2$.
Corollary~\ref{cor.main2}(b) now tells us that
\begin{equation} \label{e.schilling}
\ed_2(\chi) = \ed(\chi) = r m (m_k(\chi) - m) =
r \cdot 1 \cdot (2-1) = r
\end{equation}
is a power of $2$, as claimed.

\smallskip
(c) Let $s = 2^{l+2}$, and $\sigma \in \Gal(\bbQ(\zeta_s)/\bbQ)$
be complex conjugation, and
\[ F := \bbQ(\zeta_s)^{\sigma} = \bbQ(\zeta_s) \cap \bbR =
\bbQ(\zeta_s + \zeta_s^{-1}) \, . \]
Consider the quaternion algebra $A = ((\zeta_s - \zeta_s^{-1})^2, -1)$
over $F$, i.e., the $F$-algebra generated by
elements $x$ and $y$, subject to the relations
\[ \text{$x^2 = (\zeta_{s} - \zeta_{s}^{-1})^2$, $y^2 = -1$
and $x y = - yx$.} \]
Arguing as in the proof of Proposition~\ref{prop.brauer-ed}, we see that
$F(\zeta_{s} - \zeta_{s}^{-1}) = \bbQ(\zeta_{s})$
is a maximal subfield of $A$,
$\zeta_{s}$ and $y$ generate a
a multiplicative subgroup of $A$ of order $2s$, which spans
$A$ as an $F$-vector space,
and the inclusion $G \hookrightarrow A$ gives rise to
an absolutely irreducible
$2$-dimensional representation
\[ \rho_1 \colon G \hookrightarrow A \hookrightarrow \GL_2(F)
\, .  \]
Denote the character of $\rho_1$ by $\chi_1 \colon G \to F$.
Since $F$ is generated by $\chi_1(\zeta_s) = \zeta_s + \zeta_s^{-1}$
as a field extension of $\bbQ$, we have $\bbQ(\chi_1) = F$.
Thus $\chi_1$ has exactly
\[ r = [F : \bbQ] = \dfrac{1}{2} [\bbQ(\zeta_s): \bbQ] = 2^l \]
conjugates over $\bbQ$, and $\chi = \chi_1 + \dots + \chi_r$ is an
irreducible character over $\bbQ$.

Note that since $s = 2^{l+2} \geqslant 4$,
$(\zeta_{s} - \zeta_{s}^{-1})^2 < 0$,
$A \otimes_ F \bbR$ is $\bbR$-isomorphic to the Hamiltonian
quaternion algebra $\bbH = (-1, -1)$ and hence, is non-split.
Thus $\ind(A) = 2$. Since $A = \Env_{\bbQ}(\rho)$,
Theorem~\ref{thm.schur}(d) tells us that $m_{\bbQ}(\chi_1) = 2$.
Applying Corollary~\ref{cor.main2}(b), as in~\eqref{e.schilling},
we conclude that $\ed_2(\chi) = \ed(\chi) = r = 2^l$,
as desired.
\end{proof}

\section{Essential dimension of modular representations}
\label{sect.modular}

Let $G$ be a finite group and $\Rep_{G, k}$ be the functor of
representations defined at the beginning of Section~\ref{sect.def-ed}.
In the non-modular setting (where $\Char(k)$ does not divide $|G|$),
we know that
\[ \text{$\ed(\Rep_{G, k})$ is }
\begin{cases} \text{$0$, if $\Char(k) > 0$, by Remark~\ref{rem.char}, and} \\
\text{$\leqslant
|G|^2/4$, if $\Char(k) = 0$,
by Proposition~\ref{prop.crude-upper-bound}.}
\end{cases}
\]
We shall now see that essential dimension of representations
behaves very differently in the modular case.

\begin{prop} \label{prop.modular}
Let $k$ be a field of characteristic $p$. Suppose a finite group
$G$ contains an elementary abelian subgroup $E
= \langle g_1, g_2 \rangle \simeq (\bbZ / p \bbZ)^2$ of rank $2$.
Then $\ed(\Rep_{G, k}) = \infty$.
\end{prop}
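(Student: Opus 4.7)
The plan is to show $\ed(\Rep_{G,k}) = \infty$ by producing, for each $n \geq 1$, a representation $\tau_n$ of $G$ with $\ed(\tau_n) \geq n$. First, let $t_1, \dots, t_n$ be algebraically independent over $k$, set $K := k(t_1, \dots, t_n)$, and for $t \in K$ let $V_t$ be the $2$-dimensional $K[E]$-module with
\[ g_1 \mapsto \begin{pmatrix} 1 & 1 \\ 0 & 1 \end{pmatrix}, \qquad g_2 \mapsto \begin{pmatrix} 1 & t \\ 0 & 1 \end{pmatrix}. \]
These are the well-known $2$-dimensional indecomposable $E$-modules parametrized by $\bbP^1$. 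I would set $\rho_n := V_{t_1} \oplus \dots \oplus V_{t_n}$ and $\tau_n := \Ind_E^G \rho_n$.

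The lower bound $\ed(\tau_n) \geq n$ will come from an intrinsic invariant of $E$-modules. For $\alpha = (\alpha_1, \alpha_2) \in K^2 \setminus \{0\}$, let $X_\alpha := \alpha_1 (g_1 - 1) + \alpha_2 (g_2 - 1) \in K[E]$, a nilpotent element. For any $E$-module $M$ over $K$, set $r_M(\alpha) := \rk_K(X_\alpha|_M)$; since $r_M$ is homogeneous, it descends to a function on $\bbP^1_K$, and I would define the closed subscheme
\[ Y(M) := \{[\alpha] \in \bbP^1_K : r_M(\alpha) < \max_\beta r_M(\beta)\} \subseteq \bbP^1_K \]
(cut out by the appropriate minors of $X_\alpha$). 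Routine checks show: (i) $Y(M)$ is intrinsic to the $K[E]$-isomorphism class of $M$; (ii) $Y(M \oplus N) = Y(M) \cup Y(N)$; and (iii) if $M = M' \otimes_{K_0} K$ for some $M'$ defined over $K_0 \subseteq K$, then $Y(M)$ is the base change of $Y(M') \subseteq \bbP^1_{K_0}$.

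Direct computation yields $Y(V_t) = \{[-t:1]\}$, hence $Y(\rho_n) = \{[-t_i:1]\}_{i=1}^n$. By Mackey's formula,
\[ \Res_E^G \tau_n \;\cong\; \rho_n \;\oplus\; \bigoplus_g \Ind_{E \cap gEg^{-1}}^E \bigl({}^g\rho_n|_{E \cap gEg^{-1}}\bigr), \]
the sum running over non-identity double cosets $g \in E\backslash G/E$. A short case analysis on $H := E \cap gEg^{-1}$ gives: if $H = \{e\}$, the summand is $k[E]$-free and contributes nothing to $Y$; if $|H| = p$, the summand has $Y$ contained in the single $\bbF_p$-rational point $[H] \in \bbP^1(\bbF_p)$ corresponding to $H$; if $H = E$ (so $g \in N_G(E) \setminus E$), the summand is ${}^g\rho_n$, whose $Y$ is the image of $\{[-t_i:1]\}$ under the $\bbF_p$-linear projectivity of $\bbP^1$ induced by conjugation by $g$. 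In every case, $Y(\Res_E^G \tau_n)$ is a finite closed subscheme of $\bbP^1_K$ containing each $[-t_i:1]$.

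Now suppose $\tau_n$ descends to a subfield $K_0 \subseteq K$, so $\Res_E^G \tau_n$ also descends. By property (iii), $Y(\Res_E^G \tau_n) = Z \times_{K_0} K$ for some finite closed subscheme $Z \subset \bbP^1_{K_0}$. Every closed point of $Z$ has residue field finite (hence algebraic) over $K_0$, so every $K$-point of $Z \times_{K_0} K$ has coordinates algebraic over $K_0$; applying this to $[-t_i:1]$ forces each $t_i$ to be algebraic over $K_0$. Since $t_1, \dots, t_n$ are algebraically independent over $k \subseteq K_0$, this yields $\trdeg_k K_0 \geq n$ and hence $\ed(\tau_n) \geq n$, which lets $n$ grow and completes the proof. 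The principal technical obstacle is the case analysis of the Mackey summands in the third paragraph; the rest reduces to elementary linear algebra and standard descent.
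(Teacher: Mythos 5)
Your proposal is correct, but it takes a genuinely different route from the paper's. The paper also uses Carlson's rank variety of the restriction to $E$, but it treats it as a morphism of functors $\V_{E,k} \colon \Rep_{G,k} \to \Sub_{\bbP^1,k}$ and relies on two substantial inputs from modular representation theory: (i) surjectivity of $\V_{E,k}$ when $G = E$ (Benson, Cor.\ 5.9.2, i.e.\ Carlson's realization of arbitrary reduced subvarieties of $\bbP^1$ as rank varieties, implicitly via $L_\zeta$-modules); and (ii) the Quillen Stratification Theorem to reduce the general case to the case $G = E$, modulo a controlled "bad" locus $B \subsetneq \bbP^1$. You instead build the $G$-representation directly as $\Ind_E^G$ of a sum of explicit $2$-dimensional $V_{t_i}$'s and control the restriction by Mackey's formula, replacing Quillen stratification by an elementary double-coset analysis. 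This is simpler in spirit, but it forced you to replace Carlson's rank variety $V_E(M)$ (defined by $\operatorname{rank}X_\alpha < (p-1)\dim M/p$) with the generic-rank-drop variety $Y(M)$ (rank drops below the {\em maximal} rank over $\bbP^1$): for $p>2$ the $2$-dimensional modules $V_t$ are not generically free, so Carlson's $V_E(V_t) = \bbP^1$ would be useless, whereas your $Y(V_t) = \{[-t:1]\}$ still detects $t$. The "routine checks" you defer are real but doable: additivity of $Y$ under direct sums uses that the maximal-rank loci of the summands are dense open (so overlap over the infinite field $K$); the $|H|=p$ case of the Mackey analysis uses that tensoring any $K[H]$-module with a free $K[H']$-module ($E = H\times H'$) gives a module that is $X_\alpha$-free for all $\alpha$ outside the line of $H$; and descent of $Y$ along $K_0 \hookrightarrow K$ holds since the minors cutting out $Y$ have coefficients in $K_0$ and the generic rank is achieved over any infinite field. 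In short: you trade the two black boxes (Carlson realization and Quillen stratification) for a more explicit module-theoretic computation and a mildly nonstandard variant of the rank variety; both arguments are valid, and yours is arguably more self-contained.
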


\begin{proof} Let $\Sub_{\bbP^1, k} \colon \Fields_k \to \Sets$
be a covariant functor, given by
\[ \text{$\Sub_{\bbP^1, k} (K) := \{$closed subvarieties
of $\bbP^{1}_K\}$.} \]
Here subvarieties of $\bbP^{1}_K$ are required to be reduced
but not necessarily irreducible.  Closed subvarieties
$X, Y \subset \bbP^{1}_K$ represent the same element in $\Sub_{\bbP^1, k}(K)$
if $X(\overline{K}) = Y(\overline{K})$ in $\bbP^{1}(\overline{K})$.
We will now consider the morphism of functors
\[ \V_{E, k} \colon \Rep_{G, k} \to \Sub_{, \bbP^1, k} \]
which associates to a representation $\rho \colon G \to \GL_n(K)$,
the rank variety $V_E(\rho)$, as defined by J.~Carlson~\cite{carlson}.
Recall that in projective coordinates $(x_1: x_2)$ on $\bbP^{1}_K$,
the rank variety $V_E(\rho)$ is given by
\begin{equation} \label{e.rank}
\rank(A_{x_1, x_2}) < \frac{(p-1) n } {p} \, ,
\end{equation}
where $A = x_1 (\rho(g_1) - I_n) + x_2 (\rho(g_2) - I_n)
\in \Mat_n(K[x_1, x_2])$, and $I_n$ is the $n \times n$ identity
matrix. In other words, condition~\eqref{e.rank} is equivalent to
the vanishing of certain minors of $A_{x_1, x_2}$. These
minors are homogeneous polynomials in $K[x_1, x_2]$,
and $V_E(\rho)$ is the reduced subvariety of $\bbP^{1}_K$
they cut out.  Note that the generators $g_1, g_2$
of $E$ are assumed to be fixed throughout.  For details
on this construction, see~\cite[Section 4]{carlson}
or~\cite[Section 5.8]{benson2}.

\smallskip
To make the rest of the proof more transparent, we will first consider
the case, where $G = E$. In this case it follows from the work
of Carlson that the functor $\V_{E, k}$ is surjective. That is,
for any given field $K/k$, every reduced subvariety
$X \subset \bbP^{1}_K$ can be realized as the rank variety
of a suitable representation $\rho \colon E \to \GL_n(K)$;
see~\cite[Corollary 5.9.2]{benson2}. Thus
$\ed(\Rep_{G, k}) \geqslant \ed(\Sub_{\bbP^1, k})$;
see~\cite[Lemma 1.9]{berhuy-favi}.  It now suffices
to show that  $\ed(\Sub_{\bbP^1, k}) = \infty$.  Let
$L/k$ be a field, $a_1, \dots, a_n \in L$, and
$X[n]$ be the union of the points
\begin{equation} \label{e.zero-cycle}
X_1 = (1: a_1), \dots, X_n = (1: a_n)
\end{equation}
in $\bbP^1$. We view $X[n]$ as an element of $\Sub_{\bbP^1, k}(L)$.

\smallskip
Claim: Suppose $X[n]$ descends to a subfield $K \subset L$.
Then $a_{i}$ is algebraic over $K$ for every $i = 1, \dots, n$.

\smallskip
By the definition of the functor $\Sub_{\bbP^1, k}$, $X[n]$ descends to $K$ if
$X[n]$ can be cut out (set-theoretically) by
homogeneous polynomials $f_1, \dots, f_s \in K[x_1, x_2]$.
In other words, the points $X_1, \dots, X_n$ in~\eqref{e.zero-cycle}
are the only
non-zero solutions, in the algebraic closure $\overline{L}$,
of a system of homogeneous equations
\[ f_1(x_1, x_2) = \dots = f_s(x_1, x_2) = 0 \]
with coefficients in $K$. Since every solution of such
a system can be found over $\overline{K}$,
we have $a_1, \dots, a_n \in \overline{K}$. This proves the claim.

Taking $a_1, \dots, a_n$ to be independent variables and
$L := k(a_1, \dots, a_n)$, we see
that $\trdeg_k(K) = \trdeg_k(L) = n$ and thus in this
case $\ed(X[n]) = n$.  Therefore,
\[ \ed(\Sub_{\bbP^1, k}) \geqslant \sup_{n \geqslant 1} \ed(X[n]) = \infty \, . \]
This completes the proof of the proposition in the case
where $G = E$.

\smallskip
We now proceed with the proof of Proposition~\ref{prop.modular}
for a general group $G$. Denote the centralizer and the normalizer
of $E$ in $G$ by $C_G(E)$ and $N_G(E)$, respectively. Then
$W_G(E) := N_G(E)/C_G(E)$ acts on $\bbP_k^{1}$.
By the Quillen Stratification Theorem~\cite[Theorem 5.6.3]{benson2}
and \cite[Corollary 5.9.2]{benson2}, there exists
a closed $W_G(E)$-invariant $k$-subvariety $B \subsetneq \bbP_k^{1}$
with the following property.
Every $X \in \Sub_{\bbP^1, k}(L)$ satisfying conditions

\smallskip
(i) $X$ is $W_{G}(E)$-invariant, and

\smallskip
(ii) no irreducible component of $X$ is contained in $B_K$,

\smallskip
\noindent
lies in the image of $\V_{E, k}(L)$.
(The subvariety $B \subset \bbP^{1}_k$ comes from the cohomology
of elementary abelian subgroups of $G$ strictly contained in $E$; see
the bottom of~\cite[p. 178]{benson2}.)
Consequently, for any $X \in \Sub_{\bbP^1, k}(L)$ satisfying (i) and (ii),
$\ed(\Rep_{G, k}) \geqslant \ed(X)$.

In particular, let $L := k(a_1, \dots, a_n)$, where
$a_1, \dots, a_n$ are independent variables over $k$,
$X_1, \dots, X_n \subset \bbP^1_L$ be as in~\eqref{e.zero-cycle},
and \[ Z[n] := \bigcup \, w(X_i) \, , \]
where the union is taken over all
$i = 1, \dots, n$ and all $w \in W_G(E)$. Note that every point
of the form $w(X_i)$ corresponds to a dominant $k$-morphism
$\Spec(L) \to \bbP_k^{1}$ and hence, cannot lie in $B_K$ for
any proper subvariety $B \subset \bbP_k^{1}$.  Thus $Z[n]$ satisfies
conditions (i) and (ii). We conclude that
$Z[n]$ lies in the image of $\V_{E, k}$, and thus
\begin{equation} \label{e.edZ[n]}
\ed(\Rep_{G, k}) \geqslant \ed(Z[n])
\end{equation}
for every $n \geqslant 1$.  On the other hand,
the Claim above shows that if $Z[n]$ descends to an intermediate field
$k \subset K \subset L$ then every $a_i$ is algebraic over $K$.
Hence, $\ed(Z[n]) = \trdeg_k(L) = n$, and~\eqref{e.edZ[n]}
tells us that $\ed(\Rep_{G, k}) = \infty$, as desired.
\end{proof}

\bigskip
\noindent
{\sc Acknowledgements.}
The authors are grateful to Alexander Merkurjev for contributing
Example~\ref{example1} and to Patrick Brosnan, Jon Carlson,
Jerome Lefebvre, Julia Pevtsova, and Lior Silberman for stimulating
discussions.


\def\cprime{$'$}

\end{document}